\title{The degree of irrationality of hypersurfaces in various Fano varieties\vspace{-2ex}}
\author{David Stapleton and Brooke Ullery\vspace{-2ex}}
\newtheorem{theorem}{Theorem}
\numberwithin{theorem}{section}
\newtheorem{lemma}[theorem]{Lemma}
\newtheorem{proposition}[theorem]{Proposition}
\newcommand{\hr}[2]{\hyperref[#1]{#2}}
\newtheorem*{thmA}{Theorem A}
\newtheorem*{thmB}{Theorem B}
\newtheorem*{thmC}{Theorem C}
\newtheorem*{thmD}{Theorem D}
\newtheorem*{thmE}{Theorem E}
\theoremstyle{definition}
\newtheorem{remark}[theorem]{Remark}
\newtheorem{question}[theorem]{Question}
\newtheorem{problem}[theorem]{Problem}
\newtheorem{example}[theorem]{Example}
\newtheorem{definition}[theorem]{Definition}
\def\ZZ{{\mathbb Z}}
\def\CC{{\mathbb C}}
\def\Quad{{\mathbf{Q}}}
\def\Grass{{\mathbf{G}}}
\def\PP{{\mathbf{P}}}
\def\Oc{{\mathcal O}}
\def\Sc{{\mathcal S}}
\def\Grkm{{\mathrm{Gr}(k,m)}}
\def\Flkm{{\mathrm{Fl}(k-1,k,m-1)}}
\def\Flkkm{{\mathrm{Fl}(k-1,k+1,m)}}
\def\Grtq{{\Fano(\Quad)}}
\def\min{{\mathrm{min}}}
\def\max{{\mathrm{max}}}
\def\Fano{{\mathrm{Fano}}}
\def\Alb{{\mathrm{Alb}}}
\def\ord{{\mathrm{ord}}}
\def\dra{{\dashrightarrow}}
\def\ra{{\rightarrow}}
\def\cl{{\colon}}
\def\deg{{\mathrm{deg}}}
\def\irr{{\mathrm{irr}}}
\def\gon{{\mathrm{gon}}}
\def\min{{\mathrm{min}}}
\def\id{{\mathrm{id}}}
\def\bva{{\mathrm{(BVA)}_p}}
\def\BVA{{\mathrm{(BVA)}}}
\def\Jac{{\mathrm{Jac}}}
\def\bir{{\simeq_{\mathrm{bir}}}}
\def\dim{{\mathrm{dim}}}
\def\Bs{{\mathrm{Bs}}}
\def\inc{{\mathrm{Inc}}}
\def\Sym{{\mathrm{Sym}}}
\def\cb{{\mathrm{CB}}}
\def\Oc{{\mathcal O}}
\begin{document}
\maketitle

\section*{Introduction}
\thispagestyle{empty}

The \textit{degree of irrationality} of an $n$-dimensional algebraic variety $X$, denoted $\irr(X)$, is the minimal degree of a dominant rational map
$$
\phi\cl X \dra \PP^n.
$$
The aim of this paper is to compute the degree of irrationality of hypersurfaces in various Fano varieties: quadrics, cubic threefolds, cubic fourfolds, complete intersection threefolds of type (2,2), products of projective spaces, and Grassmannians. Throughout we work with varieties over $\CC$.

Recently there has been a great deal of interest in understanding different measures of irrationality of higher dimensional varieties. Bastianelli, Cortini, and De Poi conjectured  (\cite[Conj. 1.5]{BastCortDe}) that if $X$ is a very general $d$ hypersurface
$$
X=X_d \subset \PP^{n+1}
$$
with $d\ge 2n+1$, then $\irr(X) =d-1$. They proved their conjecture in the case $X$ is a surface or threefold. This conjecture was proved in full by Bastianelli, De Poi, Ein, Lazarsfeld, and the second author in (\cite{ELU}). Gounelas and Kouvidakis (\cite{GounKouv}) computed the covering gonality and the degree of irrationality of the Fano surface of a generic cubic threefold. Bastianelli, Ciliberto, Flamini, and Supino (\cite{BCFS}) computed the covering gonality of a  very general hypersurface in $\PP^{n+1}$. Recently, Voisin (\cite{VoisinCovgon}) proved that the covering gonality of a very general $n$-dimensional abelian variety goes to infinity with $n$.

In this paper we show that the ideas in the proof of ~\cite[Thm. C]{ELU} can be extended to compute the degree of irrationality of hypersurfaces in many Fano varieties. For example, let $\Quad\subset \PP^{n+2}$ be a smooth quadric in projective space.

\begin{thmA}
Let
$$
X = X_d \subset \Quad \subset \PP^{n+2}
$$
be a very general hypersurface in $\Quad$ with $X \in |\Oc_\Quad(d)|$. If $d\ge 2n$ then $\irr(X)=d.$
\end{thmA}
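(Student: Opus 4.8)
The plan is to prove $\irr(X)\le d$ and $\irr(X)\ge d$ separately, the second being the substantial half. Throughout write $\dim\Quad=n+1$, so $\dim X=n$, and let $\Oc_X(1)$ be the restriction of the hyperplane class from $\PP^{n+2}$; by adjunction $K_X=(K_\Quad+\Oc_\Quad(d))|_X=\Oc_X(d-n-1)$, which is very ample since $d\ge 2n$. (I set aside $n=1$, where the assertion is that a very general smooth curve of bidegree $(d,d)$ on $\PP^1\times\PP^1$ has gonality $d$, a classical fact.) For the upper bound, fix a general line $\ell\subset\Quad$: the planes $P\supset\ell$ form a $\PP^n$, and $P\cap\Quad=\ell\cup\ell'_P$ for a residual line $\ell'_P$. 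The rational map $X\dra\PP^n$, $x\mapsto\langle\ell,x\rangle$, is dominant, and over a general point its fibre is $\ell'_P\cap X$, a set of $\Oc_\Quad(d)\cdot\ell'_P=d$ points; hence $\irr(X)\le d$.

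For the lower bound, suppose toward a contradiction that $\phi\cl X\dra\PP^n$ is dominant of degree $\delta\le d-1$. After resolving $\phi$ and using generic smoothness, the general fibre $Z_p$ ($p\in\PP^n$ general) is a set of $\delta$ distinct points, and as $p$ varies the $Z_p$ sweep out a dense open subset of $X$. The first ingredient — the Cayley--Bacharach mechanism underlying \cite{ELU} — is that $Z_p$ satisfies the Cayley--Bacharach condition with respect to the canonical system $|K_X|=|\Oc_X(d-n-1)|$. Consequently $Z_p$, regarded as a subscheme of $\PP^{n+2}$, satisfies Cayley--Bacharach with respect to the system of forms of degree $d-n-1$: a form vanishing on all but one point of $Z_p$ restricts to a section of $\Oc_X(d-n-1)$ vanishing there, hence vanishes at the remaining point as well.

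Now I would invoke the elementary lemma that a reduced finite subscheme of projective space of cardinality at most $2m+1$ that satisfies Cayley--Bacharach with respect to $|\Oc(m)|$ lies on a line. The hypothesis $d\ge 2n$ is exactly what gives $\delta\le d-1\le 2(d-n-1)+1$, so $Z_p$ lies on a line $\ell_p$. The cases $\delta=1$ (which would make $X$ rational, impossible as $K_X$ is ample) and $\delta=2$ (which would force $|K_X|$ not to separate two points, impossible as $K_X$ is very ample) are excluded outright, so $\delta\ge 3$; then $\ell_p$ contains at least three points of $X\subset\Quad$, hence $\ell_p\subset\Quad$. Moreover $\ell_p\not\subset X$: for $d\ge 2n-1$ a very general $X$ contains no line, since the incidence variety of pairs (line on $\Quad$, member of $|\Oc_\Quad(d)|$ containing it) has dimension $(2n-1)+\dim|\Oc_\Quad(d)|-(d+1)<\dim|\Oc_\Quad(d)|$ and so does not dominate the linear system. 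Therefore $\ell_p\cap X$ consists of exactly $d$ points, of which $Z_p$ omits $d-\delta\ge 1$.

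Eliminating this configuration is the main obstacle, and here I would follow the more delicate part of the \cite{ELU}-type argument. The lines $\ell_p$ form a family $B$ of lines on $\Quad$, which must be $n$-dimensional (a lower-dimensional family could not cover the $n$-dimensional $X$ unless its members lay in $X$), and $\PP^n\dra B$ is generically finite. Thus $\phi$ furnishes, over a dense open subset, an algebraic rule selecting $\delta$ of the $d$ points of a general line-section $\ell_p\cap X$ of $X$, leaving a nonempty complement of size $d-\delta$. The contradiction is obtained by a monodromy/irreducibility argument: spreading $\phi$ out over the linear system $|\Oc_\Quad(d)|$ and forming the universal incidence $\{(X',\ell,x):\ell\text{ a line on }\Quad,\ x\in\ell\cap X'\}$, one checks this incidence is irreducible — it fibers over the irreducible Fano variety of lines of $\Quad$ with irreducible fibres, each a family of hyperplanes in $|\Oc_\Quad(d)|$ parametrized by the points of a line — so its monodromy acts transitively (indeed, with a little more care, as the full symmetric group) on the $d$ points of $\ell\cap X'$; a transitive action admits no invariant partition into nonempty parts, contradicting the selection rule above. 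The genuinely delicate point — and where the real work lies — is verifying that the sub-family of lines actually arising from $\phi$ is rich enough to witness this full monodromy, equivalently that the corresponding degree-$d$ cover of $B$ is irreducible; this is precisely where very-generality of $X$, together with the constraint $d\ge 2n$ controlling the dimension of the Fano variety of lines of $\Quad$ and forcing the relevant loci to have expected dimension, enters essentially. Granting this, $\delta\le d-1$ is impossible, so $\irr(X)\ge d$ and the theorem follows.
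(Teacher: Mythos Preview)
Your upper bound and the reduction ``a general fibre lies on a line contained in $\Quad$'' match the paper. The divergence, and the gap, is in how you extract a contradiction from the resulting family of lines $B$.

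You propose a monodromy argument: the universal incidence $\{(X',\ell,x):x\in\ell\cap X'\}$ is irreducible over the full base $\{(X',\ell):\ell\subset\Quad\}$, so the monodromy on the $d$ points of $\ell\cap X'$ is transitive and cannot admit an invariant selection of $\delta<d$ points. The problem is that your selection rule is only defined over the subvariety $\{(X',b):b\in B_{X'}\}$, not over the whole base, and restricting an irreducible cover to a subvariety can and here \emph{does} shrink the monodromy. Indeed, by construction the degree-$d$ cover over $B$ already splits into the degree-$\delta$ piece (the graph of $\phi$) and its residual, so its monodromy over $B$ is visibly intransitive. Thus your ``genuinely delicate point'' --- that $B$ is rich enough to witness the full monodromy --- is not a technicality to be verified but is equivalent to the statement being proved, and nothing in your sketch addresses it. Very-generality of $X$ does not force $B$ into general position in the Fano variety of lines: $B$ is built from the adversarial map $\phi$, and there is no a priori control on where it sits.

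The paper's argument is entirely different and does not use monodromy. After forming the $\PP^1$-bundle $\psi\cl F\to B$ with evaluation $\pi\cl F\to\Quad$, one first shows that $\pi$ is \emph{birational} (a ``congruence of order one''). Writing $\pi^*X=X'+\sum a_iE_i$ with the $E_i$ $\pi$-exceptional, the hypothesis $\delta<d$ forces some $E=E_i$ with $0<c:=E\cdot[\ell]\le n-1$. Two numerical inequalities are then extracted. First, $\pi(E)\subset X$ has dimension $e\ge 1$ (since every point of $\Quad$ lies on a line meeting $\pi(E)$) and covering gonality $\le c$, and a bound on the covering gonality of subvarieties of a very general complete intersection yields $c\ge e+d-2n+1$. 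Second, the contribution of $E$ to $K_{F/\Quad}$ satisfies $\ord_E(K_{F/\Quad})\ge n-e$, while $K_{F/\Quad}\cdot[\ell]=n-1$, giving $(n-e)c\le n-1$. Combining these two inequalities contradicts $d\ge 2n$. This is where very-generality actually enters --- through the covering-gonality bound for subvarieties of $X$ --- not through any monodromy statement.
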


We have other results for hypersurfaces in cubic threefolds and cubic fourfolds.

\begin{thmB}
Let
$$
X = X_d \subset Z \subset \PP^{n+2}
$$
be a smooth complete intersection of type $(3,d)$ in a smooth cubic hypersurface.

\begin{enumerate}
\item If $n= 2$ and $d\ge 8$ then
$$
\irr(X)= 
\left\{\begin{array}{l}
2d-2 \text{ if $X$ contains a line,}\\
2d \text{ otherwise,} \end{array}\right.
$$
 and any rational map $X \dashrightarrow \PP^2$ with degree equal to $\irr(X)$ is birationally equivalent to projection from a line in $Z$.

\item If $n =3$, $d\ge 13$ and $X$ is very general in $|\Oc_Z(d)|$, then $\irr(X) = 2d$.
\end{enumerate}
\end{thmB}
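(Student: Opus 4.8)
Throughout put $N=n+2$. By adjunction $\omega_X=\Oc_X(d-n)$; since $X$ is a smooth complete intersection the restriction $H^0(\PP^{N},\Oc(d-n))\to H^0(X,\omega_X)$ is onto, and $\omega_X$ is ample (as $d>n$), so $X$ is of general type. For the \emph{upper bounds} I would pick a line $\ell\subset Z$ — the Fano variety of lines of the smooth cubic $(n{+}1)$-fold $Z$ has dimension $2n-1\ge 3$ — and let $\pi_\ell\colon\PP^{N}\dashrightarrow\PP^n$ be projection from $\ell$. One checks $\pi_\ell|_X$ is dominant: for general $\Pi\supset\ell$ we have $Z\cap\Pi=\ell\cup C_\Pi$ with $C_\Pi$ a smooth conic, so $X\cap\Pi=(\ell\cap X_d)\cup(C_\Pi\cap X_d)$ and, after blowing up the base locus $X\cap\ell$, the general fibre is $(C_\Pi\cap X_d)\setminus\ell$. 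If $\ell\not\subset X$ this is $2d$ points, so $\irr(X)\le 2d$; if $\ell\subset X$ then $\ell\subset X_d$, the two points of $C_\Pi\cap\ell$ lie on $X_d$ and drop out, so the general fibre is $2d-2$ points and $\irr(X)\le 2d-2$.

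For the \emph{lower bounds}, let $\phi\colon X\dashrightarrow\PP^n$ be dominant of degree $\delta$; after resolving, a general fibre is a reduced set $D$ of $\delta$ points. By the method of \cite{BastCortDe,ELU}, $D$ satisfies the Cayley–Bacharach property with respect to $|\omega_X|$, hence — by the surjectivity above — as a subscheme of $\PP^{N}$ it satisfies $\mathrm{CB}(d-n)$. I would then invoke the structural description of short Cayley–Bacharach schemes: there is an explicit bound on $\delta$, valid whenever $\delta\le 2d-1$ once $d\ge 5n-2$ (i.e. $d\ge 8$ for $n=2$, $d\ge 13$ for $n=3$), below which a reduced $\mathrm{CB}(d-n)$ scheme in $\PP^N$ lies on a line or on a (possibly reducible) conic. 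Since $D$ is a general fibre, one of these alternatives holds for all fibres $D_q$, $q\in\PP^n$.

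In the \emph{line case}, $D_q\subset L_q$ with $L_q$ a line; as $\delta>3$ the cubic contains $L_q$, and $L_q\not\subset X$ (else $\phi$ would map the curve $L_q$ onto $\PP^n$), so $D_q\subseteq L_q\cap X_d$, forcing $\delta\le d$, while Cayley–Bacharach on $L_q$ gives $\delta\ge(d-n)+2$; the ``residual-point'' rational maps $\PP^n\dashrightarrow X$ built from $L_q\cap X_d\setminus D_q$ cannot be dominant as $X$ is of general type, and tracking the resulting collapse against the disjointness of the $D_q$ and the geometry of the Fano variety of lines of $Z$ rules this case out (along the lines of \cite{ELU}). In the \emph{conic case}, $D_q\subset C_q$ with $C_q$ a conic spanning a plane $\Pi_q$; restricting the Cayley–Bacharach property to $C_q\cong\PP^1$ via the degree-$2$ embedding shows $D_q$ is $\mathrm{CB}\big(2(d-n)\big)$ there, so $\delta\ge 2(d-n)+2$. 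In particular $C_q\subset Z$ (the degree-$6$ form $Z|_{C_q}$ vanishes at $\delta>6$ points), whence $Z\cap\Pi_q=C_q\cup\ell_q$ for a line $\ell_q\subset Z$ and $D_q\subseteq C_q\cap X_d$, giving $\delta\le 2d$. So the general fibres sweep out an $n$-dimensional family of conics on $Z$, each residual to a line of $Z$, compatibly with $\phi$. The crux — the new geometric content relative to \cite{ELU} — is a rigidity statement: using that distinct fibres $D_q$ are disjoint, the non-uniruledness of $X$, and the classification of conic-bundle structures on the cubic $Z$ (an $n$-dimensional covering family of pairwise disjoint conics on $Z$ compatible with a dominant map to $\PP^n$ is the linear system of plane sections residual to a single line $\ell\subset Z$), one shows $\phi$ is birationally equivalent to $\pi_\ell$. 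The degree computation above then forces $\delta=2d$ if $\ell\not\subset X$ and $\delta=2d-2$ if $\ell\subset X$. Hence: if $X$ contains no line — in particular for very general $X$ when $n=3$ — then $\ell\not\subset X$ and $\irr(X)=2d$; if $X$ contains a line then projection from it realises $2d-2$, no smaller degree occurs, and every minimal map is birational to a projection from a line of $Z$, which must lie in $X$ — giving the classification in (1).

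The \textbf{main obstacle}s are: (a) proving the Cayley–Bacharach structural statement in the precise numerical range, which is exactly what produces the hypotheses $d\ge 8$ (resp. $d\ge 13$) and, for $n=3$, is most cleanly obtained under a very-general assumption that simultaneously rules out lines and other special subvarieties of $X$; and (b) the rigidity step in the conic case. For (b) I would work with the incidence correspondence between $\PP^n$ and the Fano variety of lines of $Z$, using ampleness of $\omega_X$ (hence non-uniruledness of $X$) to discard the loci where the residual line $\ell_q$ varies, together with the classical description of conics and conic-bundle structures on a smooth cubic.
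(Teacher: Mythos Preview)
Your architecture matches the paper's --- upper bound via projection from a line in $Z$, Cayley--Bacharach forcing general fibres onto degree-$\le 2$ curves in $Z$ (the paper's Theorem~1.9), then case analysis --- but the ``rigidity step'' is where the content lies, and here your proposal diverges from what the paper actually does.

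For $n=2$ your outline is salvageable but missing the specific inputs. The mechanism is not a general ``classification of conic-bundle structures'' nor anything ``along the lines of \cite{ELU}'': it is the Clemens--Griffiths fact that the Fano surface $\Fano(Z)$ embeds in its Albanese, an abelian fivefold. Hence any rational map $\PP^2\dashrightarrow\Fano(Z)$ is constant; this kills the single-line case outright and, via the residual-line assignment $y\mapsto\ell_y$, forces all fibre-conics to be residual to one fixed line, giving the classification. The union-of-two-lines case needs still more: one passes to $\Sym^2(\Fano(Z))\hookrightarrow\Sym^2(\Alb)$, lands in a fibre of the addition map (a Kummer variety), and invokes Pirola's theorem that rational curves on Kummer varieties are rigid.

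For $n=3$ there is a genuine gap. Here $\Fano(Z)$ is a hyperk\"ahler fourfold (Beauville--Donagi); images of rational varieties must be isotropic, hence of dimension $\le 2$, but $\Fano(Z)$ \emph{can} contain rational surfaces, so the residual-line map $\PP^3\dashrightarrow\Fano(Z)$ need not be constant and your rigidity claim has no force. The paper does \emph{not} prove rigidity for $n=3$. Instead, assuming $\delta<2d$, it forms the residual $0$-cycle $\zeta_y=[C_y\cap X]-[\xi_y]$ of degree $2d-\delta\le 4$, argues that $\zeta_y$ is nonconstant (with separate arguments for the two-line and smooth-conic configurations, the latter using the Lagrangian constraint), and then, sweeping over a general line $\lambda\subset\PP^3$, produces a curve $E\subset X$ with $\gon(E)\le 4$. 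The contradiction comes from \cite[Prop.~3.8]{ELU}: for $X$ very general, every curve on $X$ has gonality $\ge d-7\ge 6$. This is where the very-general hypothesis is actually used --- not, as you suggest, merely to exclude lines on $X$, but to bound from below the gonality of \emph{all} curves on $X$.
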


Now let $Z = Z_{(2,2)} \subset \PP^5$ be a smooth complete intersection of two quadrics.
\begin{thmC}
Let
$$
X=X_d\subset Z
$$
be a smooth surface in $Z$ with $X\in |\Oc_Z(d)|$. If $d\ge 8$ then
$$\begin{array}{c}
\irr(X)= 
\left\{\begin{array}{l}
2d-2 \text{ if $X$ contains a plane conic,}\\
2d-1 \text{ if $X$ contains a line and no conics,}\\
2d \text{ otherwise.}\end{array}\right.
\end{array}$$
Moreover, any rational map $X\dra \PP^n$ with degree equal to $\irr(X)$ is birationally equivalent to the projection from a plane contained in a quadric in the linear series $|I_Z(2)|$.
\end{thmC}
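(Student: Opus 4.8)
The plan is to adapt the proof of \cite[Thm.~C]{ELU}. Observe that $X$ is a smooth complete intersection of type $(2,2,d)$ in $\PP^5$, so $\deg_{\PP^5}X=4d$ and, by adjunction, $\omega_X\cong\Oc_X(d-2)$; in particular $X$ is canonically polarized, hence of general type, and (being a complete intersection) arithmetically Cohen--Macaulay. Also $Z$ is a del Pezzo threefold of degree $4$ with $\Pic(Z)=\ZZ\cdot\Oc_Z(1)$, so $Z$ contains no plane. The relevant special subvarieties of $Z$ are its lines (which move in a $2$-dimensional family $F(Z)$, in fact an abelian surface) and its plane conics; since $Z$ contains no plane, every conic $C\subset Z$ spans a plane $\Lambda=\langle C\rangle$, and restricting the pencil $|I_Z(2)|$ to $\Lambda$ shows $\Lambda$ lies in some member of $|I_Z(2)|$, while conversely the trace $\Lambda\cap Z$ of a plane $\Lambda$ contained in a member of $|I_Z(2)|$ but not in $Z$ is such a conic.

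\textbf{Upper bound.} Fix a plane $\Lambda$ lying in a quadric $Q\in|I_Z(2)|$ with $\Lambda\not\subset Z$, and set $C_\Lambda=\Lambda\cap Z$, a (possibly reducible) conic on $Z$. For general $t$ the $3$-plane $\PP^3_t$ through $\Lambda$ cuts $Z$ in $C_\Lambda$ together with a residual conic $C'_t\subset Z$, so the fibre of the linear projection $\pi_\Lambda|_X\colon X\dra\PP^2$ over $t$ is $(C'_t\cap X)\setminus\Lambda$. One checks $|C'_t\cap X|=2d$, and that the number of these $2d$ points lying on $\Lambda$ is $0$ if $C_\Lambda\not\subset X$, is $1$ if $C_\Lambda=L\cup L''$ has exactly one component a line on $X$, and is $2$ if $C_\Lambda\subset X$; hence $\deg(\pi_\Lambda|_X)$ is $2d$, $2d-1$ or $2d-2$ respectively. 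Taking $\Lambda$ general in $Q_1$, general in $Q_1$ through a line of $X$, or equal to $\langle C\rangle$ for a conic $C\subset X$ then yields $\irr(X)\le 2d$, $\le 2d-1$ and $\le 2d-2$ in the three cases.

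\textbf{Lower bound.} Let $\phi\colon X\dra\PP^2$ be dominant of degree $\delta=\irr(X)$, and let $\Gamma\subset X$ be a general fibre, a reduced set of $\delta$ points. As in \cite{ELU}, $\Gamma$ satisfies the Cayley--Bacharach condition with respect to $\omega_X=\Oc_X(d-2)$; since $X$ is arithmetically Cohen--Macaulay the restriction $H^0(\PP^5,\Oc(d-2))\to H^0(X,\Oc_X(d-2))$ is surjective, so $\Gamma$ satisfies Cayley--Bacharach with respect to $\Oc_{\PP^5}(d-2)$ and in particular fails to impose independent conditions on $|\Oc_{\PP^5}(d-2)|$. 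For $\delta\le 2d-1$ and $d\ge 8$ this is very restrictive: a liaison/residuation analysis, as in \cite[Thm.~C]{ELU}, forces $\Gamma$ to lie on a line or on a plane conic. The possibility that $\Gamma$ lies on a line is excluded by studying the resulting $2$-dimensional family of lines $\{\ell_t\}$ sweeping out $X$: if $\ell_t\subset Z$ then $|\ell_t\cap X|\le d$ and the parameter map $\PP^2\dra F(Z)$ must be constant (there is no nonconstant rational map from a rational variety to an abelian surface), which is absurd; and if $\ell_t\not\subset Z$ then a general $\ell_t$ would meet very many of the others (it meets $X$ in $4d$ points but only $\delta$ of them lie in $\Gamma_t$), incompatible with $X$ spanning $\PP^5$ and being of general type. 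Hence $\delta\ge 2d-2$ in all cases. If $\delta=2d-2$ or $2d-1$ then $\Gamma$ lies on a conic $C_t$, necessarily contained in $Z$ (as $\delta\ge 5$ and $\Gamma\subset X\subset Z$); $C_t$ is the unique conic through $\Gamma$, so we obtain a $2$-dimensional family $\{C_t\}_{t\in\PP^2}$ of conics on $Z$ covering $X$, and $\phi$ is birationally the composite of $x\mapsto t$ (``the parameter of the conic through $x$'') with a consistent rule discarding $2d-\delta$ of the $2d$ points of $C_t\cap X$. Such a rule is a rational multisection of degree $2d-\delta$ of the degree-$2d$ cover $\{C_t\cap X\}\to\PP^2$; a monodromy argument shows this cover is irreducible with full monodromy unless $C_t$ is split with a component on $X$ — so a degree-$1$ multisection exists iff $X$ contains a line, and a degree-$2$ one forces $X$ to contain a conic. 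Combining, $\irr(X)\ge 2d$ when $X$ contains no line and no conic, and $\irr(X)\ge 2d-1$ when $X$ contains a line but no conic, matching the upper bounds and establishing all three values.

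\textbf{Rigidity and the main obstacle.} For the ``Moreover'' statement, a degree-$\irr(X)$ map $\phi$ is, by the above, governed by a $2$-dimensional family $\{C_t\}$ of conics on $Z$ with $\Lambda_t=\langle C_t\rangle$ lying in a member $Q_t$ of $|I_Z(2)|$. One shows $Q_t$ is independent of $t$ (otherwise the planes $\Lambda_t$ would move through distinct rulings of distinct quadrics of the pencil, incompatible with exactly one conic of the family passing through a general point of $X$), and then that a $2$-dimensional family of planes inside a fixed ruling of a fixed quadric $Q\in|I_Z(2)|$ sweeping out a surface must be the family of planes residual to a fixed plane $\Lambda\subset Q$ along the $3$-planes containing $\Lambda$; this identifies $\phi$ with projection from $\Lambda$. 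I expect the main obstacles to be precisely (i) this rigidity argument and (ii) excluding the ``collinear fibre'' case and the spurious multisections for \emph{every} smooth $X$ rather than just very general ones; both rely on the fine geometry of $Z$ — its Fano surface of lines, the classification of conics on $Z$, the singular members of the pencil $|I_Z(2)|$, and the two $\PP^3$-families of planes on a smooth quadric fourfold — which is what must be substituted for the elementary geometry of lines in $\PP^{n+1}$ used in \cite[Thm.~C]{ELU}.
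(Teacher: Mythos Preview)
Your overall architecture --- upper bound via projection from a plane in a quadric of the pencil, lower bound via Cayley--Bacharach forcing fibres onto low-degree curves, then analysis of the resulting family of conics --- matches the paper. But there are two genuine gaps in the execution, and the endgame you propose is not the one the paper uses.

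First, the fibre geometry. The paper proves a new Cayley--Bacharach lemma (Theorem~1.9): a set of $r\le(5/2)m+1$ points satisfying $\cb(m)$ lies on a curve of degree~$\le 2$, meaning a line, a smooth plane conic, \emph{or a union of two possibly skew lines}. Your ``liaison/residuation'' gesture does not supply this, and you never treat the two-lines case. The paper excludes it (Lemma~4.6) by mapping $\PP^2\dra\Sym^2(\Fano(Z))\cong\Sym^2(\Jac(C_Z))$; the image must lie in a fibre of the addition map, which is a Kummer surface and hence not uniruled, so the image has dimension~$\le 1$ --- forcing $X$ to sit in a ruled surface, contradicting general type. (Your case ``$\ell_t\not\subset Z$'' is vacuous: any line meeting the $(2,2)$-intersection $Z$ in $\ge 3$ points lies in $Z$.)

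Second, and more seriously, your monodromy/multisection argument is not a proof. You implicitly assume that through a general $x\in X$ passes a \emph{unique} conic of the family (otherwise ``the parameter of the conic through $x$'' is ill-defined), and then assert that the degree-$2d$ cover $\{C_t\cap X\}\to\PP^2$ has full symmetric monodromy. Neither is justified, and the second is not obviously true for an arbitrary such $X$. The paper avoids monodromy entirely and instead proves directly that \emph{every} $\phi$ of degree $\le 2d$ is a projection from a plane, then reads off the three values from your upper-bound analysis (Remark~4.5). The route is: the map $\PP^2\dra\inc_Z$ composed with the Stein factorization $\inc_Z\to C_Z$ (the genus-$2$ curve you did not mention) is constant, so all planes lie in one fixed $Q_t$; since $\Fano(2,Q_t)\cong\PP^3$ (or $\PP^3\sqcup\PP^3$), one gets $f\colon\PP^2\dra B\subset\PP^3$, shown birational in Lemma~4.8. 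The heart of the proof is then an intersection-theoretic computation on the universal plane $G\to\PP^3$: writing $F\cdot\pi^{-1}(X)=[X']+\sum b_i[E_i]$ and pushing forward by $\psi$ yields $2d=\delta+\sum b_i\deg(\psi|_{E_i})$; any $E_i$ with $\pi_*[E_i]\ne 0$ would, via Proposition~1.7, force $\deg(\psi|_{E_i})\ge d$, contradicting $\delta\ge 2d-2$. Hence $\deg(\pi|_F)=\deg(B\subset\PP^3)=1$, so $B$ is a plane and $\phi$ is projection from the corresponding plane in $Q_t$ (Lemma~4.9). Replace your multisection sketch with this congruence-of-planes argument.
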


Furthermore, we compute the degree of irrationality of hypersurfaces in Grassmannians. Let
$$
\Grass = \Grkm\subset \PP
$$
be the Grassmannian of $k$ planes in an $m$ dimensional vector space embedded via its Pl\"ucker embedding. Assume $k\ne 1, n-1$ (the excluded cases are covered by ~\cite[Thm. C]{ELU}).

\begin{thmD}
Let
$$
X = X_d \subset \Grass
$$
be a very general hypersurface with $X\in |\Oc_\Grass(d)|$. If $d\ge 3m-5$ then $\irr(X) = d$.
\end{thmD}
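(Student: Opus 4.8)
The plan is to run the two-sided argument of \cite[Thm.~C]{ELU}, adapted to the Plücker geometry of $\Grass=\Grkm$.

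\emph{Upper bound.} I would build a degree-$d$ rational map by fibering $\Grass$ by lines of its Plücker embedding. These lines are exactly the pencils $\ell_{W,V}=\{U:W\subset U\subset V\}\cong\PP^1$, indexed by flags $(W,V)\in\Flkkm$ with $\dim W=k-1$ and $\dim V=k+1$. Fixing a general hyperplane $H\subset\CC^m$ and a general line $L\subset\CC^m$ (so $L\not\subset H$), set
$$
\pi\cl\Grass\dra B,\qquad U\longmapsto(U\cap H,\ U+L),
$$
where $B\subseteq\Flkkm$ is the locus of flags $(W,V)$ with $W\subset H$ and $L\subset V$. One checks that $\pi$ is dominant with general fibre over $(W,V)$ the line $\ell_{W,V}$, and that $B$ is (birationally) a $\PP^{k-1}$-bundle over $\{V:L\subset V\}\cong\mathrm{Gr}(k,m-1)$, hence is rational with $\dim B=k(m-1-k)+(k-1)=\dim\Grass-1=n$. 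Since every $\ell_{W,V}$ meets $X$, the restriction $\pi|_X$ dominates $B$; composing with a birational map $B\dra\PP^n$ gives $\phi\cl X\dra\PP^n$ of degree $\#(\ell_{W,V}\cap X)=d$ for general $(W,V)$, because $\ell_{W,V}$ is a line with $\Oc_\Grass(1)\cdot\ell_{W,V}=1$ and, for very general $X$, a general such line is transverse to $X$ and not contained in it. Hence $\irr(X)\le d$. (This is where $k\ne1,n-1$ first enters: for $\Grass=\PP^{m-1}$ the construction has the wrong target dimension, and $\PP^{m-1}$ together with $\mathrm{Gr}(2,4)=\Quad^4$ are already covered by \cite{ELU} and Theorem~A.)

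\emph{Lower bound.} Suppose $\phi\cl X\dra\PP^n$ is dominant of degree $\delta$; I must show $\delta\ge d$. By adjunction and $K_\Grass=\Oc_\Grass(-m)$ (the Fano index of $\Grkm$ is $m$) one has $K_X=\Oc_X(d-m)$, a positive multiple of the ample generator $\Oc_X(1)$ of $\Pic(X)=\ZZ$ (Grothendieck--Lefschetz, as $\dim X\ge3$), with $d-m\ge2m-5$ by hypothesis. Resolving $\phi$, the general fibre gives a covering family $\{D_t\}_{t\in\PP^n}$ of reduced $0$-cycles of degree $\delta$ on $X$. The argument of \cite[Thm.~C]{ELU}---built on the Cayley--Bacharach and vanishing techniques of Bastianelli--Cortini--De Poi and Ein--Lazarsfeld---then applies once one knows that a very general $X\in|\Oc_\Grass(d)|$ with $d\ge3m-5$ satisfies the property $\bva$ (birational very ampleness of the adjoint $K_X$ with a general point assigned; equivalently $\bsp$): it yields that a general $D_t$ either imposes independent conditions on $|K_X|$ twisted by the ideal of a general point of $D_t$---which, after restricting $|K_X|=|\Oc_X(d-m)|$ to lines, already forces $\delta\ge d$---or else $D_t$ lies on a single Plücker line $\ell_t\subset\Grass$. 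In the second case the $\phi$-fibres cut out a monodromy-invariant partition of the $d$ points of $\ell_t\cap X$, which for very general $X$ must be trivial by the uniform position principle (the Galois group of the $d$ intersection points over the family of lines is the full symmetric group); since $\delta=1$ would make $X$ rational, this forces $\delta=d$. Combined with the upper bound, $\irr(X)=d$.

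\emph{Main obstacle.} The step I expect to dominate the work is the verification of the property $\bva$ for a very general hypersurface in $\Grass$---equivalently, that every covering family of reduced $0$-cycles of degree $<d$ on such an $X$ is supported on Plücker lines. This needs the cohomological input for the Plücker polarization (the vanishing $H^1(\Grass,\Oc_\Grass(j))=0$ for all $j$, together with an effective very ampleness/regularity estimate for $\Oc_\Grass(1)$, which paired with $K_X=\Oc_X(d-m)$ is what pins down the threshold $d\ge3m-5$) and the geometric fact that on $\Grass$---once $k\ne1,n-1$---the minimal rational curves, the only possible obstruction to separating a general $0$-cycle by the adjoint system, are exactly the lines $\ell_{W,V}$, whose intersection with $X$ has the controlled length $d$. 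Making these two inputs meet precisely at $3m-5$ is the crux of the extension of \cite[Thm.~C]{ELU} to Grassmannians.
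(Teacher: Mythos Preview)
Your upper bound is correct and essentially the paper's Lemma~5.1: fibering $\Grass$ by Plücker lines via a flag projection and restricting to $X$ gives a degree-$d$ map.

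The lower bound, however, has a genuine gap. Once Cayley--Bacharach forces a general fibre $\phi^{-1}(t)$ onto a Plücker line $\ell_t\subset\Grass$ (this is Lemma~5.2, and you have it in substance), you propose to finish by monodromy: the fibre is a $\delta$-element subset of the $d$-point set $\ell_t\cap X$, and uniform position should force $\delta\in\{1,d\}$. But the family $\{\ell_t\}$ is \emph{not} the full Fano variety $\Flkkm$---it is a particular $n$-dimensional subfamily $B\subset\Flkkm$ determined by $\phi$, and $\dim\Flkkm=n+m-2>n$. Uniform position for very general $X$ gives $S_d$-monodromy over the full family of lines, but says nothing about the monodromy over a special $B$. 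The model case $\Grass=\PP^{n+1}$ already shows the failure: projection from $p\in X$ produces exactly such a subfamily (lines through $p$), over which the $d$ points split as $1+(d-1)$, so the monodromy is at most $S_{d-1}$ even though it is $S_d$ over the full space of lines. Your argument would prove $\irr(X)=d$ for hypersurfaces in $\PP^{n+1}$ as well, which is false.

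The paper closes the gap by an entirely different mechanism. After the fibres lie on lines, one resolves $\PP^n\dra\Flkkm$ to $f\cl B\to\Flkkm$, forms the $\PP^1$-bundle $\psi\cl F\to B$ with evaluation $\pi\cl F\to\Grass$, and shows $\pi$ is \emph{birational} (a ``congruence of order one,'' Lemma~5.3). Writing $\pi^*X=X'+\sum a_iE_i$, the defect $d-\delta$ forces some exceptional $E$ with $c:=E\cdot[\ell]$ satisfying $1\le c\le m-2$. A dimension count on lines through a point of $\Grass$ gives $e:=\dim\pi(E)\ge n-m+2$, and $\pi(E)\subset X$ has covering gonality $\le c$. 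The decisive input---and the actual ``main obstacle''---is a covering-gonality inequality for subvarieties of a very general $X\in|\Oc_\Grass(d)|$, proved along the lines of \cite[Prop.~3.8]{ELU}: one has $c\ge e+d-m-n+2$. Feeding in $c\le m-2$, $e\ge n-m+2$, and $d\ge 3m-5$ yields $m-2\ge m-1$, the contradiction. Your ``main obstacle'' paragraph misidentifies the crux: property $\bva$ for $\omega_X=\Oc_X(d-m)$ is immediate and only gives $\delta\ge d-m+2$; what pins down the threshold $3m-5$ is the interplay of the exceptional-divisor analysis with the covering-gonality bound, not any strengthening of $\bva$.
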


Finally, let $\PP = \PP^{n_1}\times \cdots \times \PP^{n_k}$ be a product of $k$ projective spaces with $k\ge 2$.

\begin{thmE}
Let
$$
X = X_{d_1,\dots,d_k}\subset \PP
$$
be a very general hypersurface with $X\in |\Oc_\PP(d_1,\dots,d_k)|$. Let $p$ be the minimum of $\{d_i-m_i-1\}$. If $p\ge \max\{m_i\}$ then $\irr(X) = \min\{d_i\}$.
\end{thmE}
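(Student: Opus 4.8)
The plan is to establish the two inequalities $\irr(X)\le\min\{d_i\}$ and $\irr(X)\ge\min\{d_i\}$, the first by an explicit projection and the second by adapting to the multiprojective setting the Cayley--Bacharach argument behind \cite[Thm. C]{ELU}. Write $n=\dim X=\big(\sum_i n_i\big)-1$, let $\pi_i\cl\PP\to\PP^{n_i}$ denote the projections, and, after reindexing, assume $\min\{d_i\}=d_1$. For the upper bound, fix a general point $q\in\PP^{n_1}$, let $\tau_q\cl\PP^{n_1}\dra\PP^{n_1-1}$ be linear projection from $q$, and consider
$$
\phi=\big(\tau_q\circ\pi_1,\,\pi_2,\,\dots,\,\pi_k\big)\cl X\dra\PP^{n_1-1}\times\PP^{n_2}\times\cdots\times\PP^{n_k}.
$$
The target is an $n$-dimensional rational variety, and a general fibre of $\phi$ is $X\cap\big(L\times\{q_2\}\times\cdots\times\{q_k\}\big)$ for a line $L\subset\PP^{n_1}$ through $q$. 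Since the intersection number of $X$ with an axis-parallel line in the first factor equals $d_1$, and since $X$ contains no subvariety of the form $\{q\}\times\PP^{n_2}\times\cdots\times\PP^{n_k}$, the general fibre consists of exactly $d_1$ reduced points; hence $\phi$ is dominant and generically finite of degree $d_1$, giving $\irr(X)\le d_1$.

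For the lower bound, suppose $\psi\cl X\dra\PP^n$ is dominant of degree $\delta\le d_1-1$; I will derive a contradiction. Adjunction on $\PP$ gives $K_X=\Oc_X(d_1-n_1-1,\dots,d_k-n_k-1)$, every exponent of which is $\ge p\ge\max\{n_i\}$ by hypothesis. Because $X$ is very general and the $d_i$ are large, $\Pic(X)\cong\Pic(\PP)\cong\ZZ^k$ (Noether--Lefschetz), so a class $\Oc_X(a_1,\dots,a_k)$ is effective exactly when all $a_i\ge0$. Let $D\subset X$ be a general fibre of $\psi$, a reduced set of $\delta$ points. Resolving $\psi$ and comparing dualizing sheaves (Mumford's argument, as in \cite{ELU}) shows that $D$ satisfies the Cayley--Bacharach condition with respect to $|K_X|$: for every $x\in D$, any section of $K_X$ vanishing on $D\setminus\{x\}$ also vanishes at $x$. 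Moreover, the very-generality of $X$ forces the fibres of $\psi$ to be in uniform position, so this condition is rigid.

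The heart of the proof --- and the step that genuinely goes beyond \cite{ELU} --- is to run this Cayley--Bacharach analysis simultaneously over the $k$ factors and conclude that the uniform-position set $D$, having $\delta\le d_1-1$ points, must lie on a line of $\PP$; a line contained in the Segre image of $\PP$ is an axis-parallel line $L\times\{q_2\}\times\cdots\times\{q_k\}$ with $L\subset\PP^{n_j}$ for a single index $j$. This is exactly where $p\ge\max\{n_i\}$ enters: it makes $|K_X|$ positive enough in every factor that, were $D$ not contained in such a line, one could exhibit a divisor in $|K_X|$ passing through $D\setminus\{x\}$ but not through $x$ --- built by multiplying hypersurfaces in the individual factors $\PP^{n_i}$ that pass through the relevant projections of $D\setminus\{x\}$ and miss $\pi_i(x)$, then absorbing the leftover effective class --- contradicting Cayley--Bacharach; uniform position is what reduces the bookkeeping to a single factor and excludes the partially degenerate configurations. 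Granting this, the axis-parallel lines carrying the fibres of $\psi$ form a family through which $\psi$ factors, and, as in \cite{ELU}, one checks that inside each slice $\PP^{n_j}\times\{q_2,\dots,q_k\}$ these lines all pass through a common point $q_0$, so that $\psi$ is birationally the projection of the $\PP^{n_j}$-factor from $q_0$ --- that is, the map $\phi$ of the upper bound. Since $X$ is very general of this multidegree it contains no linear subvariety $\{q_0\}\times\prod_{i\ne j}\PP^{n_i}$, so $q_0\notin X$ and a general fibre of that projection is the full intersection of $X$ with an axis-parallel line in the $j$-th factor, consisting of $d_j$ reduced points. Thus $\delta=d_j\ge d_1$, contradicting $\delta\le d_1-1$; hence $\irr(X)\ge d_1$, and combined with the upper bound $\irr(X)=\min\{d_i\}$.

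The routine ingredients --- the projection giving the upper bound and the Cayley--Bacharach/uniform-position formalism --- transplant directly from \cite{ELU}. The main obstacle, and the part I expect to demand the most care, is the ``forcing a line'' step in the multiprojective setting: one must make the covering-by-canonical-divisors construction uniform across all $k$ factors, keeping track of how the single quantity $p$ controls the positivity available in each $\PP^{n_i}$ at once, and verify that $p\ge\max\{n_i\}$ is precisely the threshold that makes the separating divisor exist (and that partially degenerate $D$ --- lying in a single slice but not on a line, or differing from a slice --- gets excluded, the former reducing to the $\PP^{n_j}$-case of \cite{ELU}). A secondary point is to confirm that the identification of a degree-$\min\{d_i\}$ map with the projection $\phi$, together with the B\'ezout count, survives the presence of several factors.
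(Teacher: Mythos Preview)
Your upper bound matches the paper's, and the conclusion that the fibres of a low-degree map lie on axis-parallel lines of $\PP$ is also correct---though the paper obtains it far more simply than you suggest. One embeds $\PP$ in $\PP^N$ via Segre, observes that $\omega_X=\Oc_X(d_1-m_1-1,\dots,d_k-m_k-1)$ is $p$-very ample, and applies Lemma~\ref{BCDline} together with Bezout (the Segre variety being cut out by quadrics). The hypothesis $p\ge\max\{m_i\}$ enters only to ensure $\delta<d\le 2p+1$; no factor-by-factor separating-divisor construction is needed, and neither Noether--Lefschetz nor uniform position plays any role.

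The genuine gap is your endgame. You assert that inside each slice $\PP^{m_1}\times\{y\}$ the lines pass through a \emph{single fixed} point $q_0$, and that since $\{q_0\}\times\PP_0\not\subset X$ the projection has degree $d_j$, contradicting $\delta<d$. But this is not what \cite{ELU} yields. After establishing that the associated congruence of lines on $\PP$ has order one, the paper restricts to a very general slice $X_y\subset\PP^{m_1}$ and invokes \cite[Thm.~C]{ELU}: the slice map is projection from a point $q_0(y)$ \emph{lying on} $X_y$, so $\delta=d_1-1$, not $d_1$. Since $(q_0(y),y)\in X$ for every general $y$, your own observation that $\{q_0\}\times\PP_0\not\subset X$ forces $q_0(y)$ to vary with $y$; in particular there is no single $q_0$, and no direct contradiction appears---$\delta=d-1$ is perfectly consistent with $\delta<d$. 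The paper instead observes that the varying centres sweep out a uniruled subvariety $\pi(E)\subset X$ of dimension $n+1-m_1$ (it arises as the image of an exceptional component $E\subset\pi^{-1}(X)$ birational to the rational base $B$), and the contradiction then comes from a covering-gonality lower bound for subvarieties of very general hypersurfaces in $\PP$ (\cite[Prop.~3.13 and 3.19]{Thesis}). This last step---the existence and application of such a bound in the multiprojective setting---is the substantive ingredient missing from your proposal.
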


A recurring theme throughout the paper is that the positivity of the canonical linear series helps to control the degree of irrationality. For example, given a dominant rational map:
$$
\phi\cl X \dra \PP^n,
$$
every finite fiber of $\phi$ satisfies the \textit{Cayley-Bacharach condition} (Definition 1.6) with respect to the canonical linear series $|\omega_X|$. This affects the possible projective configurations of the fibers. As a consequence, if $Z \subset \PP$ is one of the Fano varieties above in its natural projective embedding, and $X\subset Z$ is a hypersurface of sufficiently high degree, then we will see that any fiber of $\phi$ must lie on a low degree curve $C\subset Z$ (in the cases we consider, $C$ will always have degree $\le 2$).

This allows us to study low degree maps to $\PP^n$ by studying the geometry of low degree curves on these Fano varieties. In some cases (when $Z$ is a cubic threefold, or a (2,2) complete intersection threefold), the geometry of the spaces parametrizing low degree curves is explicit enough to complete the computation of the degree of irrationality of $X\subset Z$. In the other cases, we use the assumption that $X$ is very general and  follow the ideas of \cite[Prop. 3.8]{ELU} to conclude the proofs.
 
We would like to thank Enrico Arbarello, Lawrence Ein, Sam Grushevsky, Joe Harris, James Hotchkiss, Rob Lazarsfeld, Giulia Sacc\`a, John Sheridan, Ian Shipman, and Ruijie Yang for interesting and helpful conversations. The research of the second author was partially supported by an NSF Postdoctoral Fellowship, DMS-1502687.


\section{Background}

In this section we introduce the main definitions, and recall some known results. There is a nice introduction to these ideas in \cite{ELU}, and we refer the interested reader there for more details. At the end of this section, we also prove a preliminary result about points in projective space satisfying the Cayley-Bacharach condition.

\begin{definition}
Let $X$ be an $n$-dimensional algebraic variety. The \textit{degree of irrationality of $X$}, denoted $\irr(X)$, is the minimal degree of a rational map
$$
\phi\colon X \dra \PP^n.
$$
\end{definition}

The degree of irrationality of $X$ is a birational invariant of $X$. It is possible to give lower bounds on $\irr(X)$ by understanding the birational positivity of $K_X$, in an appropriate sense.

\begin{definition}
Let $L$ be a line bundle on a variety $X$. 
\begin{enumerate}
\item We say $L$ is \textit{$p$-very ample} if for all 0-dimensional subschemes $\xi \subset X$ of length $p+1$, the restriction map
$$
H^0(X,L) \ra H^0(X,L|_\xi)
$$
is surjective.
\item We say $L$ \textit{satisfies property} $\bva$ if there exists a nonempty open set $\emptyset\ne U\subset X$ such that for all 0-dimensional subschemes $\xi \subset U$ of length $p+1$, the restriction map
$$
H^0(X,L) \ra H^0(X,L|_\xi)
$$
is surjective.
\end{enumerate}
\end{definition}

\begin{example}
A line bundle $L$ satisfies $\BVA_0$ if and only if $L$ is effective. Moreover, $L$ satisfies $\BVA_1$ if and only if the linear series $|L|$ maps $X$ birationally onto its image in projective space.
\end{example}

\begin{example}
If $L=\Oc_\PP(p)$, then $L$ is \textit{$p$-very ample} and in particular, $L$ satisfies $\bva$.
\end{example}

\begin{theorem}[{\cite[Thm. 1.10]{ELU}}]
Let $X$ be a smooth projective variety and suppose that $\omega_X$ satisfies property $\bva$, then
$$
\irr(X)\ge p+2.
$$
\end{theorem}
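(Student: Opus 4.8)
The plan is to show that every dominant rational map $\phi\cl X\dra\PP^n$ has degree at least $p+2$; since $\irr(X)$ is by definition the minimum of these degrees, this proves the theorem. Two facts will collide: a general fibre of $\phi$ satisfies the Cayley--Bacharach condition with respect to $|\omega_X|$ (every $D\in|\omega_X|$ containing all but one point of the fibre also contains the last point), while $\bva$ forces $|\omega_X|$ to separate the points of an arbitrary reduced $0$-dimensional subscheme of length $p+1$ sitting inside a fixed open set $U\subset X$. To set things up, I would first resolve the indeterminacy of $\phi$, obtaining a smooth projective $Y$, a birational morphism $\mu\cl Y\to X$, and a morphism $f\cl Y\to\PP^n$ with $f=\phi\circ\mu$; write $K_Y=\mu^*K_X+E$ with $E$ effective and $\mu$-exceptional. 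By generic smoothness (we are in characteristic $0$), for general $t\in\PP^n$ the fibre $f^{-1}(t)$ is a reduced set of $\delta$ points (where $\delta$ denotes the degree of $\phi$) avoiding $E$ and the locus where $\mu$ is not an isomorphism, so $\mu$ carries it isomorphically onto a reduced set $\Gamma=\{x_1,\dots,x_\delta\}\subset X$, which lies in $U$ for general $t$.

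The heart of the matter is the Cayley--Bacharach property for $\Gamma$, which I would prove by restricting to a general curve. Fix a general line $\ell\subset\PP^n$ through $t$, put $\widetilde C=f^{-1}(\ell)$, and let $L=K_Y|_{\widetilde C}$ and $H_C=(f|_{\widetilde C})^*\Oc_{\PP^1}(1)$ denote the restrictions to $\widetilde C$ of $K_Y$ and of $H=f^*\Oc_{\PP^n}(1)$. For general $\ell$ the curve $\widetilde C$ is smooth (Bertini together with generic smoothness, using that $f$ is \'etale over a neighborhood of $t$ to see smoothness even along $\Gamma$), it automatically contains $\Gamma=(f|_{\widetilde C})^{-1}(t)$, and --- since the curves $f^{-1}(\ell')$ sweep out $Y$ as $\ell'$ varies --- it is contained in no fixed effective divisor of $Y$. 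Adjunction gives $K_{\widetilde C}=L+(n-1)H_C$. If some $D\in|\omega_X|$ contained $\Gamma\setminus\{x_i\}$ but not $x_i$, then $(\mu^*D+E)|_{\widetilde C}$ would be an effective divisor in the class $L$ containing $\Gamma\setminus\{x_i\}$ but not $x_i$; by the Riemann--Roch and Serre-duality criterion for Cayley--Bacharach on a curve, this forces $x_i$ to be a base point of the linear series $|K_{\widetilde C}-L+\Gamma|=|(n-1)H_C+\Gamma|$. But the class $(n-1)H_C+\Gamma$ is pulled back from $\Oc_{\PP^1}(n)$ on $\ell\cong\PP^1$, so this series is base-point-free --- a contradiction. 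Pulling sections back along $\mu$ (legitimate since $\Gamma$ misses $E$) transfers this to the asserted Cayley--Bacharach property of $\Gamma$ with respect to $|\omega_X|$ on $X$.

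Now suppose $\delta\le p+1$. Enlarging $\Gamma$ by general points of $U$ produces a reduced $0$-dimensional subscheme $\Gamma'\subset U$ of length exactly $p+1$. Since $\omega_X$ satisfies $\bva$, the restriction $H^0(X,\omega_X)\to H^0(X,\omega_X|_{\Gamma'})=\bigoplus_{x\in\Gamma'}(\omega_X)_x$ is surjective, so there is a section $s$ that vanishes at every point of $\Gamma'$ except $x_\delta$. Its zero divisor $D\in|\omega_X|$ then contains $\Gamma\setminus\{x_\delta\}$ but not $x_\delta$, contradicting the Cayley--Bacharach property established above. Hence $\delta\ge p+2$ for every dominant rational map $X\dra\PP^n$, i.e.\ $\irr(X)\ge p+2$.

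I expect the curve reduction in the second step to be the main obstacle: one has to verify simultaneously that the general curve $\widetilde C=f^{-1}(\ell)$ is smooth --- including along $\Gamma$ --- that it contains all of $\Gamma$ and is not a component of the fixed divisor $D$, and that the relevant twist of $K_{\widetilde C}$ is genuinely base-point-free; keeping careful track of the exceptional divisor $E$ while moving divisors and sections between $Y$ and $X$ is the remaining bookkeeping. (When $n=1$ the curve $\widetilde C$ is just $X$ itself and the argument collapses to the classical Cayley--Bacharach bound for curves.)
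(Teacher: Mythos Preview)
The paper does not prove this theorem; it is quoted as \cite[Thm.~1.10]{ELU} and used as a black box. So there is no in-paper proof to compare against, but your argument is correct and follows the same overall strategy as the original source: first show that a general fibre $\Gamma$ of $\phi$ satisfies Cayley--Bacharach with respect to $|\omega_X|$, then observe that $\bva$ makes this impossible when $\#\Gamma\le p+1$.

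Where you differ from the usual presentation is in the proof of the Cayley--Bacharach step. You slice by a general line $\ell\subset\PP^n$, pass to the curve $\widetilde C=f^{-1}(\ell)$, and run Riemann--Roch/Serre duality there; your key identification $K_{\widetilde C}-L+\Gamma\sim nH_C=g^*\Oc_{\PP^1}(n)$, hence base-point-free, is exactly right. The more common route (and the one used in \cite{ELU}, going back to Mumford) bypasses the curve: one uses the trace map $f_*\omega_Y\to\omega_{\PP^n}$ directly on holomorphic $n$-forms. Since $H^0(\PP^n,\omega_{\PP^n})=0$, every global section of $\omega_Y$ has vanishing trace, and over a general $t$ the trace is (after local trivialisation) the sum of the values over the fibre; hence a section vanishing on $\Gamma\setminus\{x_i\}$ must also vanish at $x_i$. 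That argument is shorter and avoids the smoothness and non-containment checks for $\widetilde C$ that you flag at the end, but your reduction has the merit of making the mechanism transparent via classical curve theory. Either way the final collision with $\bva$ is identical.
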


One fundamental fact that we will use is that the fibers of a dominant rational map
$$
\phi\colon X \dra \PP^n
$$
lie in special position, in the sense that they satisfy the \textit{Cayley-Bacharach condition}.

\begin{definition}
Let $\Sc\subset \PP$ be a finite subset of projective space. We say that the set $\Sc$ \textit{satisfies the Cayley-Bacharach condition with respect to} $|\Oc_\PP(m)|$ (or just $\Sc$ \textit{satisfies} $\cb(m)$) if any divisor $D\in |\Oc_\PP(m)|$ which contains all but one point of $\Sc$, contains all of $\Sc$.
\end{definition}

Let $X\subset \PP$ be a smooth $n$-dimensional subvariety of projective space. Assume that $\omega_X = \Oc_X(m)$ for some $m$. The following proposition was proven by Bastianelli, Cortini, and De Poi.

\begin{proposition}[{\cite[Prop. 2.3]{BastCortDe}}]\label{BCD} 
\begin{enumerate}
\item Assume that $\Gamma\subset X\times \PP^n$ is a reduced subscheme of pure dimension $n$. Assume that $y\in \PP^n$ is a smooth point for the projection
$$ \pi_2|_\Gamma \colon \Gamma \ra \PP^n.$$
Then the set $\Sc = \pi_1((\pi_2|_\Gamma)^{-1}(y))$ satisfies $\cb(m).$
\item In the special case when $\Gamma$ is the graph of a rational map $\phi\colon X \dra \PP^n$, (1) implies that a general fiber of $\phi$ satisfies $\cb(m)$.
\end{enumerate}
\end{proposition}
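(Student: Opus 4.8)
The plan is to prove (1) via the theorem of the trace for differential forms, and to deduce (2) as an immediate consequence. It is convenient to first reformulate the desired conclusion. Write $\mathcal S=\{x_1,\dots,x_\delta\}$ and fix a local trivialization of $\omega_X$ near each $x_j$; then $\mathcal S$ satisfies $\cb(m)$ as soon as, for every index $i$, there is a linear relation $\sum_{j=1}^{\delta}\mu_j\,\mathrm{ev}_{x_j}=0$ among the evaluation functionals on $H^0(X,\omega_X)=H^0(X,\Oc_X(m))$ with $\mu_i\neq 0$: indeed, if $D=\{s=0\}\in|\Oc_\PP(m)|$ contains every point of $\mathcal S$ but $x_i$, then $s|_X\in H^0(X,\omega_X)$ lies in $\ker(\mathrm{ev}_{x_j})$ for all $j\neq i$, so the relation forces $\mu_i\,\mathrm{ev}_{x_i}(s|_X)=0$, hence $x_i\in D$. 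So it is enough to exhibit, for general $y$, one relation among the $\mathrm{ev}_{x_j}$ in which \emph{every} coefficient is nonzero.

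To construct it, I would pass to a resolution $\rho\colon\widetilde\Gamma\to\Gamma$ which is an isomorphism over the smooth locus of $\Gamma$ (hence over the reduced, zero-dimensional fiber $(\pi_2|_\Gamma)^{-1}(y)$), and set $p=\pi_1\circ\rho$ and $q=\pi_2\circ\rho$. Then $q\colon\widetilde\Gamma\to\PP^n$ is generically finite, say of degree $\delta$, and after shrinking we may assume it is finite étale over a neighborhood of $y$, with $q^{-1}(y)=\{\widetilde x_1,\dots,\widetilde x_\delta\}$ and $p(\widetilde x_j)=x_j$; the $x_j$ are automatically distinct, as the points of $(\pi_2|_\Gamma)^{-1}(y)\subset X\times\PP^n$ all share the second coordinate $y$. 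Assuming $p$ is dominant — automatic when $\Gamma$ is a graph, and implicit in the general statement — $p$ is also generically finite and we may assume it étale near each $\widetilde x_j$; since $p$ is then a generically finite map of smooth $n$-folds, there is a canonical inclusion $p^*\omega_X\hookrightarrow\omega_{\widetilde\Gamma}$, so every $\eta\in H^0(X,\omega_X)$ pulls back to a regular $n$-form $\widetilde\eta$ on $\widetilde\Gamma$. Now consider the trace $\mathrm{Tr}_q(\widetilde\eta)$, a rational $n$-form on $\PP^n$: over the open locus where $q$ is finite it is a regular section of $\omega_{\PP^n}$, and the complement of that locus has codimension $\ge2$ (by Stein factorization, together with the fact that a birational morphism to a normal variety is an isomorphism away from a subset of codimension $\ge2$), so $\mathrm{Tr}_q(\widetilde\eta)$ extends to a global section of $\omega_{\PP^n}$ and hence vanishes identically, as $H^0(\PP^n,\omega_{\PP^n})=0$. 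On the other hand, near $y$ the map $q$ restricts to $\delta$ local isomorphisms onto a common neighborhood of $y$, and a coordinate computation identifies the value of $\mathrm{Tr}_q(\widetilde\eta)$ at $y$ with $\sum_j\mu_j\,\eta(x_j)$, where $\mu_j=\det(dp_{\widetilde x_j})/\det(dq_{\widetilde x_j})\neq0$. Thus $\sum_j\mu_j\,\mathrm{ev}_{x_j}=0$ with all $\mu_j\neq0$, and $\mathcal S$ satisfies $\cb(m)$, which proves (1). For (2): when $\Gamma$ is the graph of $\phi$, the projection $\pi_2|_\Gamma$ is generically finite, a general $y\in\PP^n$ is a smooth point of it by generic smoothness, and $\pi_1\big((\pi_2|_\Gamma)^{-1}(y)\big)=\phi^{-1}(y)$, so a general fiber of $\phi$ satisfies $\cb(m)$.

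The main obstacle, I expect, is making the trace step precise: one must check that $\mathrm{Tr}_q(\widetilde\eta)$ genuinely has no poles on $\PP^n$ — i.e. that the non-finite locus of $q$ has codimension at least $2$, so that a Hartogs-type extension of an $n$-form applies — and one must carry out the local coordinate bookkeeping identifying the value of the trace at a general $y$ with $\sum_j\mu_j\,\eta(x_j)$ and exhibiting each $\mu_j$ as a nonzero ratio of Jacobians. It is also worth being explicit about the reduction to the case $\pi_1|_\Gamma$ dominant, since the form $\widetilde\eta$ is identically zero otherwise.
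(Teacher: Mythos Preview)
The paper does not supply a proof of this proposition at all: it is simply quoted from \cite[Prop.~2.3]{BastCortDe}, and the exposition moves on immediately to the corollary (Lemma~1.8). So there is no ``paper's own proof'' to compare against.

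That said, your argument via the trace of holomorphic $n$-forms is exactly the classical one, and it is essentially the argument given in the cited reference (which in turn goes back to Mumford and Lopez). The logical skeleton --- resolve $\Gamma$, pull back $\eta\in H^0(X,\omega_X)$ along $p$, push forward along $q$ by the trace, observe that the result is a global section of $\omega_{\PP^n}=\Oc_{\PP^n}(-n-1)$ and hence vanishes, and read off the relation $\sum_j\mu_j\,\eta(x_j)=0$ with all $\mu_j\neq 0$ from the local description of the trace at an \'etale point --- is correct, and your reformulation of $\cb(m)$ in terms of a single nontrivial linear dependence among the evaluation functionals is the right way to package the conclusion. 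The two technical points you isolate (Hartogs extension of $\mathrm{Tr}_q(\widetilde\eta)$ across the codimension-$\ge 2$ non-finite locus of $q$, and the Jacobian identification of the coefficients $\mu_j$) are precisely the places where care is required, and both go through as you indicate.

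Your caveat about dominance of $\pi_1|_\Gamma$ is well taken and worth keeping: without it the pullback $p^*\eta$ is identically zero and the argument collapses, while the statement of part~(1) as written in the paper does not include that hypothesis explicitly. (Indeed, $\Gamma=\{x_0\}\times\PP^n$ gives a fiber consisting of a single point, which does not satisfy $\cb(m)$ for $m\ge 0$.) In the source \cite{BastCortDe} the correspondence is assumed to dominate $X$, and in this paper the proposition is only ever applied in the graph case~(2), where dominance is automatic; so this is a harmless omission in the paper's transcription of the statement rather than a flaw in your proof.
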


Furthermore, those authors show that there are strong geometric consequences imposed on small sets $\Sc\subset \PP$ which satisfy $\cb(m)$.

\begin{lemma}[{\cite[Lem. 2.4]{BastCortDe}}]\label{BCDline}
Let $n\ge 2$ and let $\Sc\subset \PP$ be a set of $r$ points in projective space which satisfy $\cb(m)$. Then $r \ge m+2$. Moreover, if $r \le 2m+1$ then all the points in $\Sc$ lie on a line $\ell\subset \PP^n$.
\end{lemma}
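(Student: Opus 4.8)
The plan is to prove the contrapositive of each assertion by producing, whenever the conclusion fails, a divisor $D\in|\Oc_\PP(m)|$ which is a union of $m$ hyperplanes and contains every point of $\Sc$ except one — directly contradicting $\cb(m)$. The construction rests on two routine facts about hyperplanes in $\PP^n$, each proved by a dimension count on the linear system of hyperplanes through a fixed linear subspace: (i) for distinct points $P,Q$ there is a hyperplane through $Q$ avoiding $P$; and (ii) for points $P,Q,Q'$ with $P\notin\overline{QQ'}$ there is a hyperplane containing the line $\overline{QQ'}$ and avoiding $P$ (this is where $n\ge 2$ is used). Consequently, if some point $P\in\Sc$ admits a decomposition of $\Sc\setminus\{P\}$ into at most $m$ \emph{blocks}, each of size $1$, or of size $2$ and spanning a line not through $P$, then realizing each block by a hyperplane as in (i) or (ii) and adjoining further hyperplanes avoiding $P$ to reach degree exactly $m$ gives the desired $D$.

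For the bound $r\ge m+2$: if instead $r\le m+1$, pick any $P\in\Sc$; then $\Sc\setminus\{P\}$ consists of $r-1\le m$ points, taken as $r-1$ singleton blocks, and we are done. For the ``moreover,'' suppose $r\le 2m+1$ and $\Sc$ is not contained in a line; I must locate a suitable $P$. Having fixed $P$, partition $\Sc\setminus\{P\}$ by recording, for each point, the line through $P$ on which it lies, obtaining cluster sizes $a_1\ge a_2\ge\cdots\ge a_t\ge 1$ with $\sum_i a_i=r-1\le 2m$; a size-$2$ block is admissible exactly when its two points lie in different clusters. A greedy matching shows the minimum number of blocks needed equals $\max\!\big(\lceil(r-1)/2\rceil,\,a_1\big)$: when $a_1\le (r-1)/2$ one pairs points off across clusters, with at most one leftover singleton; otherwise one pairs every point outside the largest cluster with a point inside it and covers the remaining collinear points of that cluster individually. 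Since $r-1\le 2m$, the first term is $\le m$, so it suffices to choose $P$ with $a_1=a_1(P):=\max_{\ell\ni P}\lvert\ell\cap\Sc\rvert-1\le m$.

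It remains to produce such a $P$. If every line of $\PP^n$ meets $\Sc$ in at most $m+1$ points, then $a_1(P)\le m$ for every $P\in\Sc$ and any choice works. Otherwise there is a line $\ell_0$ with $\lvert\ell_0\cap\Sc\rvert\ge m+2$; since $\Sc$ is not collinear we may choose $P\in\Sc\setminus\ell_0$, and then any line through $P$ meets $\ell_0$ in at most one point, so it contains at most one point of $\ell_0\cap\Sc$ together with at most $\lvert\Sc\rvert-\lvert\ell_0\cap\Sc\rvert-1\le m-2$ further points of $\Sc\setminus\{P\}$, giving $a_1(P)\le m-1$. Either way we obtain a contradiction, so $\Sc$ lies on a line. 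I do not anticipate any genuine obstacle here: the only real content is the elementary matching count of the second paragraph and the short case split just above, facts (i) and (ii) being standard dimension counts.
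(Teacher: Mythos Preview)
The paper does not supply its own proof of this lemma; it is quoted verbatim as \cite[Lem.~2.4]{BastCortDe} and used as a black box throughout. So there is nothing in this paper to compare against.

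That said, your argument is correct. The two hyperplane facts (i) and (ii) are immediate dimension counts, and the reduction to finding $P$ with $a_1(P)\le m$ via the matching bound $\max\big(\lceil(r-1)/2\rceil,\,a_1\big)$ is clean and accurate: that formula is exactly the minimum block count for a complete multipartite graph, and $r-1\le 2m$ handles the first term. The case split producing $P$ is fine too; in the second case your count $a_1(P)\le 1+(m-2)=m-1$ goes through because any line through $P\notin\ell_0$ meets $\ell_0$ in at most one point and there are at most $r-|\ell_0\cap\Sc|-1\le m-2$ points of $\Sc\setminus\{P\}$ off $\ell_0$. One small remark: your proof of $r\ge m+2$ uses only fact (i) and works already in $\PP^1$, so the hypothesis $n\ge2$ is needed only for the ``moreover'' clause, exactly where you invoke (ii).
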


\noindent Finally, in order to prove Theorem B and Theorem C we need a mild generalization of Lemma 1.8. We encourage the casual reader to skip the proof of the following theorem.

\begin{theorem}\label{deg2curve}
Let $\Sc$ be a set of $r$ points in projective space which satisfy $\cb(m)$. If
$$
r\le (5/2) m + 1
$$
then $\Sc$ is contained in a curve $C$ with $\deg(C)\le 2$ (either a line, a plane conic, or a union of two lines).
\end{theorem}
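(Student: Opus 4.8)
The plan is to induct on $m$, mimicking the proof of Lemma~\ref{BCDline} but tracking when the points fail to lie on a line. Set $r \le (5/2)m+1$ and suppose $\Sc$ satisfies $\cb(m)$. If $r \le 2m+1$, then Lemma~\ref{BCDline} already puts $\Sc$ on a line, so we may assume $2m+2 \le r \le (5/2)m+1$, which in particular forces $m \ge 2$. First I would record the standard reduction: we may assume $\Sc$ spans $\PP^n$ (otherwise work in the span), and we may assume $\Sc$ is not contained in a line. The goal is then to produce a plane conic or a pair of lines through $\Sc$.

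The key step is to split off a line. Pick a line $\ell$ meeting $\Sc$ in as many points as possible, say $\ell \cap \Sc$ has $s$ points, and let $\Sc' = \Sc \setminus \ell$, with $r' = r - s$ points. The Cayley--Bacharach hypothesis is inherited in the following weak form: if $D \in |\Oc_{\PP}(m)|$ contains all but one point of $\Sc'$, then $D \cup \ell \in |\Oc_\PP(m+1)|$ contains all but one point of $\Sc$, hence by $\cb(m)$ — wait, the degrees don't match, so instead I would argue directly that $\Sc'$ satisfies $\cb(m-1)$: given $D' \in |\Oc_\PP(m-1)|$ containing all but one point of $\Sc'$, the divisor $D' + \ell \in |\Oc_\PP(m)|$ contains all but one point of $\Sc$, so by $\cb(m)$ it contains all of $\Sc$, hence $D'$ contains all of $\Sc'$ (since $\ell$ picks up only the points on $\ell$, and the remaining point of $\Sc'$ is off $\ell$ by maximality — here one must check the excluded point is genuinely not absorbed by $\ell$, using $s \ge 2$). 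Thus $\Sc'$ satisfies $\cb(m-1)$. Now I want $r' \le 2(m-1)+1 = 2m-1$ so that Lemma~\ref{BCDline} applies to $\Sc'$ and puts it on a line $\ell'$; then $\Sc \subset \ell \cup \ell'$ and we are done. This requires $s \ge r - (2m-1) \ge (2m+2)-(2m-1) = 3$, i.e. some line meets $\Sc$ in at least $3$ points. The combinatorial heart of the argument is therefore: if $\Sc$ (with $r \le (5/2)m+1$, $m \ge 2$, not on a line) satisfies $\cb(m)$ and \emph{no} line meets $\Sc$ in $\ge 3$ points, derive a contradiction or land on a conic.

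To handle that remaining case I would use a counting/degeneracy argument. If no three points of $\Sc$ are collinear, consider the possibility that $\Sc$ lies on a conic: if $\Sc$ spans a plane, then $r \ge (5/2)m+1 \ge 6$ points in general linear position in $\PP^2$... but $\cb(m)$ should force them onto a conic by a dimension count — a conic through $\ge 5$ of them, plus $\cb$, forces the last one on. More robustly, I would run the induction in the form: among all lines, the maximal incidence $s$ satisfies $s \ge 2$ (else $\Sc$ is in general position and a direct $\cb$ computation with the linear system of $(m-1)$-ics through a chosen subset gives a contradiction with the bound $r \le (5/2)m+1$), and if $s = 2$ the conic case is forced. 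The step I expect to be the main obstacle is precisely this analysis when the maximal line-incidence is exactly $2$: ruling out genuinely ``spread out'' configurations and showing the only survivor is a plane conic. This likely needs the residuation form of Cayley--Bacharach (a conic through five of the points, residuate, iterate) together with the numerical slack $(5/2)m+1 < 3m$ to prevent the points from escaping onto a cubic or higher. I would organize it as: (i) $s \ge 2$ always; (ii) if $s \ge 3$, split off a line and finish by Lemma~\ref{BCDline}; (iii) if $s = 2$, show $\Sc$ is planar and lies on a conic via residuation.
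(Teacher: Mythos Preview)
Your residuation/induction framework is the right instinct, and the observation that $\Sc' = \Sc \setminus \ell$ satisfies $\cb(m-1)$ is correct and is exactly what the paper uses. But your case (ii) has a real gap. You need $r' = r - s \le 2(m-1)+1$ to invoke Lemma~\ref{BCDline} on $\Sc'$, and this demands $s \ge r - (2m-1)$, which for $r$ near the top of the range $(5/2)m+1$ is about $m/2+2$, not $3$. Your sentence ``if $s \ge 3$, split off a line and finish by Lemma~\ref{BCDline}'' is simply false when, say, $m=10$, $r=26$, and the best line has exactly $3$ points: then $r' = 23 > 19 = 2(m-1)+1$. If instead you try to invoke the \emph{theorem} inductively on $\Sc'$ (which is legitimate: $r-3 \le (5/2)(m-1)+1$), you only get $\Sc'$ on a curve of degree $\le 2$, hence $\Sc$ on a curve of degree $\le 3$. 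Reducing that $3$ back down to $2$ is precisely the work you have not done, and it is the bulk of the paper's argument.

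There is a second gap in case (iii): in $\PP^n$ for $n \ge 3$, ``no three collinear'' does not force $\Sc$ to be planar, so your proposed conic-through-five-points argument has no traction until you have reduced to a plane. The paper handles this by proving the planar case first as a separate lemma (splitting off either a line with $\ge 3$ points \emph{or} a conic with $\ge 5$ points --- one of these always exists in $\PP^2$ --- and then doing a careful case analysis on the resulting degree-$\le 4$ configuration to push it down to degree $\le 2$). Only then does it attack the general $\PP^n$ case, by splitting off a \emph{plane} with the maximal number of points, residuating, and again case-analyzing. Your outline is missing both the planar/non-planar separation and the degree-reduction case analysis; without them the argument does not close.
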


To prove Theorem 1.9, we start with the case when $\Sc\subset \PP^2$ is contained in a plane.

\begin{lemma}
Let $\Sc\subset \PP^2$ be a set of $r$ points which satisfy $\cb(m)$. If
$$
r\le (5/2)m+1
$$
then $\Sc$ is contained in a curve $C\subset \PP^2$ with degree $\le 2$.
\end{lemma}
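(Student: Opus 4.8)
The plan is to induct on $m$. For $m\le 2$ the statement is immediate: if $r\le 5$ then $\Sc$ lies on a conic automatically (five points always do), and the only remaining possibility is $m=2$, $r=6$, where the conic through five of the six points must contain the sixth by $\cb(2)$. So assume $m\ge 3$ and that the lemma holds for $m-1$. If $\Sc$ lies on a line we are done, so assume it does not; then Lemma~\ref{BCDline} forces $r\ge 2m+2$.

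The engine of the argument is a residuation step. Let $\ell$ be a line meeting $\Sc$ in a maximal number $s$ of points, and set $\Sc'=\Sc\setminus\ell$, $r'=|\Sc'|=r-s$. Then $\Sc'$ satisfies $\cb(m-1)$: if $D\in|\Oc_{\PP^2}(m-1)|$ contains all but one point $p$ of $\Sc'$, then $\ell\cup D\in|\Oc_{\PP^2}(m)|$ contains $\Sc\setminus\{p\}$, so $\cb(m)$ gives $p\in\ell\cup D$, and since $p\notin\ell$ this means $p\in D$. Now either $\Sc'$ lies on a line $\ell_1$, in which case $\Sc\subset\ell\cup\ell_1$ and we are done, or $\Sc'$ lies on no line, and then Lemma~\ref{BCDline} (applied to $\Sc'$ with parameter $m-1$) gives $r'\ge 2m$, whence $s=r-r'\le\tfrac{5}{2}m+1-2m=\tfrac{m}{2}+1$; in particular no line meets $\Sc$ in more than $\tfrac{m}{2}+1$ points.

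Assume henceforth that $\Sc'$ lies on no line, and split according to the value of $s$. Suppose first $s\ge 3$ (this is vacuous when $m=3$, since then $r'\ge 2m$ and $s\ge 3$ would force $r\ge 2m+3>\tfrac{5}{2}m+1$, so $m\ge 4$). Then $r'=r-s\le\tfrac{5}{2}m-2<\tfrac{5}{2}(m-1)+1$, so the inductive hypothesis applies to $\Sc'$: it lies on a curve of degree $\le 2$. This curve is not a line (by assumption), and it is not a union of two lines either — one of the two components would then carry at least $r'/2\ge m$ points of $\Sc$, contradicting $s\le\tfrac{m}{2}+1$ — so $\Sc'$ lies on an irreducible conic $Q'$. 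Choose $p\in(\Sc\cap\ell)\setminus Q'$, which is nonempty since $s\ge 3>|\ell\cap Q'|$. Because $s-1\le\tfrac{m}{2}\le m-2$, there is $D'\in|\Oc_{\PP^2}(m-2)|$ cutting out on $\ell$ a divisor of degree $m-2$ that contains the $s-1$ points of $(\Sc\cap\ell)\setminus\{p\}$ and not $p$. Then $Q'\cup D'\in|\Oc_{\PP^2}(m)|$ contains $\Sc'\cup\big((\Sc\cap\ell)\setminus\{p\}\big)=\Sc\setminus\{p\}$ but misses $p$, contradicting $\cb(m)$. Hence $s\ge 3$ does not occur, and we are left with $s=2$: no three points of $\Sc$ are collinear.

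The general-position case $s=2$ is where the real content lies, and I would handle it by the analogous residuation with conics. Choose a conic $Q$ meeting $\Sc$ in a maximal number of points; the same argument (appending $Q$ to a test curve) shows $\Sc\setminus Q$ satisfies $\cb(m-2)$. Since $\Sc\setminus Q$ inherits the no-three-collinear property, it is not collinear once it has at least three points; Lemma~\ref{BCDline} then gives $|\Sc\setminus Q|\ge 2m-2$ unless $\Sc\subset Q$, so every conic carries at most $\tfrac{m}{2}+3$ points of $\Sc$. One then rules out the remaining configurations by covering $\Sc\setminus\{p\}$, for a suitable $p\in\Sc$, by a union of at most $\lfloor m/2\rfloor$ conics (together with one line when $m$ is odd) all avoiding $p$, using that any five points of $\Sc$, being in general position, lie on a conic; the count works out exactly because $r-1\le\tfrac{5}{2}m$, with the last few uncovered points handled by conics through four or fewer points, which have enough freedom to avoid $p$. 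The crux — and the main obstacle throughout — is precisely this: guaranteeing that each auxiliary curve can be chosen to miss the omitted point $p$ while still passing through all the others, i.e.\ that $p$ is not forced into the base locus of the relevant linear subseries. This is why one works only with unions of lines and conics, whose base loci are transparent, and it is likely that the $s=2$ case must be incorporated into the induction as a slightly stronger statement for point sets with no three collinear points.
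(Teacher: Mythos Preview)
Your argument is solid through the $s\ge 3$ case, but the $s=2$ case (no three points collinear) is genuinely incomplete, as you yourself acknowledge. The proposed covering of $\Sc\setminus\{p\}$ by $\lfloor m/2\rfloor$ conics avoiding $p$ is not justified: five points in general position determine a \emph{unique} conic, which may well pass through $p$, and switching to conics through four or fewer points makes the degree count fail (you would need roughly $(r-1)/4$ conics, of total degree exceeding $m$). One can show that a good five-subset exists whenever at least $m/2+3$ uncovered points of $\Sc$ remain --- since then those points together with $p$ cannot all lie on a single conic --- but the endgame, when fewer points remain, does not close.

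The fix is already in your hands and needs no strengthened hypothesis. You observed that $\Sc\setminus Q$ satisfies $\cb(m-2)$, and since $t\ge 5$ you have $|\Sc\setminus Q|\le (5/2)m-4=(5/2)(m-2)+1$. So apply the inductive hypothesis for $m-2$ (phrase the induction as strong induction): if $\Sc\setminus Q$ is nonempty it lies on a curve of degree $\le 2$, necessarily an irreducible conic $Q''$ since no three of its points are collinear. But $Q''$ then carries $|\Sc\setminus Q|\ge 2m-2$ points of $\Sc$, contradicting your maximality bound $t\le m/2+3$ once $m\ge 4$; and for $m=3$ the inequalities $|\Sc\setminus Q|\le 3$ and $|\Sc\setminus Q|\ge 2m-2=4$ already force $\Sc\subset Q$. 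This is close in spirit to the paper's proof, which residuates along both lines and conics from the outset, obtains a containing curve of degree $\le 4$, and then prunes it down to degree $\le 2$ by a case analysis on components. Your organization via the maximal line is cleaner in the $s\ge 3$ case (the direct violation of $\cb(m)$ via $Q'\cup D'$ is nice), but the $s=2$ case must be closed by conic-residuation-plus-induction, not by a direct construction.
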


\begin{proof}
We proceed by induction. First we need to take care of all cases when $m\le 3$.

When $m=1$, then $r\le 3$, so there is a conic containing all points in $\Sc$. When $m=2$, then $r\le 6$. There must be a conic $C$ through 5 of the points in $\Sc$, and because $\Sc$ satisfies $\cb(2)$ we know $\Sc\subset C$.

Let $m=3$ and first assume there is a line containing $\rho\ge 3$ points. The remaining $r-\rho$ points satisfy $\cb(2)$ and thus lie on a line by Lemma 1.8, so $\Sc$ is contained in the union of 2 lines. Now assume no 3 points lie on a line, and take a conic $C$ which contains $\rho$ points where $\rho\ge 5$. Thus there are $r-\rho\le 3$ remaining points. These points satisfy $\cb(1)$. Thus by Lemma 1.8, we can conclude that $\rho=8$ and thus all the points of $\Sc$ must lie on the conic $C$.

Proceeding by induction, let
$$
C_1\text{ be either } 
\left\{\begin{array}{l}
\text{1. a line in $\PP^2$ such that $\# C_1\cap \Sc =\rho \ge 3$, or }\\
\text{2. a conic in $\PP^2$ such that $\# C_1\cap \Sc =\rho \ge 5$.}
\end{array}\right.
$$
In case 1, the remaining $r-\rho$ points satisfy $\cb(m-1)$, and
$$
r-\rho \le (5/2)(m-1)+1.
$$
In case 2, the remaining $r-\rho$ points satisfy $\cb(m-2)$ and
$$
r-\rho \le (5/2)(m-2)+1.
$$
In either case, by induction, there is a curve $D\subset \PP^2$ containing $\Sc$ which is the union of lines and conics and satisfies $\deg(D)\le 4$. Moreover, if $C_1$ is a line then $\deg(D)\le 3.$

Now assume that $D$ contains a line $C_1\subset D$ and $C_1$ contains a point in $\Sc$ which is not contained in any other component of $D$. Then the points on $C_1$ which don't lie on another component of $D$ satisfy $\cb(m-3)$, thus as $m\ge 4$ by Lemma 1.8 there are at least 3 points on $C_1$. Thus by the previous paragraph we can assume that $\deg(D)\le 3$. So we are in the situation where all the points are on a line $C_1$ and a conic $C_2$ and $D=C_1\cup C_2$. Suppose there are a total of $\rho'$ points which do not lie on $C_1$. These points satisfy $\cb(m-1)$. If $\rho' \le 2(m-1)+1=2m-1$ then we are done by Lemma 1.8. So assume for contradiction that
$$
\rho' \ge 2m.
$$
Returning to the points on $C_1$ we see there are at most $r-\rho'$ points on $C_1$ not contained in $C_2$. These points satisfy $\cb(m-2)$, therefore by Lemma 1.8
$$
m\le r-\rho'.
$$
However, the previous inequality implies
$$
r-\rho' \le r-2m \le (5/2) m+1-2m \le  m/2 +1.
$$
The right hand side is less than $m$ (for $m\ge 4$), which gives the contradiction.

Now we can assume that $\Sc \subset D = C_1 \cup C_2$ is the union of two smooth conics. Assume without loss of generality that $C_1$ contains at least half (but not all) of the points in $\Sc$. If $\rho$ is the number of points in $\Sc$ which are not contained in $C_1$, then
$$
\rho\le r/2 \le (5/4)m+1/2.
$$
Moreover, these $\rho$ points satisfy $\cb(m-2)$, and when $m\ge 2$ we know that
$$
\rho\le (5/4)m+1/2\le 2(m-2)+1=2m-1.
$$
Therefore, by Lemma 1.8 we know there are at least 4 points on a line so we are done by the first case.
\end{proof}

\begin{proof}[Proof of Theorem 1.9]
By Lemma 1.10, we can assume that the points $\Sc$ are not contained in a plane. Again we plan to proceed by induction, and we need to start by checking the cases $m=1$ and 2 (which we leave to the reader).

Let $\rho$ be the maximum number of points in $\Sc$ which are contained in a plane. Then we can assume $\rho\ge 3$. The remaining $r-\rho$ points satisfy CB(m-1), and we have the inequality
$$
r-\rho \le (5/2)(m-1)+1.
$$
So by induction the remaining points lie on a plane conic or on a pair of skew lines.

In the case when the remaining points lie on a plane conic, by the definition of $\rho$ we know that $r-\rho \le \rho$, and thus
$$
r-\rho \le r/2 \le (5/4) m + 1/2.
$$
As $m\ge 3$ we have that
$$
(5/4) m + 1/2 \le 2(m-1) + 1
$$
and by Lemma 1.8 we have that all the $r-\rho$ points lie on a line $C_1$. By Lemma 1.8 there are at least $m+1$ points of $\Sc$ on $C_1$. Therefore, there are at most $r-m-1$ points not on $C_1$ and these points satisfy CB($m-1$). Combining our inequalities we have
$$
r-m-1 \le (5/2)m+1-m-1 \le (3/2)m \le 2(m-1)+1.
$$
Thus by Lemma 1.8 the points which are not contained on $C_1$ are contained in another line $C_2$, which proves the first case.

The last case to take care of is when $m\ge 3$ and the remaining $r-\rho$ points lie on two skew lines $C_1$ and $C_2$. Suppose there are $\rho'$ points on $C_1$ and $\rho''$ points on $C_2$ (and thus $r-\rho = \rho' + \rho''$). In this case the $\rho'$ points on $C_1$ and the $\rho''$ points on $C_2$ both satisfy CB(m-2). Thus by Lemma 1.8 we have, $\rho',\rho'' \ge m$. Moreover, by the definition of $\rho$ it is clear that $\rho\ge \max\{\rho',\rho''\}\ge m$ (points on a line lie on a plane). Therefore, we have
$$
3m\le \rho+\rho'+\rho'' = r \le (5/2)m+1
$$
which is a contradiction.
\end{proof}

\begin{remark}
In the setting of Theorem 1.9, if the set $\Sc$ lies on the union of two lines, it is easy to show that each line contains at least $m+1$ points.
\end{remark}


\section{The degree of irrationality of a hypersurface in a quadric}

Let  $\Quad\subset \PP^{n+2}$ be a smooth $(n+1)$-dimensional quadric in projective space. The aim of this section is to prove Theorem A, that is if $X\in |\Oc_\Quad(d)|$ is a very general hypersurface with $n\ge 1$ and $d\ge 2n$, then $\irr(X) = d$.

\begin{example}
When $n=1$, i.e. $X \subset \Quad \subset \PP^3$ is a curve in a smooth quadric in $\PP^3$, then we know $\Quad\cong \PP^1 \times \PP^1$. Projection onto either factor
$$
\phi_0\cl X \ra \PP^1
$$
gives a degree $d$ map to $\PP^1$. By adjunction $\omega_X = \Oc_X(d-2)$ is $(d-2)$-very ample. By Theorem 1.5
$$
d\ge \irr(X) = \gon(X) \ge d-2+2=d.
$$
In higher dimensions, this can be generalized as it is always possible to project from a line $\ell\subset \Quad$
\end{example}

\begin{proposition}
If $X\in |\Oc_\Quad(d)|$ is any hypersurface, then there exists
$$
\phi_0\cl X \dra \PP^n
$$
such that $\deg(\phi_0) = d$. Therefore $\irr(X) \le d$.
\end{proposition}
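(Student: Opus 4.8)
The plan is to produce $\phi_0$ as the restriction to $X$ of a linear projection from a line contained in $\Quad$, exactly as hinted at in Example 2.1. First I would choose a line $\ell\subset\Quad$ with $\ell\not\subset X$; such a line exists because the lines lying on a smooth quadric of dimension $\ge 2$ sweep it out, so if every line on $\Quad$ were contained in $X$ we would get $\Quad\subseteq X$, contradicting $d\ge 1$. Let $\pi_\ell\colon\PP^{n+2}\dra\PP^n$ be the projection from $\ell$ and set $\phi_0:=\pi_\ell|_X$.

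For $y\in\PP^n$ let $\Lambda_y:=\overline{\pi_\ell^{-1}(y)}$ be the corresponding $2$-plane through $\ell$. A short computation with the quadratic form defining $\Quad$, using that $\Quad$ is smooth, shows that for general $y$ the plane $\Lambda_y$ is not contained in $\Quad$ and meets it in a union $\ell\cup\ell'_y$ of two distinct lines; moreover $\ell'_y\not\subset X$ for general $y$, since otherwise the lines $\ell'_y$ would sweep out all of $\Quad$ inside $X$. Consequently, for general $y$,
$$
\phi_0^{-1}(y)=(X\cap\Lambda_y)\setminus\ell=(X\cap\ell'_y)\setminus\ell,
$$
a finite subscheme of the line $\ell'_y$ of length $\le d$. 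In particular $\phi_0$ is dominant and generically finite of degree $\le d$, which already yields $\irr(X)\le d$.

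To get the more precise statement $\deg(\phi_0)=d$, I would observe that for general $y$ the base point $\ell\cap\ell'_y$ of the projection lies outside $X$ (as $y$ varies these points trace out $\ell$, while $X\cap\ell$ is finite), so $\phi_0^{-1}(y)=X\cap\ell'_y$ is a length-$d$ divisor on $\ell'_y$; since we work over $\CC$ and $\phi_0$ is dominant and generically finite, generic smoothness forces the general fiber to be reduced, hence to consist of exactly $d$ points. The only step that is not entirely formal is the generic finiteness of $\phi_0$ — i.e., that $X$ is not ruled by the pencil-of-planes family $\{\ell'_y\}_{y\in\PP^n}$ — but this is immediate, since those lines already fill up $\Quad\supsetneq X$. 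For the stated bound $\irr(X)\le d$ by itself, even that is not needed: it drops straight out of $\#\phi_0^{-1}(y)\le\deg(X\cap\ell'_y)=d$.
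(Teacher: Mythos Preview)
Your argument is correct and follows the same approach as the paper: restrict to $X$ the linear projection from a line $\ell\subset\Quad$. The only difference is presentational---the paper computes $\deg(\phi_0)$ as $\mathrm{length}(\Lambda_y\cap X)-\mathrm{length}(\ell\cap X)=2d-d=d$, whereas you unpack this by explicitly identifying the residual line $\ell'_y$ in $\Lambda_y\cap\Quad$ and arguing that the general fiber is $X\cap\ell'_y$ with the base point $\ell\cap\ell'_y$ avoided.
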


\begin{proof}
Choose a line $\ell \subset \Quad$ which meets $X$ properly. Let $p_\ell \cl \PP^{n+2}\dra \PP^n$ denote the linear projection from $\ell$ and set $\phi_0 = p_\ell|_X$. The closure of each fiber of $p_\ell$ is a plane $\PP^2 \subset \PP^{n+2}$ containing $\ell$. Thus, for a general such plane we can compute
$$
\begin{array}{rcl}
\deg(\phi_0)&=&\left(\text{length of }\PP^2\cap X\right)-\left(\text{length of }\PP^2\cap X \text{ supported on }\Bs(\phi_0)\right)\\
&=&\mathrm{length}(\PP^2\cap X) - \mathrm{length}(\ell\cap X)=2d-d=d
\end{array}
$$
\end{proof}

In the case of \cite[Thm. C]{ELU}, those authors prove that if $X\subset\PP^{n+1}$ is a very general hypersurface with $d\ge 2n+2$, then any degree $d-1$ map is given by projection from a point up to post composition with a birational automorphism of $\PP^n$. Such a uniform description is not possible for quadrics in all dimensions. Already in Example 2.1 we see that there are two possible projections $X\ra \PP^1$. So maps computing the gonality are not unique already when $n=1$.

\begin{example}[Another map realizing $\irr(X)$ when $n$ is odd.] Let $X\subset \Quad\subset \PP^{n+2}$ be as above and assume that $n=2k-1$ is odd. There exist non-intersecting linear subspaces of dimension $k$:
$$
\PP(V),\PP(W)\subset \Quad.
$$
These are \textit{maximal isotropic subspaces} which are in the same family. As they do not intersect we may write $\PP^{n+2} = \PP(V\oplus W).$

The rational map
\begin{center}
$p_{V,W} \cl \PP(V\oplus W) \dra \PP(V)\times \PP(W).$\\
$[v\oplus w]\mapsto [v]\times [w]$.
\end{center}
\noindent maps $\Quad$ onto a rational divisor $B\subset \PP(V\times W)$ of type $(1,1)$, and contracts lines in $\Quad$ of the form
$$
\ell = \{ [sv\oplus tw]| [s:t]\in \PP^1\}.
$$
The restriction $\phi_1 =p_{V,W}|_X$
has degree $d$ if $V$ and $W$ are chosen generally.
\end{example}

In order to reach a contradiction and prove Theorem A we assume that there exists a map
$$
\phi\cl X \dra \PP^n
$$
with $\delta=\deg(\phi)<d$. First, we note that all fibers of $\phi$ must lie on a line $\ell \subset \Quad$.

\begin{lemma}
If $d\ge 2n$ and $(d,n)\ne (2,1)$ then a general fiber of $\phi$ lies on a line $\ell\subset \PP^{n+2}$ which is contained in $\Quad.$
\end{lemma}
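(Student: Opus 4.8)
The plan is to apply the Cayley--Bacharach machinery of Section~1 directly to a general fiber of $\phi$. First I would record the adjunction computation: since $\Quad\subset\PP^{n+2}$ is a smooth quadric of dimension $n+1$, one has $\omega_\Quad=(\omega_{\PP^{n+2}}\otimes\Oc(2))|_\Quad=\Oc_\Quad(-n-1)$, and hence for a smooth $X\in|\Oc_\Quad(d)|$,
$$
\omega_X=\Oc_X(d-n-1).
$$
Writing $m:=d-n-1$, the variety $X\subset\PP^{n+2}$ is thus a smooth subvariety of projective space with $\omega_X=\Oc_X(m)$, so Proposition~\ref{BCD}(2) applies: a general fiber $\Sc=\phi^{-1}(y)$, which consists of $\delta$ distinct points of $X\subset\PP^{n+2}$, satisfies $\cb(m)$.

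Next I would feed this into Lemma~\ref{BCDline}. Since $\delta<d$ and $d\ge 2n$,
$$
\#\Sc=\delta\le d-1\le 2(d-n-1)+1=2m+1,
$$
so Lemma~\ref{BCDline} produces a line $\ell\subset\PP^{n+2}$ with $\Sc\subset\ell$. (The hypothesis ``$n\ge 2$'' in Lemma~\ref{BCDline} refers to the dimension of the ambient projective space, here $\PP^{n+2}$, which has dimension $\ge 3$, so it is automatic.)

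It then remains to upgrade $\ell\subset\PP^{n+2}$ to $\ell\subset\Quad$. Lemma~\ref{BCDline} also gives $\#\Sc\ge m+2=d-n+1$. When $n\ge 2$ the hypothesis $d\ge 2n\ge n+2$ forces $\#\Sc\ge 3$; when $n=1$ the excluded pair $(d,n)=(2,1)$ together with $d\ge 2n$ leaves only $d\ge 3$, so again $\#\Sc\ge d\ge 3$ (in fact for $n=1$ this already gives $\delta=\#\Sc\ge d$, contradicting $\delta<d$, so there is nothing to prove). In every remaining case $\ell$ contains at least three points of $\Sc\subset X\subset\Quad$, and since $\Quad$ is cut out by a quadratic form, a line meeting it in three or more points lies on it; hence $\ell\subset\Quad$, as desired.

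I do not expect a genuine obstacle: the whole argument is a clean bookkeeping exercise with Lemma~\ref{BCDline} once the adjunction formula $\omega_X=\Oc_X(d-n-1)$ is in hand. The only points requiring care are the boundary arithmetic --- the inequality $d\ge 2n$ is precisely what is needed to keep $\delta\le 2m+1$ --- and the degenerate case $(d,n)=(2,1)$, where $m=0$, Lemma~\ref{BCDline} only guarantees two points on $\ell$, and the conclusion $\ell\subset\Quad$ genuinely fails; this is exactly why that pair is excluded from the statement.
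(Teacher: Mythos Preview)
Your proof is correct and follows essentially the same approach as the paper's: adjunction gives $\omega_X=\Oc_X(d-n-1)$, Proposition~\ref{BCD} puts a general fiber in $\cb(d-n-1)$, the inequality $\delta\le d-1\le 2(d-n-1)+1$ feeds into Lemma~\ref{BCDline} to produce the line, and then the lower bound $\delta\ge d-n+1$ (which you extract from Lemma~\ref{BCDline}, while the paper cites Theorem~1.5 --- same bound either way) forces at least three points on $\ell$, whence $\ell\subset\Quad$ by B\'ezout. Your treatment of the boundary case $n=1$ is slightly more explicit than the paper's, but the argument is otherwise identical.
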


\begin{proof}
By adjunction $\omega_X = \Oc_X(d-n-1)$. The assumption that $d\ge 2n$ implies that $\delta \le 2(d-n-1) +1.$ Thus as a general fiber of $\phi$ satisfies Cayley-Bacharach with respect to $|\omega_X|$ by Lemma 1.8 a general fiber of $\phi$ must lie on a line $\ell\subset\PP^{n+2}$. And assuming $(d,n)\ne (2,1)$, by Theorem 1.5 at least 3 points lie on the line, so as a consequence of Bezout's theorem we have $\ell\subset \Quad$.
\end{proof}

By the previous lemma, a general point $y\in\PP^n$ parameterizes a line $\ell_y\subset \Quad$ (the span of the fiber $\phi^{-1}(y)$). This induces a rational map $\PP^n \dra \Grtq$, where $\Grtq$ is the Fano variety of lines contained in $\Quad$ (the \textit{orthogonal Grassmannian}). Resolving the map gives
$$
f\cl B\ra \Grtq
$$
where $B$ is a smooth and rational projective variety. The map $f$ gives rise to the following fundamental diagram whose terms are defined below:

\begin{equation}
\begin{tikzcd}
X\arrow[d,hook]& X'\arrow[l]\arrow[d,hook]\\
\Quad & F\arrow[l,"\pi"]\arrow[d,"\psi"]\\
& B\arrow[r,"f"] & \Grtq.
\end{tikzcd}
\end{equation}

\noindent Here $\psi\cl F \ra B$ is the $\PP^1$-bundle defined as the pullback of the natural $\PP^1$-bundle over $\Grtq$. Thus $F$ comes with a natural projection $\pi\cl F \ra \Quad$. The fact that $X$ is not uniruled implies $\pi$ is generically finite. To define $X'$ consider the rational map:
$$
\id_X \times \varphi\cl X \dra X \times B\subset \Quad \times B
$$
which is the graph of the rational map $\varphi$. The image of $\id_X\times \varphi$ is contained in $F$. Set
$$
X' := \overline{\mathrm{Image}(\id_X \times \varphi)},
$$
i.e. let $X'$ be the closure of the image of the graph of $\varphi$.

\begin{lemma}\label{quadric lemma}
If $d\ge 2n$ and $(d,n)\ne (2,1)$ then in then the map $\pi$ in (1) is birational. In particular $f$ determines a ``congruence of order one" on $\Quad$.
\end{lemma}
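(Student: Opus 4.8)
The plan is to show that the generically finite map $\pi\cl F\ra\Quad$ has degree one. Since the fibre of $\pi$ over a general point $q\in\Quad$ is exactly the set of congruence lines $\ell_b$ ($b\in B$) passing through $q$, degree one is precisely the assertion that the congruence has order one. Throughout I would assume $\delta=\irr(X)$, replacing $\phi$ by a lower-degree map otherwise. Two facts feed the argument: first, $\omega_X=\Oc_X(d-n-1)$ is $(d-n-1)$-very ample, so Theorem 1.5 gives $\delta\ge d-n+1$, and with $d\ge 2n$ this yields $2\delta>d$ (equivalently $d-\delta<\delta$); second, a very general $X\in|\Oc_\Quad(d)|$ contains no line, since lines on $\Quad$ form a $(2n-1)$-dimensional family while containing a fixed line imposes $d+1>2n-1$ conditions.

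First I would analyze the divisor $\pi^*X$ on $F$. Write $\pi^*X=mX'+R$ with $m\ge 1$ (as $X'\subseteq\pi^{-1}(X)$) and $R$ effective. The map $\pi$ restricts to an isomorphism from a general fibre of $\psi$ onto the corresponding congruence line $\ell\subset\Quad$, so on such a fibre $\pi^*X$ has degree $\ell\cdot X=d$ by B\'ezout, while $X'$ has degree $\deg(\varphi)=\delta$ (here $\varphi\cl X\dra B$ is the map factoring $\phi$, so $\deg\varphi=\deg\phi$). Since $2\delta>d$ this forces $m=1$, and it shows $R$ meets a general $\psi$-fibre in $d-\delta\ge 1$ points. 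In particular $R\ne 0$, and because $X$ contains no line no component of $R$ is a union of $\psi$-fibres, so every component of $R$ dominates $B$. Now set $e=\deg(\pi)$; the projection formula gives $\pi_*\pi^*X=e[X]$ and $\pi_*X'=[X]$, hence $\pi_*R=(e-1)[X]$.

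Suppose for contradiction that $e\ge 2$. Then $\pi_*R\ne 0$, so $R$ has a component $R_0$ that is not contracted by $\pi$; since $\pi_*R$ is supported on $X$, in fact $\pi|_{R_0}\cl R_0\ra X$ is dominant and generically finite. Composing $\psi|_{R_0}\cl R_0\ra B$ with the birational morphism $B\ra\PP^n$ produces a dominant generically finite map $R_0\ra\PP^n$ of degree at most $d-\delta$ (the number of points of $R$ on a general $\psi$-fibre). Geometrically, $R_0$ records, for a general $q\in X$, the congruence lines through $q$ other than the span $\ell_{\varphi(q)}$ of its own fibre, on each of which $q$ is a \emph{residual} point. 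When $\pi|_{R_0}$ is birational onto $X$ this is immediately contradictory: it gives a rational map $X\dra\PP^n$ of degree $\le d-\delta<\delta=\irr(X)$. In particular $e=2$ is impossible, since then $\pi_*R=[X]$ forces the unique non-contracted component of $R$ to map birationally onto $X$.

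The case that remains — every non-contracted component $R_0$ of $R$ is a multiple cover of $X$, which by the above requires $e\ge 3$ — is the main obstacle. I expect to rule it out using that $X$ is very general, following the strategy of \cite[Prop. 3.8]{ELU}: spread the diagram (1) over the parameter space of hypersurfaces in $|\Oc_\Quad(d)|$ so that $\phi$, the resolved congruence $f$, and the residual divisor $R$ all deform with $X$, and run a monodromy/specialization argument to show that a non-contracted component of $R$ cannot persist for the very general member. Granting this, $e=1$, so $\pi$ is birational and $f$ is a congruence of order one.
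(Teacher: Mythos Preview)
Your setup is sound: writing $\pi^*X = X' + R$, computing intersection numbers on a $\psi$-fibre to get $X'\cdot[\ell]=\delta$ and $R\cdot[\ell]=d-\delta$, and using $2\delta>d$ to force the multiplicity of $X'$ to be $1$ --- all of this matches the standard approach. Your treatment of $e=2$ is also fine.

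The genuine gap is exactly where you flag it: the case in which the non-contracted component $R_0$ of $R$ covers $X$ with degree $\ge 2$. Your proposed fix --- spreading out and invoking monodromy in the style of \cite[Prop.~3.8]{ELU} --- is both unnecessary and not convincing as stated. That proposition controls the covering gonality of \emph{subvarieties} of a very general hypersurface; it says nothing about whether a correspondence $R_0\to X$ can have degree $\ge 2$, and there is no evident way to adapt it. More importantly, no very-general hypothesis is needed here at all.

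The argument the paper cites (essentially \cite[Thm.~4.3]{BastCortDe}) bypasses the issue entirely by applying Proposition~\ref{BCD}(1) directly to $R_0$. You have already checked that any non-$\pi$-contracted component $R_0$ of $R$ must dominate $B$ (otherwise $R_0$ is a union of $\psi$-fibres and $X$ would be uniruled). So $R_0$ maps dominantly to both $X$ and $B\bir\PP^n$, and its image $\Gamma_0\subset X\times\PP^n$ is an $n$-dimensional reduced correspondence dominating $\PP^n$. By Proposition~\ref{BCD}(1), a general fibre of $\Gamma_0\to\PP^n$ satisfies $\cb(d-n-1)$, hence by Lemma~\ref{BCDline} has at least $d-n+1$ points. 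But that fibre lies in $(\ell_y\cap X)\setminus\phi^{-1}(y)$, which has at most $d-\delta$ points. Thus $d-n+1\le d-\delta$, i.e.\ $\delta\le n-1$, contradicting $\delta\ge d-n+1\ge n+1$. Hence $e=1$.

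Note that this argument never uses $\delta=\irr(X)$, nor that $X$ contains no line (only that $X$ is not uniruled), so it proves the lemma in the generality stated.
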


\begin{proof}
The proof of the above lemma is essentially the same as ~\cite[Thm. 4.3]{BastCortDe} and we refer the interested reader to ~\cite[Lemma 4.10]{Thesis}.
\end{proof}

\begin{proof}[Proof of Theorem A]
The following proof follows the proof of ~\cite[Thm. C]{ELU}, and we include it for completeness. Assume for contradiction that there exists a dominant rational map
$$
\phi \cl X \dra \PP^n
$$
with $\delta = \deg(\phi) < d.$ By Theorem 1.5 we know $\delta \ge d-n+1$, and thus by our assumption on $\delta$ we can assume $n\ge 2$. Now consider the divisor
$$
\pi^*X = X' + \sum a_i E_i.
$$
By Lemma 5, the map $\pi$ is birational so we conclude that $\pi_* E_i = 0$. As the $E_i$ are effective divisors, for a fiber $\ell$ of $\psi$ we must have $E_i\cdot [\ell] \ge 0$. We also know
\begin{center}
$X' \cdot [\ell] =\deg(\phi) = \delta$ and $\pi^*X \cdot [\ell] = d.$
\end{center}
The lower bound on $\delta$ implies there exists $E=E_i$ with
$$
0 < c = \deg(\psi|_E) = E\cdot [\ell] \le d-\delta \le n-1.
$$

Now the image $\pi(E)\subset X$ satisfies $e:= \dim(\pi(E))\ge 1$ as every point in $\Quad$ is connected to $\pi(E)$ by a line inside $\Quad$, and the dimension of lines through a single point is $n-1$. Thus the image $\pi(E)$ has covering gonality $\le n-1$, and then by \cite[Prop. 3.7]{Thesis} we have
\begin{equation}
c\ge e+d-2n+1.
\end{equation}
There is another inequality relating $e$ and $c$ which arises from understanding the contribution of $E$ to the effective divisor $K_{F/\Quad}$. As shown in ~\cite[Cor. A.6]{ELU} we have
$$
\ord_E K_{F/\Quad} \ge n-e.
$$
Moreover,
$$
-2= K_F\cdot [\ell] = (K_{F/\Quad}+\pi^*K_\Quad)\cdot[\ell]=(K_{F/\Quad}\cdot [\ell]) -n-1.
$$
Thus
\begin{equation}
n-1 = K_{F/\Quad}\cdot [\ell] \ge \ord_E(K_{F/\Quad})E\cdot [\ell]\ge (n-e)c.
\end{equation}
Now combining equations (2) and (3) as in ~\cite[Pf. of Thm. C]{ELU} or ~\cite[Pf. of Thm. 4.1]{Thesis} contradicts the assumption that $d\ge 2n,$ which completes the proof.
\end{proof}


\section{The degree of irrationality of hypersurfaces in cubic threefolds and fourfolds}

Let $X_d \subset Z \subset \mathbb{P}^{n+2}$ be a complete intersection of type $(3,d)$ in a cubic hypersurface $Z$. 
In this section, we prove Theorem B, which calculates the degree of irrationality of $X_d$ for $n = 2,3$. Our proof depends on Theorem \ref{deg2curve}, which describes the geometry of the fibers of low degree rational maps $X_d \dashrightarrow \mathbb{P}^n$, as well as known theorems about the geometry of Fano varieties of cubic threefolds and cubic fourfolds.

First we give upper and lower bounds on the degree of irrationality of $X$.

\begin{lemma}\label{Thm B lem}
Let $X = X_d\subset Z$ be a smooth divisor in a smooth cubic hypersurface with $X\in |\Oc_Z(d)|$. If $n=2$ or $3$ and $d\ge 5n-2$ then
$$
2(d-n)+2 \leq \irr(X) \leq 2d.
$$
\end{lemma}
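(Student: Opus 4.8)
I would prove the two inequalities separately: the upper bound by an explicit projection from a line on $Z$ (mirroring Proposition~2.2), and the lower bound by the Cayley--Bacharach/congruence strategy used for Theorem~A, adapted to the cubic.

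\textbf{Upper bound.} Since $Z$ is a smooth cubic it contains lines, and the lines on $Z$ cover $Z$; as $X\subsetneq Z$, a general line $\ell\subset Z$ passes through a general point of $Z$ and so is not contained in $X$. Let $\phi_0=p_\ell|_X$, where $p_\ell\cl\PP^{n+2}\dra\PP^n$ is linear projection from $\ell$. The closure of a general fiber of $p_\ell$ is a plane $\Pi\supset\ell$, and $\Pi\cap Z=\ell\cup C$ with $C$ a plane conic (the residual of $\ell$ in the cubic curve $\Pi\cap Z$). Hence $\deg(\phi_0)=\mathrm{length}(\Pi\cap X)-\mathrm{length}(\ell\cap X)=3d-d=2d$, so $\irr(X)\le 2d$.

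\textbf{Lower bound.} By adjunction $\omega_Z=\Oc_Z(-n)$ and $\omega_X=\Oc_X(d-n)$; set $m=d-n\ge 1$, so $\omega_X$ is ample and $X$ is not uniruled. Suppose for contradiction $\phi\cl X\dra\PP^n$ is dominant with $\delta=\deg(\phi)\le 2(d-n)+1=2m+1$. Since $\Oc_X(m)$ is $m$-very ample, \cite[Thm.~1.10]{ELU} gives $\delta\ge m+2$. By \cite[Prop.~2.3]{BastCortDe} a general fiber $\Sc=\phi^{-1}(y)$ satisfies $\cb(m)$, and since $\#\Sc=\delta\le 2m+1$, \cite[Lem.~2.4]{BastCortDe} puts $\Sc$ on a line $\ell_y\subset\PP^{n+2}$. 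Because $\#\Sc\ge m+2>3$, Bézout forces $\ell_y\subset Z$; because $X$ is not uniruled, $\ell_y\not\subset X$, hence $\mathrm{length}(\ell_y\cap X)=d$ and $m+2\le\delta\le d$. Now, exactly as in Section~2, a general $y$ determines $\ell_y\in F(Z)$, and resolving $\PP^n\dra F(Z)$ gives $f\cl B\ra F(Z)$ with $B$ smooth rational of dimension $n$, the pulled-back $\PP^1$-bundle $\psi\cl F\ra B$, the evaluation $\pi\cl F\ra Z$, and $X'\subset F$ the closure of the graph of the induced map $X\dra B$, with $\pi|_{X'}$ birational onto $X$. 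The lines $\ell_y$ sweep out some $V\subseteq Z$ whose closure contains $X$; since $\ell_y\not\subset X$ we cannot have $V=X$, so $\dim V=n+1$ and the $\ell_y$ form a congruence covering $Z$, i.e. $\pi$ is generically finite. For $n=2$ this is already a contradiction: $B\dra F(Z)$ is then dominant onto the Fano surface of the cubic threefold $Z$ (if its image were a curve the lines would sweep out a surface equal to $X$, forcing $\ell_y\subset X$), so $F(Z)$ would be unirational, hence rational by Castelnuovo, contradicting that it is a minimal surface of general type. For $n=3$ one runs the numerical argument from the proof of Theorem~A: using the cubic analogue of Lemma~\ref{quadric lemma} (cf. \cite[Thm.~4.3]{BastCortDe}, \cite[Lem.~4.10]{Thesis}) the congruence has order one, so $\pi$ is birational and $\pi^*X=X'+\sum a_iE_i$ with $\pi_*E_i=0$; intersecting with a general fiber $\ell$ of $\psi$ gives $X'\cdot[\ell]=\delta$, $\pi^*X\cdot[\ell]=X\cdot\ell_b=d$, and $K_{F/Z}\cdot[\ell]=-2-K_Z\cdot\ell_b=n-2$. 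Combining the covering-gonality bound of \cite[Prop.~3.7]{Thesis} on the loci $\pi(E_i)\subseteq X$ (lines on $Z$ through a point form an $(n-2)$-dimensional family) with $(n-e)\,c\le\ord_E(K_{F/Z})\,(E\cdot[\ell])\le K_{F/Z}\cdot[\ell]=n-2$ from \cite[Cor.~A.6]{ELU} contradicts $d\ge 5n-2=13$. In all cases $\phi$ cannot exist, so $\irr(X)\ge 2(d-n)+2$.

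\textbf{Main obstacle.} The easy part is the projection bound and the reduction (via \cite[Prop.~2.3]{BastCortDe}, \cite[Lem.~2.4]{BastCortDe}, Bézout, and non-uniruledness) to a congruence of lines on $Z$ covering $Z$. The hard part is the cubic analogue of Lemma~\ref{quadric lemma} — that the resulting congruence has order one, so that $\pi$ is birational and the exceptional divisors are well-behaved — together with checking that the numerical inequalities in the fourfold case genuinely force $d\le O(n)$, including disposing of the borderline value $\delta=d$ where $\pi^*X$ may have no exceptional part on the generic ruling; this last point may require invoking Theorem~\ref{deg2curve} and the geometry of $F(Z)$ for a cubic fourfold. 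The threefold case is comparatively painless because the non-rationality of the Fano surface short-circuits the entire analysis.
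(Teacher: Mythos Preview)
Your upper bound matches the paper's. For the lower bound with $n=2$, your argument (the map $\PP^2\dra F(Z)$ would make the Fano surface unirational, hence rational by Castelnuovo, contradicting that it is of general type) is correct and essentially parallel to the paper's, which instead uses that $F(Z)$ embeds in its Albanese and therefore receives no nonconstant map from a rational variety; either route gives the contradiction.

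For $n=3$, however, there is a genuine gap. Your plan is to run the full congruence-of-order-one machinery from Theorem~A, culminating in the covering-gonality bound of \cite[Prop.~3.7]{Thesis} applied to the exceptional loci $\pi(E_i)\subset X$. But that bound is a statement about subvarieties of a \emph{very general} hypersurface, and Lemma~\ref{Thm B lem} carries no such hypothesis --- it is asserted for every smooth $X\in|\Oc_Z(d)|$. So even if you grant yourself a cubic analogue of Lemma~\ref{quadric lemma} (which you rightly flag as the hard part, and which is not established anywhere in the paper for cubics), the final numerical step is unavailable. You would also need a version of the covering-gonality inequality tailored to hypersurfaces in cubics, which is not on hand.

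The paper avoids all of this with a one-line geometric input. By Beauville--Donagi, $\Fano(Z)$ for a smooth cubic fourfold is a hyperk\"ahler fourfold; the pullback of the holomorphic symplectic form along any rational map from $\PP^3$ vanishes, so the smooth locus of the image is isotropic and hence of dimension $\le 2$. Then the family of lines $\{\ell_y\}$ is at most two-dimensional, so its total space has dimension $\le 3$, forcing $X$ itself to be swept out by these lines --- impossible since $X$ is of general type. This uses nothing about $X$ beyond smoothness and the degree bound, and requires neither the order-one lemma nor any covering-gonality estimate.
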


\begin{proof}
For the upper bound, we can choose a line contained in $Z$ that meets $X$ in a zero-dimensional subscheme of length $d$. Projection from such a line yields a rational map of degree $2d$.

For the lower bound, consider a dominant rational map $
\phi\colon X \dashrightarrow \PP^n$. For the sake of contradiction, assume $\deg(\phi) \leq 2(d-n) +1.$ Since $\omega_X = \Oc_X(d-n)$, by Proposition \ref{BCD} a general fiber $\xi$ of $\phi$ satisfies $\cb(d-n)$. By Lemma \ref{BCDline}, $\xi$ lies on a line $\ell$. Moreover,
$$
\# \xi = \deg(\phi)\geq d-n+2 > 3,
$$
which implies that $\ell$ must be contained in $Z$. Thus, as in the proof of Lemma \ref{quadric lemma}, we obtain a rational map 
$$
\PP^n \dashrightarrow \Fano(Z).
$$

If $n=2,$ $\Fano(Z)$ is the so-called \textit{Fano surface}, which embeds into its Albanese (see \cite{ClemGriff}). The Albanese is an abelian fivefold and thus contains no rational curves. Thus, any such rational map is constant. This implies that a general point on the surface $X$ lies on a single line, a contradiction. 

If $n=3,$ then the Fano variety is a hyperk\"ahler manifold of dimension 4 (see \cite{BeauvilleDonagi}). The smooth locus of the image of $\PP^n$ in $\Fano(Z)$ must be Lagrangian with respect to the symplectic form. So the image has dimension $\le 2$. If this was possible, then $X$ would be covered by lines. This is a contradiction as $X$ is of general type. Thus, $\deg(\phi) \geq 2(d-n)+2$, as desired.
\end{proof}

\begin{proof}[Proof of Theorem B]

Let $\phi:X \dashrightarrow \PP^n$ be a map of minimum degree $\delta = \irr(X)$. Since $d \geq 5n -2,$
$$
\deg (\phi) \leq 2d \leq \frac{5}{2}(d-n) +1.
$$
By Theorem \ref{deg2curve}, a general fiber $\xi$ is contained in a curve $C$ of degree 2. If $C=\ell_1\cup \ell_2$ is a union of two lines, then by Remark 1.11 each line contains at least $d-n+1\ge 4$ points. Thus both lines are contained in $Z$. Likewise, if $C$ is a smooth plane conic, then $C\cap Z$ contains at least $\#\xi \ge 2d> 6$ points. So again, $C\subset Z$.

First assume $n=2$, and that a general fiber $\xi$ is contained in the union of two lines $C=\ell_1\cup \ell_2\subset Z$. This gives a rational map
$$
\PP^2 \dra \Sym^2(\Fano(Z)).
$$
Since $\Fano(Z)$ embeds into its Albanese, the above rational map yields the following commutative diagram:
\begin{equation}
\begin{tikzcd}
\PP^2 \arrow[d, dashed] \arrow[rrd, bend left=9
, dashed]&& \\
\Sym^2(\Fano(Z)) \arrow[r,hook] &\Sym^2(\Alb(\Fano(Z)))\arrow[swap,r,"\Sigma"]& \Alb(\Fano(Z)).
\end{tikzcd}
\end{equation}
As $\PP^2$ is rationally connected, the image of $\PP^2$ in $\Sym^2(\Alb(\Fano(Z)))$ must be contained in a fiber of $\Sigma$. A fiber of $\Sigma$ is the Kummer variety $K$ of $\Alb(\Fano(Z))$. By \cite[Thm. 1]{Pirola1}, the rational curves on $K$ are rigid, so the closure of the image of $\PP^2$ in $K$ is either a point or a rational curve. Both cases are impossible, the first for dimension reasons. The second case would imply that $X$ is contained in a rational surface, which is impossible as $X$ is a surface of general type.

Therefore, a general fiber $\xi$ of $\phi$ is contained in a smooth plane conic $C\subset Z$. If $\pi = \pi_\xi$ is the plane spanned by $C$, then $\pi\cap Z=C\cup \ell_\xi$, where $\ell_\xi$ is the residual line to $C$ contained in $Z$. Again this determines a rational map
\begin{center}
$\PP^2\dra \Fano(Z)$ by $y \mapsto [\ell_\xi]$ (where $\xi=\phi^{-1}(y)$).
\end{center}
As above, this map must be constant. So all the conics are residual to the same line $\ell_\xi\subset Z$. Thus the map $\phi$ is given by projection from this line up to post composition with a Cremona transformation, and 
$$
\irr(X)= 
\left\{\begin{array}{l}
2d-2 \text{ if $X$ contains a line,}\\
2d \text{ otherwise.} \end{array}\right.
$$

Now assume $n=3$, and for contradiction assume $\delta<2d$. Let $\xi=\phi^{-1}(y)$ be a general fiber which is contained in a degree 2 curve $C\subset Z$. As $\xi$ is general, no component of $C$ is contained in $X$ (because $X$ is of general type). Thus the intersection $C\cap X$ is a 0-dimensional scheme of length $2d$, of which $\delta$ points are accounted for. For a general point $y\in \PP^3$, we can associate to $y$ the \textit{residual} effective 0-cycle $\zeta_y := [C\cap X]-[\xi]$ which has degree $e=2d-\delta$. By Lemma 3.1, $e\le 4$.

We claim that the cycle $\zeta_y$ is not a constant cycle. First, note that the degree 2 curves $C\subset Z$ must sweep out all of $Z$, because they sweep out some uniruled subvariety of $Z$ which contains the general type threefold $X$.

Consider the case when $C=\ell_1\cup \ell_2$ is the union of two lines. A simple argument using transitivity of the monodromy action for the map $\phi$ implies that both lines must have the same number of residual points. As $Z$ is not a cone, for any fixed point $z\in Z$ (and thus for any finite set of points) a general point of $Z$ cannot be connected to $z$ via a line $\ell\subset Z$. Thus $\zeta_y$ cannot be a constant cycle.

In the case $C$ is a smooth plane conic, suppose for contradiction that there is a point $z\in X$ which is contained in $\zeta_y$ for all general $y\in \PP^3$. As in the $n=2$ case, the conic determines a residual line defined by $\ell_y\cup C=\pi\cap Z\subset Z$ where $\pi$ is the plane spanned by $C$. If $z\in \ell_y$ for general $y\in \PP^3$ then the plane spanned by $C$ is contained in the tangent hyperplane to $Z$ at $z$. This means the conics in the family do not sweep out all of $Z$, a contradiction. So we can assume that for a general point $y\in \PP^3$, the residual line $\ell_y$ does not contain $z$. But then the point $z\in X$ and the line $\ell_y$ span the plane $\pi$, and thus the conic $C$ is determined by $\ell_y$ and $z$. This implies that the rational map
$$
\PP^3 \dra \Fano(Z) \text{ which sends } y\mapsto [\ell_y]
$$
is birational onto its image. This is a contradiction as the image must be Lagrangian (see the proof of Lemma 3.1). Therefore, the cycle $\zeta_y$ is not constant.

Let $\lambda\subset \PP^3$ be a line through a general point in $\PP^3$ such that the closure of
$$
\bigcup_{y\in\lambda} \zeta_y\subset X
$$
is positive dimensional. Define the incidence correspondence:
$$
D=\overline{\{(a,y)\in X\times \PP^3 | a\in \zeta_y, y\in \lambda\}}\subset X\times \PP^3.
$$
Then there is a 1-dimensional component $D_0\subset D$ such that neither of the projections $D_0\ra X$ or $D_0 \ra \lambda$ are constant. The projection to $\lambda$ shows that $\gon(D_0) \le \deg(\zeta_y) \le 4$. Thus image of $D_0$ in $X$ is a curve $E$ with gonality $\le 4$. As we assumed $X$ is very general (in particular it is contained in a very general hypersurface in $\PP^5$) by \cite[Propn. 3.8]{ELU},
$$
\gon(E) \ge d-2\cdot4+1=d-7\ge 6,
$$
which is a contradiction.
\end{proof}


\section{Hypersurfaces in the complete intersection of two quadrics in \texorpdfstring{$\PP^5$}{TEXT}}

Let $Z=Q_1 \cap Q_2 \subset \PP^5$ be a smooth intersection of two quadrics and let
$$
X = X_d \subset Z
$$
be a smooth surface in the linear series $X\in |\Oc_Z(d)|$. The goal of this section is to prove Theorem C. That is if $d\ge 8$ then
$$
\begin{array}{c}
\irr(X)= 
\left\{\begin{array}{l}
2d-2 \text{ if $X$ contains a plane conic,}\\
2d-1 \text{ if $X$ contains a line and no conics,}\\
2d \text{ otherwise.}\end{array}\right.
\end{array}
$$
Moreover, we will prove that any map:
$$
\phi\colon X \dra \PP^2
$$
of degree $\le 2d$ is given by projection from a plane in $\PP^4$.

To start, we recall some classical results about the projective geometry of a smooth (2,2) complete intersection Fano threefold. For every such threefold $Z$ there is an associated genus 2 hyperelliptic curve, $C_Z$ which can be defined by an equation given as follows. Let $M_1$ and $M_2$ be the symmetric matrices corresponding to the quadratic forms determined by $Q_1$ and $Q_2$ respectively. Then $C_Z$ is the hyperelliptic curve defined as the compactification of the affine curve:
$$
\left(y^2=\det(M_1 + tM_2)\right) \subset \CC^2.
$$
In particular, the branch points of the hyperelliptic map
$$
h_Z\cl C_Z \xrightarrow{2:1} \PP^1
$$
correspond to singular quadrics $Q_t\in |H^0(I_Z(2))|\cong \PP^1$.

\begin{remark}
The assumption that $Z$ is smooth implies that for all $Q_t\in |I_Z(2)|$, $Q_t$ has at worst isolated singularities. I.e. for all $t$ the matrix
$$
M_t = M_1 + tM_2
$$
has rank $\ge 5$. Moreover, smoothness of $Z$ implies $C_Z$ is smooth.
\end{remark}

Another way to define $C_Z$ is to look at the incidence variety
$$
\inc_Z = \left\{ (P,t) \middle| P\subset Q_t\text{ is a 2-plane in the quadric } Q_t \in|H^0(I_Z(2))|\right\}\subset \mathrm{Gr}(3,6)\times \PP^1.
$$
Then $C_Z$ can be defined as the Stein factorization of the projection to $t\in \PP^1$:
$$
\begin{tikzcd}
&C_Z\arrow[dr,"h_Z"]&\\
\inc_Z \arrow[ur] \arrow[rr] && \PP^1.
\end{tikzcd}
$$

\begin{remark}
The fiber of $\inc_Z \ra \PP^1$ over $t\in \PP^1$ is the Fano variety of planes in $Q_t$, denoted $\Fano(2,Q_t)$. There are two possibilities for $\Fano(2,Q_t)$:
\begin{enumerate}
\item $Q_t$ is smooth, and $\Fano(2,Q_t)\cong \PP^3 \sqcup \PP^3$, or
\item $Q_t$ has an isolated singularity, and $\Fano(2,Q_t) \cong \PP^3$.
\end{enumerate}
\end{remark}

Historically, people have been interested in relating various aspects of the projective geometry of $Z$ to the geometry of the curve $C_Z$. For our purposes the most important result is the following theorem due to Narasimhan and Ramanan.

\begin{theorem}[{\cite[Thm. 5]{NaRaman}}]
Let $Z\subset \PP^5$ be a smooth complete intersection of two quadrics. Let $\Fano(Z)$ be the associated Fano variety of lines in $Z$. Then
$$
\Fano(Z) \cong \Jac(C_Z).
$$
\end{theorem}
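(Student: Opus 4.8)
The plan is to show $\Fano(Z)$ is a smooth surface, realize it as the intermediate Jacobian $J^3(Z)$ through the Abel--Jacobi map on the family of lines, and then identify $J^3(Z)$ with $\Jac(C_Z)$ via the pencil of quadrics. First I would check that $\Fano(Z)$ is a smooth projective surface by deformation theory. Since $Z$ is a Fano threefold of index $2$ (one has $K_Z=\Oc_Z(-2)$), any line $\ell\subset Z$ has $N_{\ell/Z}$ of rank $2$ and degree $(-K_Z\cdot\ell)-2=0$ on $\ell\cong\PP^1$; moreover $N_{\ell/Z}\hookrightarrow N_{\ell/\PP^5}=\Oc_{\PP^1}(1)^{\oplus 4}$, so every line subbundle of $N_{\ell/Z}$ has degree $\le 1$, which forces $N_{\ell/Z}\cong\Oc_{\PP^1}\oplus\Oc_{\PP^1}$ or $\Oc_{\PP^1}(1)\oplus\Oc_{\PP^1}(-1)$. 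In either case $h^0(N_{\ell/Z})=2$ and $h^1(N_{\ell/Z})=0$, so $\Fano(Z)\subset\mathrm{Gr}(2,6)$ is smooth of pure dimension $2$; nonemptiness and connectedness follow from the usual incidence-variety argument.

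Next I would bring in the intermediate Jacobian. Because $h^{2,1}(Z)=2$, the intermediate Jacobian $J^3(Z)$ is an abelian surface. Fixing a line $\ell_0\subset Z$, the Abel--Jacobi map
$$
\alpha\colon \Fano(Z)\longrightarrow J^3(Z),\qquad \ell\longmapsto \mathrm{AJ}(\ell-\ell_0),
$$
is a morphism of smooth projective surfaces, and I would argue that it is a closed embedding: it is unramified, because the differential $H^0(N_{\ell/Z})\to T_0 J^3(Z)\cong H^{1,2}(Z)$ is injective for every line $\ell$, and it is injective on points, because no two distinct lines on $Z$ are rationally equivalent as $1$-cycles (a Torelli-type statement). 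Since $\Fano(Z)$ and $J^3(Z)$ both have dimension $2$ and $J^3(Z)$ is irreducible, the embedding $\alpha$ must be an isomorphism; in particular $\Fano(Z)$ is itself an abelian surface.

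It then remains to identify $J^3(Z)\cong\Jac(C_Z)$. Here I would exploit the pencil of quadrics $\{Q_t\}_{t\in\PP^1}$: over the locus where $Q_t$ is smooth, its two rulings of $2$-planes form precisely the double cover $h_Z\colon C_Z\to\PP^1$, branched at the $6$ points where $M_t$ drops rank, and a vanishing-cycle analysis of this degeneration identifies the polarized Hodge structure $H^3(Z,\ZZ)$ with $H^1(C_Z,\ZZ)$, hence $J^3(Z)\cong\Jac(C_Z)$. Equivalently, one may project $Z$ from a $2$-plane $\Pi$ contained in one of the quadrics $Q_{t_0}\in|I_Z(2)|$; after blowing up the conic $\Pi\cap Z$ this realizes $Z$ as a conic bundle over $\PP^2$, whose discriminant curve carries a natural étale double cover with associated Prym variety $J^3(Z)$, and the classical theory of such Pryms returns $\Jac(C_Z)$. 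Combining the two steps gives $\Fano(Z)\cong\Jac(C_Z)$.

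The hard part is the middle paragraph: the Torelli-type input that $\alpha$ is unramified and injective on points (so that $\Fano(Z)$ embeds in, and hence equals, $J^3(Z)$), together with the Hodge-theoretic identification $J^3(Z)\cong\Jac(C_Z)$. These are exactly the ingredients supplied by the classical analysis in \cite{NaRaman} (the cubic-threefold analogue of the first ingredient is the content of \cite{ClemGriff}). One should also flag the minor point that the statement asserts only an isomorphism of varieties, since a priori $\Fano(Z)$ is a torsor under $\Jac(C_Z)$ and becomes the group $\Jac(C_Z)$ itself only once the base line $\ell_0$ is chosen.
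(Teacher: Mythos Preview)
The paper does not prove this theorem at all: it is quoted verbatim as a classical result of Narasimhan and Ramanan, with citation \cite[Thm.~5]{NaRaman}, and is then used as a black box in the proof of Lemma~4.6. So there is no ``paper's own proof'' to compare against.

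Your sketch is a reasonable outline of one modern route to the result, but it is worth noting that it is \emph{not} the route taken in \cite{NaRaman}. Narasimhan and Ramanan work in the opposite direction: they start from the genus~$2$ curve $C$, construct the moduli space $\mathcal{M}$ of rank~$2$ stable bundles with fixed odd determinant on $C$, show $\mathcal{M}\cong Z_{(2,2)}\subset\PP^5$ via theta maps, and then identify the lines on this model of $\mathcal{M}$ with Hecke modifications, which are parametrized by $\Jac(C)$. No intermediate Jacobian appears. Your approach is closer in spirit to Reid's thesis or to the Clemens--Griffiths/Beauville framework; it buys a uniform Hodge-theoretic picture, at the cost of needing the injectivity of the Abel--Jacobi map as input.

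On that point, your middle paragraph is where the real content hides, and you are honest about this. The assertion that $\alpha$ is injective on points is not merely the statement that distinct lines are rationally inequivalent (which would only give injectivity of the map to $\mathrm{CH}_1(Z)$); you need injectivity after passing to the intermediate Jacobian, which is genuinely a theorem and not a formality. Likewise, the identification $J^3(Z)\cong\Jac(C_Z)$ via the conic-bundle/Prym description is correct in outline but is itself a substantial computation. So your proposal is best read as a correct high-level roadmap with the two hard inputs clearly flagged, rather than as a self-contained proof --- which is appropriate, since the paper itself treats the result as imported.
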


Now we proceed to prove Theorem C.

\begin{proposition}
Let $X\in |\Oc_Z(d)|$ be a smooth hypersurface. There exists a rational map:
$$
\phi_0\colon X \dra \PP^2
$$
such that $\deg(\phi_0) =2d$. In particular, $\irr(X) \le 2d$.
\end{proposition}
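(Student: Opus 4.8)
The plan is to construct $\phi_0$ as the restriction to $X$ of a linear projection
$$
p_P\colon \PP^5 \dra \PP^2
$$
away from a $2$-plane $P\subset\PP^5$, chosen so that $P$ is contained in one of the quadrics of the pencil $|I_Z(2)|$. This is the direct analogue of the construction in Proposition 2.2 (the quadric case), where one projects from a line lying inside the ambient quadric. The point is that such a $P$ meets $Z$, and hence $X$, along a curve of degree $2$ rather than being disjoint from $X$, and this excess is exactly what cuts the degree of $p_P|_X$ down from $\deg_{\PP^5}(X)=4d$ to $2d$.

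Concretely, I would fix a general $t\in\PP^1$, so that $Q_t$ is a smooth quadric fourfold, and choose a general $2$-plane $P$ in the Fano variety $\Fano(2,Q_t)$ of $2$-planes contained in $Q_t$; by the remarks above this variety is nonempty (it is $\PP^3$, or a disjoint union of two copies of $\PP^3$, since $Q_t$ has rank $\ge 5$). Let $Q'$ be another member of the pencil, so $Z=Q_t\cap Q'$. The $2$-planes contained in $Q_t$ sweep out all of $Q_t$ (a rank $\ge 5$ quadric fourfold is covered by the $2$-planes it contains), so the curves $C_P:=P\cap Q'$ sweep out $Q_t\cap Q'=Z$. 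In particular, for general $P$ the plane $P$ is not contained in $Q'$, so $C_P = P\cap Z$ is a genuine conic (a curve of degree $2$ spanning $P$), and, since the $C_P$ dominate $Z$ while $X\subsetneq Z$, for general $P$ the conic $C_P$ is not contained in the surface $X$. Hence
$$
P\cap X \;=\; C_P\cap X
$$
is a $0$-dimensional scheme of length $\deg(C_P)\cdot d = 2d$.

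Finally, set $\phi_0:=p_P|_X\colon X\dra\PP^2$. Its indeterminacy locus is $X\cap P=C_P\cap X$, of length $2d$, while the closure of a general fibre of $p_P$ is a $3$-plane $\Lambda\supset P$ that meets $X$ in a reduced scheme of length $\deg_{\PP^5}(X)=\deg(Z)\cdot d = 4d$. Since $P\subset\Lambda$, subtracting the fixed contribution of $P$ yields, exactly as in the proof of Proposition 2.2,
$$
\deg(\phi_0) \;=\; \mathrm{length}(\Lambda\cap X)-\mathrm{length}(P\cap X) \;=\; 4d-2d \;=\; 2d,
$$
so $\irr(X)\le 2d$. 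The only real content is recognizing the correct center of projection — a $2$-plane inside a quadric of the pencil $|I_Z(2)|$ (as suggested by the statement of Theorem C) — together with the observation that the residual conics $C_P$ dominate $Z$, so that a general $P$ meets $X$ properly in $2d$ points; once this is arranged the degree count is the same bookkeeping as in the quadric case, and I anticipate no genuine obstacle.
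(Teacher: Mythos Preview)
Your proposal is correct and follows essentially the same approach as the paper: project from a general $2$-plane $P$ contained in a quadric $Q_t$ of the pencil $|I_Z(2)|$, and compute $\deg(\phi_0)=4d-\mathrm{length}(P\cap X)=4d-2d=2d$. The paper's proof is a two-line version of exactly this; your additional justification that the conics $C_P=P\cap Z$ sweep out $Z$ (so that a general $P$ meets $X$ properly in length $2d$) is a welcome elaboration of a point the paper leaves implicit.
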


\begin{proof}
Let $Q_t \in |I_Z(2)|$ be a quadric in the ideal of $Z$ and let $P\subset Q_t$ be a general plane in $Q_t$. Linear projection from $P$ gives a rational map: $\pi_P \colon \PP^5 \dra \PP^2.$ Setting $\phi_0 = \pi_P|_X$, then $\phi_0$ is dominant and $\deg(\phi_0) = 4d-\#(P \cap X) =2d.$
\end{proof}

\begin{remark}
Let $X$ be as above, and $P\subset Q_t$ a plane in a quadric $Q_t\in |I_Z(2)|$. Let $\phi_0 = \pi_P|_X$ be the restriction to $X$ of the linear projection from $P$. There are three possibilites for $\deg(\phi_0)$:
\begin{enumerate}
\item $\deg(\phi_0)=2d$ if the intersection $P\cap X$ is 0-dimensional,
\item $\deg(\phi_0)=2d-1$ if the intersection $P\cap X$ has a single 1-dimensional component which is a line, or
\item $\deg(\phi_0) = 2d-2$ if the intersection $P\cap X$ contains a plane conic.
\end{enumerate}
\end{remark}

Now we would like to prove that if
$$
\phi\colon X \dra \PP^2
$$
is a rational map with $\deg(\phi) \le 2d$, then $\phi$ is given by projection from a plane $P\subset Q_t$ for some $Q_t \in |I_Z(2)|$. We start by applying Theorem 1.9.

\begin{lemma}
Let
$$
\phi\colon X \dra \PP^2
$$
be a dominant rational map with $\delta = \deg(\phi) \le 2d$. Then $\delta \ge 2d-2$. Moreover, if $\xi = \phi^{-1}(p)$ is a general fiber of $\phi$, then $\xi$ is contained in a smooth conic $C\subset Z$.
\end{lemma}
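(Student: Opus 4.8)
The plan is to combine the Cayley--Bacharach constraints from Section~1 with the classical identification of the Fano variety of lines $\Fano(Z)$. By adjunction $\omega_X=\Oc_X(d-2)$, so by Proposition~\ref{BCD} a general fiber $\xi=\phi^{-1}(p)$ satisfies $\cb(d-2)$, and Lemma~\ref{BCDline} already gives $\delta=\#\xi\ge d$. Since $d\ge 8$, the hypothesis $\delta\le 2d$ yields the key inequality $\delta\le 2d\le \tfrac52(d-2)+1$, so Theorem~\ref{deg2curve} applies: $\xi$ lies on a curve $C=C_p$ of degree $\le 2$, i.e.\ on a line, a smooth plane conic, or a union of two lines. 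Note also that $\omega_X$ is ample, so $X$ is of general type.

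Next I would rule out the degenerate cases. If $\xi$ lay on a line $\ell$, then since $\#\xi=\delta\ge d>2$ and a line meeting $Z=Q_1\cap Q_2$ in more than two points must lie in $Z$, we get $\ell\subset Z$; letting $p$ vary produces a rational map $\PP^2\dra\Fano(Z)$, which by the theorem of Narasimhan--Ramanan recalled above is a map to the abelian surface $\Jac(C_Z)$, hence constant. Then the lines $\ell_p$ all coincide with a fixed line $\ell$, so the dense set $\bigcup_p\xi_p$ would be contained in $\ell$, forcing $\dim X\le 1$, a contradiction. If $\xi$ lay on a union of two lines $\ell_1\cup\ell_2$, then by the remark following Theorem~\ref{deg2curve} each $\ell_i$ carries at least $d-1>2$ points of $\xi$, so again $\ell_1,\ell_2\subset Z$, and we obtain a rational map $\PP^2\dra\Sym^2\Fano(Z)=\Sym^2\Jac(C_Z)$. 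Composing with the summation map $\Sigma\cl\Sym^2\Jac(C_Z)\to\Jac(C_Z)$ and using that $\PP^2$ is rationally connected, the image lands in a single fiber of $\Sigma$, a Kummer surface $K$; by \cite[Thm.~1]{Pirola1} the rational curves on $K$ are rigid, so the image of $\PP^2$ in $K$ is a point or a rational curve, and in either case $X$ would be contained in a bounded union of lines or in a uniruled surface, contradicting that $X$ is of general type. Therefore $C=C_p$ is a smooth plane conic, and since $\#\xi=\delta\ge d>4\ge\#(C\cap Z)$ whenever $C\not\subset Z$, in fact $C\subset Z$.

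For the bound $\delta\ge 2d-2$, let $P=\langle C\rangle\cong\PP^2$ be the plane spanned by the smooth conic $C$. Because the restriction $H^0(\PP^5,\Oc(d-2))\to H^0(P,\Oc_P(d-2))$ is surjective, the set $\xi\subset P$ still satisfies $\cb(d-2)$ inside $P$; restricting once more to $C$ (surjectivity of $H^0(P,\Oc_P(d-2))\to H^0(C,\Oc_C(d-2))$, since the ideal sheaf of $C$ in $P$ is $\Oc_P(-2)$) shows that $\xi$ satisfies $\cb(d-2)$ along $C$. Identifying $C\cong\PP^1$ so that $\Oc_C(1)\cong\Oc_{\PP^1}(2)$, this says precisely that the $\delta$ points $\xi\subset\PP^1$ satisfy the Cayley--Bacharach condition with respect to $|\Oc_{\PP^1}(2(d-2))|$. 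A degree $k$ divisor on $\PP^1$ cannot contain $k+1$ distinct points, while any $\le k$ points extend to one omitting a prescribed point, so a finite set on $\PP^1$ satisfies $\cb(k)$ if and only if it has at least $k+2$ points; hence $\delta=\#\xi\ge 2(d-2)+2=2d-2$.

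The main obstacle is the exclusion of the reducible conic: that step genuinely uses the identification $\Fano(Z)\cong\Jac(C_Z)$ together with Pirola's rigidity of rational curves on the Kummer surface, and one must dispose of both possibilities for the image of $\PP^2$ in $K$. By contrast, once $\xi$ is known to lie on a smooth conic the lower bound $\delta\ge 2d-2$ is a short Cayley--Bacharach computation obtained by restricting to the spanned plane and thence to $\PP^1$, and the line case only needs the absence of rational curves in an abelian variety.
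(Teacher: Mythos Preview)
Your argument is correct and follows the same overall strategy as the paper: apply Theorem~\ref{deg2curve} via $\omega_X=\Oc_X(d-2)$, then eliminate the line and two-line cases using the identification $\Fano(Z)\cong\Jac(C_Z)$, leaving a smooth conic which must lie in $Z$ by B\'ezout.

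Two minor points of comparison. First, your derivation of $\delta\ge 2d-2$ by restricting the Cayley--Bacharach condition to the plane $P$ and then to $C\cong\PP^1$ is valid, but the paper obtains the same bound more cheaply: once the line case is excluded, the contrapositive of Lemma~\ref{BCDline} (if $\xi$ does not lie on a line then $\#\xi\ge 2m+2$, with $m=d-2$) immediately gives $\delta\ge 2d-2$, without ever needing to know that $\xi$ sits on a conic. Second, for the two-line case you invoke Pirola's rigidity result, as the paper does in the cubic-threefold setting; here the paper instead observes directly that the fiber of the summation map is a (singular) Kummer K3 surface, hence not uniruled, so the image of $\PP^2$ has dimension at most one---this is a slightly lighter input, though either route reaches the same contradiction.
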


\begin{proof}
By adjunction, the canonical bundle of $X$ is $\omega_X = \Oc_X(d-2)$. Thus $\xi$ satisfies Cayley-Bacharach with respect to the linear series $|\Oc_{\PP^5}(d-2)|$. The assumption that $d\ge 8$ implies that $\delta \le 5/2(d-2)+1$. Thus by Theorem 1.9 we know that one of the following holds:
\begin{enumerate}
\item $\xi$ is contained in a line $\ell \subset \PP^5$,
\item $\xi$ is contained in a union of two lines $\ell_1\cup \ell_2\subset \PP^5$, or
\item $\xi$ is contained in a smooth plane conic $C\subset \PP^5$.
\end{enumerate}
\noindent We now prove that the first 2 cases are impossible. We follow the same argument as in the proof of Theorem B(1).

Assume for contradiction that we are in case (1), i.e. $\xi\subset \ell$. Then as $d\ge 8$ we know that $\delta \ge 6$ by Theorem 1.5. By Bezout's theorem $\ell \subset Z$. Thus, a general point in $\PP^2$ parameterizes a line in $Z$, so we get a rational map:
$$
\PP^2 \dra \Fano(Z)\cong \Jac(C_Z).
$$
This map must be constant as $\Jac(C_Z)$ contains no rational curves. Therefore, we have that every general point in $X$ is contained in a single line, a contradiction. As $F$ is not contained in a line Lemma 1.8 implies that $\delta \ge 2d-2$.

Now assume for contradiction that $\xi$ is in the union of 2 distinct lines $\ell_1$ and $\ell_2$, i.e. assume we are in case (2). Then by Remark 1.11 we have that at least $d-1$ points lie on each line, thus $\ell_1,\ell_2 \subset Z$. Then a general point $p\in \PP^2$ parameterizes a pair of lines $\ell_1\cup \ell_2$, and we get a rational map:
$$
\psi\colon \PP^2 \dra \Sym^2(\Fano(C_Z)) \cong \Sym^2(\Jac(C_Z)).
$$
The image of such a map must lie in a single fiber of the addition map:
$$
\Sigma \colon \Sym^2(\Fano(C_Z)) \ra \Jac(C_Z).
$$
The fibers of $\Sigma$ are a singular Kummer K3 surfaces. In particular, the fibers are not uniruled. Thus the image of $\psi$ has dimension at most 1. This implies $X$ is contained in a ruled surface, which is a contradiction as $X$ is a general type surface.

Finally, assume $\xi$ is contained in a smooth conic $C\subset \PP^5$. As $d\ge 8$, we have $\#(C\cap Z) \ge \delta \ge 8.$ Bezout's theorem implies $C\subset Z$.
\end{proof}

\begin{lemma}
Let $Z$ be a smooth (2,2)-complete intersection in $\PP^5$, let $C\subset Z$ a plane conic, and let $P$ be the plane spanned by $C$. Then there is a unique quadric in the pencil
$$
Q_t \in |I_Z(2)|
$$
such that $P\subset Q_t$.
\end{lemma}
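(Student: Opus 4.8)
The plan is to reduce the statement to a linear-algebra count on the pencil of quadrics through $Z$. Recall $|I_Z(2)|\cong\PP^1$, so $V:=H^0(\PP^5,I_Z(2))=\langle Q_1,Q_2\rangle$ is two-dimensional, and consider the restriction homomorphism
$$
r\colon V\longrightarrow H^0(P,\Oc_P(2)),\qquad Q\longmapsto Q|_P.
$$
A quadric $Q_t\in|I_Z(2)|$ contains $P$ exactly when $Q_t|_P=0$, that is, when $Q_t\in\ker r$, so the lemma is equivalent to the assertion $\dim\ker r=1$.

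First I would prove existence, $\dim\ker r\ge 1$. Since $C\subset Z$, every $Q\in V$ vanishes on $C$, so $r(Q)\in H^0(P,I_{C/P}(2))$. But $C$ is a conic in the plane $P$ (a degree-two hypersurface $\{f_C=0\}\subset P$), so $I_{C/P}\cong\Oc_P(-2)$ and therefore $H^0(P,I_{C/P}(2))=\CC\cdot f_C$ is one-dimensional. Thus $r$ carries the two-dimensional space $V$ into a one-dimensional space, hence $\ker r\ne 0$, and any nonzero element of $\ker r$ is a quadric in the pencil containing $P$.

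It remains to prove uniqueness, $\dim\ker r\le 1$, which amounts to $r\ne 0$, that is, to $P\not\subset Q_1\cap Q_2=Z$; this is the heart of the argument, and I expect it to be the only real obstacle. I would deduce it from the stronger claim that $Z$ contains no $2$-plane at all. By Grothendieck--Lefschetz, $\Pic(Z)=\ZZ\cdot\Oc_Z(1)$ since $Z$ is a smooth complete intersection of dimension $\ge 3$; hence a plane $P\subset Z$ would satisfy $\Oc_Z(P)\cong\Oc_Z(m)$ with $m\ge 1$, as $P$ is a nonzero effective divisor. On the other hand $Z$ is smooth with $\omega_Z=\Oc_Z(-2)$, so adjunction applied to the smooth divisor $P\cong\PP^2\subset Z$ gives
$$
\Oc_P(-3)=\omega_P=(\omega_Z\otimes\Oc_Z(P))|_P=\Oc_P(m-2),
$$
forcing $m=-1$, a contradiction. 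Therefore $P\not\subset Z$, so $r\ne 0$, $\dim\ker r=1$, and the quadric $Q_t$ in the pencil with $P\subset Q_t$ is unique.

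If one prefers not to invoke Grothendieck--Lefschetz, the absence of planes in $Z$ can be checked directly: choosing coordinates so that $P=\{x_3=x_4=x_5=0\}$, one writes $Q_j=\sum_{i\ge 3}x_iL_{ij}$ for suitable linear forms $L_{ij}$, and along $P$ the Jacobian of $(Q_1,Q_2)$ reduces to the $2\times 3$ matrix of linear forms $M=(L_{ij}|_P)$ on $P\cong\PP^2$. Since $Z$ is smooth, $M$ would have rank $2$ at every point, giving a surjection $\Oc_{\PP^2}(-1)^{3}\twoheadrightarrow\Oc_{\PP^2}^{2}$ of sheaves; its kernel is then a line bundle of determinant $\Oc_{\PP^2}(-3)$ which splits off the sequence (because $H^1(\PP^2,\Oc_{\PP^2}(-3))=0$), contradicting $h^0(\Oc_{\PP^2}(-1)^{3})=0\ne 2=h^0(\Oc_{\PP^2}(-3)\oplus\Oc_{\PP^2}^{2})$; hence $Z$ would be singular, a contradiction.
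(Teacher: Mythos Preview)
Your argument is correct and follows essentially the same route as the paper: both consider the restriction map $r\colon H^0(\PP^5,I_Z(2))\to H^0(P,I_{C/P}(2))$ from a two-dimensional source to a one-dimensional target, and conclude that $\ker r$ is one-dimensional once one knows $P\not\subset Z$. The only difference is that the paper simply asserts that a smooth $(2,2)$-complete intersection in $\PP^5$ contains no planes, whereas you supply two proofs of this fact (via Grothendieck--Lefschetz and adjunction, and via the Jacobian/bundle argument); both of your arguments are valid and make the lemma self-contained.
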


\begin{proof}
First, it is clear that there is at most one such quadric, as a smooth (2,2)-complete intersection in $\PP^5$ contains no planes. Now consider the restriction map:
$$
r\colon H^0(\PP^5,I_Z(2)) \ra H^0(P,I_C(2)).
$$
We have $H^0(\PP^5,I_Z(2))$ is 2-dimensional and $H^0(P,I_C(2))$ is 1-dimensional. The map $r$ is nonzero as $P\not\subset Z$. Thus $r$ is surjective, and the kernel of $r$ is 1-dimensional, spanned by the equation of $Q_t$.
\end{proof}

Now given a rational map $\phi\colon X \dra \PP^2$ with $\deg(\phi)\le 2d$, Lemma 4.6 and Lemma 4.7 imply that a general point $t\in \PP^2$ parameterizes a plane $P_t$ which is contained in a quadric $Q_t\in |I_Z(2)|$. All together, this gives a rational map:
$$
\PP^2 \dra \inc_Z.
$$
Note that as $C_Z$ is a smooth genus 2 curve the composition:
$$
\PP^2 \dra \inc_Z \ra C_Z
$$
must be constant. Thus there is some fixed quadric $Q_t\in|I_Z(2)|$ such that the above rational map factors as
$$
\begin{tikzcd}
\PP^2 \arrow[dr,dashed] \arrow[rr,dashed] && \inc_Z.\\
&\Fano(2,Q_t)\arrow[ur]&
\end{tikzcd}
$$

By Remark 4.2, there are two possibilities for $\Fano(2,Q_t)$. Either
\begin{enumerate}
\item $Q_t$ is smooth, and $\Fano(2,Q_t)\cong \PP^3 \sqcup \PP^3$, or
\item $Q_t$ has an isolated singularity, and $\Fano(Q_t) \cong \PP^3$.
\end{enumerate}
In either case the rational map $\PP^2\dra \Fano(2,Q_t)$ lands in a single $\PP^3$. Let $B$ be the closure of the image of $\PP^2$ in $\PP^3$, and consider the following diagram:
\begin{equation}
\begin{tikzcd}
F \arrow[d] \arrow[r,hook] &G\arrow[r,"\pi"]\arrow[d,"\psi"] &Q_t.\\
B\arrow[r,hook]&\PP^3&
\end{tikzcd}
\end{equation}
Here $G$ is the universal plane over $\PP^3$, and $F$ the family of planes over $B$, i.e. $F=B\times_{\PP^3} G$.

\begin{lemma}
Let
$$
f\colon \PP^2 \dra B\subset \PP^3
$$
be the map induced by $\phi\colon X \dra \PP^2$. Then $f$ is birational.
\end{lemma}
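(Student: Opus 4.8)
The plan is to show directly that if $f$ fails to be birational, then a general plane in $B$ would have to meet $X$ in too many points. Write $Q \in |I_Z(2)|$ for the fixed quadric established just before the lemma, so that every plane $P_t$ ($t \in \PP^2$) lies in $Q$ and $f(t) = [P_t] \in B \subset \PP^3 \subset \Fano(2,Q)$. Fix a general point $t \in \PP^2$ and set $\xi_t = \phi^{-1}(t)$; by Lemma 4.6 this is a set of $\delta \ge 2d-2$ points contained in a smooth conic $C_t \subset Z$, and $P_t$ is the plane spanned by $C_t$. Since $Z$ contains no planes (as observed in the proof of Lemma 4.7), $P_t \not\subset Z$, so the restriction of the pencil $|I_Z(2)|$ to $P_t$ is one-dimensional -- the member $Q$ restricts to zero -- whence $Z \cap P_t$ is a plane conic; as it contains the irreducible conic $C_t$, it equals $C_t$. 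Therefore $X \cap P_t = X_d \cap C_t$ is zero-dimensional of length $2d$; here I use that $C_t \not\subset X$ for general $t$, which holds because otherwise the conics $C_{\phi(x)}$ would sweep out the surface $X$, forcing it to be uniruled, contrary to $X$ being of general type.

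Now suppose $f$ is not birational. Then a general $[P] \in B$ has two distinct preimages $t_1 \neq t_2$ under $f$, which may be taken to be general points of $\PP^2$ (this covers both the case $\deg f \ge 2$ and the case that the general fiber of $f$ is positive-dimensional). From $f(t_1) = f(t_2) = [P]$ we get that $C_{t_1}$ and $C_{t_2}$ both span $P$, so by the previous paragraph $C_{t_1} = Z \cap P = C_{t_2}$, and hence $\xi_{t_1}, \xi_{t_2} \subset X \cap P$. Since $t_1 \neq t_2$ and $\phi$ is a morphism on a dense open set containing both general fibers, $\xi_{t_1}$ and $\xi_{t_2}$ are disjoint, so
$$
2\delta = \#\xi_{t_1} + \#\xi_{t_2} \le \mathrm{length}(X \cap P) = 2d.
$$
This gives $\delta \le d$, which contradicts $\delta \ge 2d-2$ since $d \ge 8$. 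Hence $f$ is birational.

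I expect the only points requiring care to be the identification $C_t = Z \cap P_t$ together with the intersection count $\mathrm{length}(X \cap P_t) = 2d$, and the genericity bookkeeping (that $C_t \not\subset X$ for general $t$, and that the two preimages $t_i$ of a general $[P] \in B$ may themselves be taken general); both are standard once $X$ is known to be of general type and $f$ has been resolved. Everything else is a short numerical comparison, and in particular the diagram (6) and the universal plane $G$ are not needed for this lemma -- they enter afterward when $\phi$ itself is reconstructed as a linear projection.
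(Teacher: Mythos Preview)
Your proof is correct and uses the same core idea as the paper: two distinct general fibers of $\phi$ landing on the same conic $C$ force $2\delta \le \mathrm{length}(C\cap X) = 2d$, contradicting $\delta \ge 2d-2$ when $d\ge 8$.

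The one genuine difference is in how you handle the possibility that $\dim B < 2$. The paper treats this separately: it first argues, via the universal family in diagram (5), that the conics parameterized by $B$ must sweep out all of $Z$ (since they cover $X$ and $X$ is not uniruled), forcing $\dim B \ge 2$; only then does it run the counting argument to rule out $\deg(f') \ge 2$. You instead fold both cases into a single argument by observing that whenever $f$ fails to be birational---whether because the general fiber is positive-dimensional or because $\deg f \ge 2$---one can find two distinct points $t_1,t_2$ in a common fiber of $f$, both lying in the good open set $U \subset \PP^2$. This is correct (in the positive-dimensional case because $U$ meets each general fiber in a dense open; in the generically-finite case because $f'(B'\setminus U')$ is a proper subvariety of the surface $B$), and it does let you bypass the diagram. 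The paper's separate dimension argument has the mild advantage of yielding the geometric statement that the conics sweep out $Z$, but for the purposes of this lemma your unified treatment is cleaner. Your identification $Z\cap P_t = C_t$ and the length count $\mathrm{length}(X\cap P_t)=2d$ are also handled correctly and match what the paper does implicitly.
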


\begin{proof}
Start by resolving the indeterminacy of $f$:
\begin{equation}
\begin{tikzcd}
 B' \arrow[d] \arrow[dr,"f'"] &\\
\PP^2\arrow[r,dashed,"f",swap]  & B.
\end{tikzcd}
\end{equation}
It suffices to show that $f'$ is birational. First we prove that $B$ is a surface, i.e. that $f'$ is generically finite. Note that
$$
\pi^{-1}Z\cap F\ra B
$$
is the family of conics in $Z$ parameterized by $B$, and has dimension $\dim(B)+1$. A general point of $X$ is contained in a conic in this family. Thus, $\pi(\pi^{-1}(Z)\cap F)$ is a subvariety of $Z$ containing the divisor $X$. Moreover $X$ cannot be a component of $\pi(\pi^{-1}(Z)\cap F)$ as $X$ is not uniruled. Therefore, $\pi(\pi^{-1}(Z)\cap F) = Z$ which by a dimension count shows $\dim(B)\ge 2$.

Now assume for contradiction that $\deg(f')\ge 2$. For every general point $x\in B$, let $P_x$ be the plane in $Q_t$ which is parameterized by the point $x$ and let $C_x = P_x \cap Z$ be the smooth conic in $Z$ parameterized by $x$. Note that as $x$ is general, $C_x$ is not contained in $X$ as $X$ is not uniruled. Thus the interesection $C_x\cap X$ is proper. If $\deg(f')\ge 2$ then there are at least two fibers of $\phi$ which are contained in $C_x \cap X$. Then we have
$$
2d=\mathrm{length}(C_x\cap X) \ge \#(C_x\cap X) \ge 2\delta \ge 2(2d-2).
$$
This contradicts the assumption that $d\ge 8$.
\end{proof}

\begin{lemma}
If $B\subset \PP^3$ has degree 1 (i.e. $B$ is a plane) then the congruence $B$ corresponds to the closure of the fibers of a projection from a plane, and thus $\phi$ is birationally equivalent to projection from a plane in $Q_t$.
\end{lemma}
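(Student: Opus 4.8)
The plan is to pin down, from the hypothesis $\deg B=1$, an explicit plane $P'\subset Q_t$ with the property that the congruence of $2$-planes $\{P_b\}_{b\in B}\subset Q_t$ is exactly the family of residual planes of the linear projection $\pi_{P'}\colon Q_t\dra \PP^2$, and then to observe that $\phi$ agrees with the restriction $\pi_{P'}|_X$ after the birational change of coordinates $f$ from the previous lemma. The first point is the real content; the second is essentially formal.

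For the first point I would use the classical description of the relevant component $\PP^3\subseteq \Fano(2,Q_t)$. If $Q_t$ is smooth then $Q_t\cong\mathrm{Gr}(2,4)$ in its Pl\"ucker embedding and, after possibly interchanging the two rulings, the $\PP^3$ containing $B$ is $\PP(\CC^4)$, the plane attached to $[\ell]\in\PP(\CC^4)$ being $P_{[\ell]}=\{V\in\mathrm{Gr}(2,4)\colon \ell\subset V\}$; if $Q_t$ has an isolated singularity it is the cone with vertex $v$ over a smooth quadric threefold $Q^3\subset\PP^4$, every plane of $Q_t$ is the cone over a line of $Q^3$, so $\Fano(2,Q_t)\cong\Fano(Q^3)\cong\PP^3$, and projecting from $v$ reduces the statement to the congruence of lines of $Q^3$ residual to a fixed line. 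In the first model, $B\subset\PP^3$ being a plane means $B=\PP(U)$ for a $3$-dimensional subspace $U\subset\CC^4$; setting $P'=\mathrm{Gr}(2,U)\subset\mathrm{Gr}(2,4)=Q_t$ one checks directly that $P'$ is a $2$-plane of $Q_t$, that $\langle P',P_b\rangle$ is a hyperplane $\PP^3\subset\PP^5$ for every $b\in B$, that $\langle P',P_b\rangle\cap Q_t=P'\cup P_b$ (a $\PP^3$ cutting the quadric $Q_t$ in a surface that contains the plane $P'$ must cut it in a pair of planes), and that $b\mapsto\langle P',P_b\rangle$ identifies $B$ with the $\PP^2$ of hyperplanes through $P'$, i.e.\ with the target of $\pi_{P'}$. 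Equivalently, the closure of the fibre of $\pi_{P'}|_{Q_t}$ over $b$ is $P'\cup P_b$, so $\pi_{P'}|_{Q_t}$ is the map sending a general point of $Q_t$ to the unique member of the congruence through it; that this map is well defined, i.e.\ that the congruence has order one, is precisely what $\deg B=1$ provides — the planes of the ruling containing a given general point of $Q_t$ form a line in $\PP^3$, which meets $B$ in $\deg B=1$ point, equivalently $\pi\colon F\to Q_t$ is birational. The singular case is handled identically after projecting from the vertex.

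For the second point, let $f\colon\PP^2\dra B$ be the birational map of the previous lemma, sending $y$ to the class of the plane $P_y=\langle\phi^{-1}(y)\rangle$ spanned by the fibre. By Lemma 4.6 a general fibre $\phi^{-1}(y)$ lies on the smooth conic $C_y=P_y\cap Z$, hence on the congruence plane $P_{f(y)}$; so a general point $x\in X$ lies on $P_{f(\phi(x))}$, and since such an $x$ lies off $P'$ (because $\dim(X\cap P')\le 1<\dim X$) and on a single member of the congruence, the first point gives $\pi_{P'}(x)=f(\phi(x))$. Hence $\pi_{P'}|_X=f\circ\phi$ as rational maps $X\dra B$, and since $f$ is birational, $\phi=f^{-1}\circ(\pi_{P'}|_X)$. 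Thus $\phi$ is birationally equivalent to the linear projection from the plane $P'\subset Q_t$, and $P'$ lies on the quadric $Q_t\in|I_Z(2)|$, which is what we wanted.

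The main obstacle is the first point: extracting from the single numerical fact $\deg B=1$ the precise geometric statement that the $P_b$ are the residual planes of a projection from a plane $P'\subset Q_t$ — in particular the identity $\langle P',P_b\rangle\cap Q_t=P'\cup P_b$ and the identification of $B$ with the target of $\pi_{P'}$ — and doing so uniformly for $Q_t$ smooth and for $Q_t$ with an isolated singularity. Once this classical quadric geometry is in hand, the passage from ``congruence of planes'' to the explicit form of $\phi$ is immediate from Lemma 4.6 and the birationality of $f$.
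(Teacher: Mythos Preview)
Your argument is correct and takes a more explicit route than the paper. The paper dispatches this lemma in two sentences: projection from a plane $P'\subset Q_t$ produces a congruence $B$ that is a plane in $\PP^3$, and since planes $P'\subset Q_t$ (in the appropriate ruling) and planes $B\subset\PP^3$ each form three-dimensional families, a parameter count shows the association $P'\mapsto B$ is surjective. You instead invert the correspondence directly: in the model $Q_t\cong\mathrm{Gr}(2,4)$, given $B=\PP(U)\subset\PP(\CC^4)$ you set $P'=\mathrm{Gr}(2,U)$ and verify by hand that $\langle P',P_b\rangle\cap Q_t=P'\cup P_b$ for each $b\in B$, so the congruence is exactly the family of residual planes of $\pi_{P'}$. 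This costs more ink but is more transparent --- it explicitly names the center of projection and makes the order-one property visible as the intersection number of the line $\PP(V)\subset\PP(\CC^4)$ (the $\alpha$-planes through a general $[V]\in Q_t$) with the plane $B$. Your second point, spelling out $\pi_{P'}|_X=f\circ\phi$ from Lemma~4.6 and the birationality of $f$, is a useful addition that the paper leaves implicit.

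One small terminological slip: $\langle P',P_b\rangle$ is a $\PP^3$ in $\PP^5$, hence of codimension two, not a hyperplane; what you actually use is that it is a fiber of the linear projection from the plane $P'$, which is correct. Nothing in the argument depends on the word ``hyperplane'', so this is harmless.
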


\begin{proof}
It is straightforward to show that the fibers of projection from a plane in $Q_t$ give rise to a plane $B\subset \PP^3$. A parameter count shows that all planes in $\PP^3$ arise this way.
\end{proof}

\begin{proof}[Proof of Theorem C]
By Lemma 4.9, what remains to show is that for any map:
$$
\phi\cl X \dra \PP^2
$$
the corresponding surface $B\subset\PP^3$ is a plane. First, if $x\in Q_t$ is a smooth point, then the fiber
$$
\pi^{-1}(x) \cong \PP^1_x
$$
maps isomorphically onto a line in $\PP^3$. Thus if $x$ is general, then
$$
\deg(B\subset \PP^3)=\#(\psi(\PP^1_x) \cap B) = \#(\PP^1_x \cap \psi^{-1}(B)) = \#(\PP^1\cap F) = \deg(\pi|_F).
$$

Therefore we want to prove that $\delta=\deg(\pi|_F)=1$. Note that the following holds
$$
\pi_*(F \cdot {\pi^{-1}(X)}) = \deg(\pi|_F) \cdot [X],
$$
so our strategy will be to understand the intersection $F\cdot {\pi^{-1}(X)}$ as a cycle.

First we claim that the intersection of these varieties is proper. As $F$ is an irreducible divisor and ${\pi^{-1}(X)}$ is also irreducible, it suffices to show that ${\pi^{-1}(X)} \not\subset F$. This follows because the map:
$$
\psi|_{{\pi^{-1}(X)}}\cl {\pi^{-1}(X)}\ra \PP^3
$$
is surjective, but $\psi(F) = B\subsetneq \PP^3$. Thus $F\cdot {\pi^{-1}(X)}$ is a positive linear combination of subvarieties supported on the intersection $F\cap {\pi^{-1}(X)}$.

Define a rational map
$$
X\dra Q_t \times \PP^3
$$
by sending a general point $x\in X$ to the pair $(x,\phi(x))$. Let $X'$ denote the closure of the image of this map. Note that $G\subset Q_t \times \PP^3$ and moreover $X' \subset (F \cap {\pi^{-1}(X)})\subset G$. In particular, this implies $X'$ is a component of $F\cap {\pi^{-1}(X)}$, and thus we have
$$
F\cdot {\pi^{-1}(X)} = a [X'] +\sum b_i [E_i]
$$
with $a, b_i \ge 0$. Now we can compute
$$
\begin{array}{rcl}
2d[B]&=&\deg(\psi\cl {\pi^{-1}(X)} \ra \PP^3)[\PP^3]\cdot B\\
&=&\psi_*({\pi^{-1}(X)})\cdot B\\
&=&\psi_*({\pi^{-1}(X)}\cdot\psi^*B)\\
&=&\psi_*(a[X'] + \sum b_i[E_i])\\
&=&a\deg(\psi|_{X'})[B]+\sum b_i \deg(\psi|_{E_i}) [B]\\
&=& \left(a\delta+\sum b_i \deg(\psi|_{E_i})\right)[B].
\end{array}
$$
I.e. we have the equality
\begin{equation}
2d=\left(a\delta+\sum b_i \deg(\psi|_{E_i})\right).
\end{equation}
On the right hand side all the terms are positive, except for possibly the $\deg(\psi|_{E_i})$ which can be 0. As $\delta \ge 2d-2$ we know that $a=1$.

Now assume there is an $E_i$ such that
$$
\deg(\pi|_{E_i}\cl E_i \ra X)\ne 0,
$$
i.e. $\pi_*(E_i)\ne 0$. This actually implies that the map
$$
\psi\cl E_i \ra B
$$
is surjective (if this were not the case, the fibers of $\psi|_{E_i}$ would be plane conics, which would imply $X$ is uniruled). But now this gives a correspondence between $X$ and $B$ and then Proposition 1.7 implies that $\deg(\psi|_{E_i})\ge d$. By the assumption $d\ge 8$, this contradicts (7). Therefore, there are no $E_i$ such that $\pi_*(E_i) \ne 0.$

Thus, we have
$$
\delta[X] = \pi_*(F\cdot {\pi^{-1}(X)})= \pi_*([X']) + \sum b_i \pi_*([E_i]) = [X] + 0,
$$
which proves $\delta = 1$.
\end{proof}


\section{Hypersurfaces in Grassmannians}

Let $k\ne 1, m-1$, and let $\Grass = \Grkm\subset \PP$ be the Pl\"ucker embedding of the Grassmannian of $k$-planes in an $m$-dimensional vector space. The aim of this section is to prove Theorem D, that is if
$$
X=X_d \subset \Grass
$$
is a very general hypersurface with $X\in|\Oc_\Grass(d)|$ and $d\ge 3m-5$ then $\irr(X) = d$.

To start we show $\irr(X) \le d$.

\begin{lemma}
If $X\in |\Oc_\Grass(d)|$ then there exists a degree $d$ map
$$
\phi_0 \cl X \dra \PP^n.
$$
\end{lemma}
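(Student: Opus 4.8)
The plan is to build $\phi_0$ from an explicit congruence of order one of lines on $\Grass$, restricted to $X$. Write $\Grass=\Grkm$ for the Grassmannian of $k$-planes in an $m$-dimensional vector space $V$; fix a general hyperplane $H\subset V$ and a general line $L\subset V$, so that $V=H\oplus L$. For a general $k$-plane $W$ the space $W\cap H$ has dimension $k-1$, the space $W+L$ has dimension $k+1$, and $(W\cap H)\subseteq W\subseteq (W+L)$, so $W$ lies on the pencil
$$
\ell_W\ :=\ \{\,W'\ :\ (W\cap H)\subseteq W'\subseteq (W+L)\,\},
$$
which is a line in the Pl\"ucker embedding. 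I would encode this line by the rational map
$$
\Psi\cl\Grass\dra\mathrm{Fl}(k-1,k;H),\qquad W\longmapsto\bigl(\,W\cap H\ \subseteq\ (W+L)\cap H\,\bigr),
$$
where $\mathrm{Fl}(k-1,k;H)$ denotes the variety of flags $(A\subseteq B)$ with $A$ a $(k-1)$-plane and $B$ a $k$-plane in $H$. (Equivalently $\Psi(W)=\bigl(W\cap H\subseteq\pi_H(W)\bigr)$, where $\pi_H\cl V\to H$ is the projection with kernel $L$, so $\Psi$ is the restriction to $\Grass$ of a linear projection.)

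The first step is to record the structural facts. The flag variety $\mathrm{Fl}(k-1,k;H)$ is a $\PP^{k-1}$-bundle over $\mathrm{Gr}(k,m-1)$, hence rational, of dimension $k(m-1-k)+(k-1)=k(m-k)-1=n$; fix a birational equivalence $\mathrm{Fl}(k-1,k;H)\bir\PP^n$. A short linear-algebra computation then shows that $\Psi$ is defined precisely away from $\{W\subseteq H\}\cup\{W\supseteq L\}$ (the two ways the displayed dimensions can jump), that it is dominant, and that for a general flag $(A\subseteq B)$, putting $B':=B\oplus L$, the fibre $\Psi^{-1}(A\subseteq B)$ is the line $\{W':A\subseteq W'\subseteq B'\}$ with exactly the two points $W'=B$ and $W'=A\oplus L$ removed. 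Thus the lines $\ell_W$ sweep out $\Grass$, with a single one through a general point.

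Next, let $\phi_0$ be the composite $X\hra\Grass\dra\mathrm{Fl}(k-1,k;H)\dra\PP^n$, the last two arrows being $\Psi$ and the birational map of the previous step. It is dominant because $\dim X=n=\dim\mathrm{Fl}(k-1,k;H)$ and its general fibre is finite. To compute $\deg\phi_0$, take a general point of $\PP^n$, corresponding to a general line $\ell=\ell_W$; its preimage under $\phi_0$ is $(\ell\cap X)\setminus\{B,\,A\oplus L\}$. Since the lines $\ell_W$ cover $\Grass$, a general $\ell$ is not contained in $X$, so $\ell\cap X$ is the zero scheme of a section of $\Oc_{\PP^1}(d)$ (because $X\in|\Oc_\Grass(d)|$ and $\ell$ is a line), i.e.\ $d$ reduced points. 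Moreover, as $\ell$ varies the deleted points $B$ and $A\oplus L$ run over the proper sub-Grassmannians $\mathrm{Gr}(k,m-1)=\{W\subseteq H\}$ and $\mathrm{Gr}(k-1,m-1)=\{W\supseteq L\}$ of $\Grass$; since these sub-Grassmannians sweep out $\Grass$ as $H$ and $L$ vary, choosing $H$ and $L$ general relative to the given $X$ guarantees that neither deleted point lies on $X$ for a general $\ell$. Hence the general fibre of $\phi_0$ consists of exactly $d$ points, so $\deg\phi_0=d$ and $\irr(X)\le d$.

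The only genuine work is the bookkeeping in the middle step: pinning down the precise fibre of $\Psi$ — the two ``bad'' points on each line of the congruence — and checking that the indeterminacy locus and the loci swept out by these points are proper sub-Grassmannians. Everything else is formal; in particular this bound needs no genericity assumption on $X$, and the standing hypothesis $k\neq 1,m-1$ is invoked only to keep these cases separate from \cite{ELU} (the construction above applies verbatim when $\Grass\cong\PP^{m-1}$).
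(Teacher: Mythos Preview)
Your construction is essentially identical to the paper's: the paper fixes a line $\lambda\subset\CC^m$ and a hyperplane $W\subset\CC^m$ and sends $[\Lambda]$ to the flag $[T(\Lambda\cap W)\subset T(\Lambda)]$ in $\Flkm$ (where $T$ is the quotient by $\lambda$), which under the identification $\CC^m/\lambda\cong H$ is exactly your map $\Psi$. Your treatment is in fact more careful than the paper's about the indeterminacy locus and the two deleted points on each fibre line; the paper simply asserts that ``an appropriate choice of $\lambda$ and $W$'' keeps $\ell\cap X$ away from the base locus.
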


\begin{proof}
To start we show there is a rational map
$$
p\cl\Grass \dra \PP^n
$$
such that every fiber of $p$ is in a line $\ell\subset \PP$ that is contained in $\Grass$. Choose a one dimensional subspace $\lambda \subset \CC^m$, an $(m-1)$-dimensional subspace $W \subset \CC^m$, and let
$$
T\cl \CC^m \ra (\CC^m/\lambda)
$$
denote the quotient map. Let
$$
\Flkm :=\left\{[U\subset V\subset (\CC^m/\lambda)] \middle| \begin{array}{c}
\text{Where $U$ and $V$ are subspaces of $(\CC^m/\lambda)$}\\
\text{of dimensions $k-1$ and $k$ respectively}
\end{array}\right\}
$$
denote the the partial flag variety of $(\CC^m/\lambda)$. Then we can define a rational map from $\Grass$ to $\Flkm$ as follows:
\begin{center}
$p = p_{\lambda,W}\cl \Grass \dra  \Flkm$.\\
\hspace{1in} $[\Lambda \subset \CC^m] \mapsto [T(\Lambda\cap W)\subset T(\Lambda) \subset (\CC^m/\lambda) ]$
\end{center}

Note that $\Flkm \bir \PP^n$, and two general points $[\Lambda\subset \CC^m]$ and $[\Lambda' \subset \CC^m]$ are in the same fiber of $p$ if and only if $\Lambda'$ satisfies $\Lambda \cap W \subset \Lambda' \subset \Lambda+\lambda.$ It is straightforward to show that the closure of all such $[\Lambda'\subset \CC^m]$ form a line in the Pl\"ucker embedding of $\Grass$. Now set $\phi_0$ equal to the composition
$$
\phi_0 = p|_X\cl X \dra \Flkm \bir \PP^n
$$
By the construction of $\phi_0$, the fiber of $\phi_0$ over a general point in $\PP^n$ is contained in a line $\ell\subset \Grass\subset \PP$. An appropriate choice of $\lambda$ and $W$ will guarantee that $\ell \cap X$ does not meet the base locus of $\phi_0$. Thus we have $\deg(\phi_0) =[\ell]\cdot [X]= d.$
\end{proof}

Now assume for contradiction that there is a dominant rational map
$$
\phi\cl X \dra \PP^n
$$
with $\deg(\phi)\le d-1$. First, we show that all fibers of $\phi$ must lie on lines contained inside $\Grass$.

\begin{lemma}
If $d\ge 3m-5$ then a general fiber of $\phi$ lies on a line $\ell \subset \PP$ which is contained in $\Grass$.
\end{lemma}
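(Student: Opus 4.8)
The plan is to follow the template of Lemma~\ref{quadric lemma} and of the corresponding step in the proof of Theorem~B: use the Cayley--Bacharach property of a general fiber to force it onto a line $\ell$ in the Pl\"ucker space $\PP$, and then use B\'ezout against the Pl\"ucker quadrics to conclude $\ell \subset \Grass$.

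First I would record the canonical bundle of $X$. The hypothesis $k \ne 1, m-1$ forces $m \ge 4$, and in its Pl\"ucker embedding $\Grass = \Grkm$ is Fano of index $m$, i.e.\ $\omega_\Grass \cong \Oc_\Grass(-m)$; this is a one-line computation with the tautological subbundle $\mathcal S$ and quotient bundle $\mathcal Q$, using $\det\mathcal Q = \det\mathcal S^\vee = \Oc_\Grass(1)$. Applying adjunction to $X \in |\Oc_\Grass(d)|$ gives $\omega_X = \Oc_X(d-m)$. Hence by Proposition~\ref{BCD}(2), a general fiber $\xi = \phi^{-1}(y)$ satisfies the Cayley--Bacharach condition $\cb(d-m)$ with respect to $|\Oc_\PP(d-m)|$; recall also that for a general $y$ the scheme $\xi$ is $\deg(\phi)$ distinct reduced points.

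Next I would feed this into Lemma~\ref{BCDline}. Since $d \ge 3m-5$ and $m \ge 4$ we have $d \ge 2m-2$, so
$$
\#\xi = \deg(\phi) \le d-1 \le 2(d-m)+1,
$$
and Lemma~\ref{BCDline} shows that all points of $\xi$ lie on a single line $\ell \subset \PP$. The same lemma gives the lower bound $\#\xi \ge (d-m)+2 \ge 2m-3 \ge 5$. Finally, $\ell$ passes through at least five distinct points of $X$, hence of $\Grass$; since $\Grass$ is cut out (scheme-theoretically, hence certainly set-theoretically) by its Pl\"ucker quadrics, and each such quadric restricts to a degree-$2$ form on $\ell \cong \PP^1$ that vanishes at more than two points, every Pl\"ucker quadric must contain $\ell$. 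Therefore $\ell$ lies in the common zero locus of the Pl\"ucker quadrics, i.e.\ $\ell \subset \Grass$, which is exactly the claim.

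I do not anticipate a genuine obstacle here: the argument is bookkeeping once the index computation $\omega_X = \Oc_X(d-m)$ is in place, and the numerical inequalities coming from $d \ge 3m-5$ (together with $m \ge 4$) leave room to spare both for applying Lemma~\ref{BCDline} and for ensuring a general fiber has more than two points so that the B\'ezout step goes through. The substantive work in the proof of Theorem~D lies in the subsequent steps, which will exploit the very generality of $X$ in the spirit of \cite[Prop. 3.8]{ELU}.
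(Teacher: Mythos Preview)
Your proposal is correct and follows essentially the same approach as the paper: compute $\omega_X = \Oc_X(d-m)$ by adjunction, invoke Cayley--Bacharach and Lemma~\ref{BCDline} to put a general fiber on a line, and then use that the Grassmannian is cut out by quadrics plus B\'ezout to force $\ell \subset \Grass$. The paper's own proof is a two-sentence sketch referring back to the quadric case (Lemma~2.4), and your write-up simply fills in the numerics; the only cosmetic difference is that the paper cites Theorem~1.5 rather than the $r \ge m+2$ clause of Lemma~\ref{BCDline} for the lower bound on the fiber size, but these yield the same inequality.
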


\begin{proof}
The proof is the same as the proof of Lemma 2.4. We just remark that $\omega_X = \Oc_X(d-m)$. So by Theorem 1.5, $\deg(\phi) \ge d-m+2$ and every fiber of $\phi$ lies on a line $\ell\subset\PP$. The Grassmannian is cut out by quadrics, so by applying Bezout's theorem we have that $\ell\subset \Grass$.
\end{proof}

Thus a general point in $\PP^n$ parameterizes a line in $\Grass$. This gives rise to a rational map from $\PP^n$ to the Fano variety of lines in $\Grass$, which is $\Flkkm$. As in \S2 we get the following diagram:

\begin{equation}
\begin{tikzcd}
X\arrow[d,hook]& X'\arrow[l]\arrow[d,hook]\\
\Grass & F\arrow[l,"\pi"]\arrow[d,"\psi"]\\
& B\arrow[r,"f"] & \Flkkm.
\end{tikzcd}
\end{equation}

\noindent Here $f\colon B\ra \Flkkm$ is a resolution of the indeterminacy of the map $\PP^n \dra \Flkkm$. The variety $F$ is the corresponding family of lines in $\Grass$ over $B$ (with it's natural projections). Finally, $X'$ is the closure of the image of the rational section $X\dra B$ which sends a point $x$ to $(x,\phi(x))$.

\begin{lemma}
The map $\pi$ is birational, i.e. the map $\phi$ determines a ``congruence of lines of order one" on $\Grass$.
\end{lemma}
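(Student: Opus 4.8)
This is the exact analogue for $\Grass$ of the ``congruence of order one'' Lemma~\ref{quadric lemma}, and the plan is to argue as in \cite[Thm.~4.3]{BastCortDe} and \cite[Lemma~4.10]{Thesis}, the only change being that the role of the orthogonal Grassmannian is now played by the Fano variety of lines $\Flkkm$. First I would check that $\pi$ is dominant and generically finite: $F$ has dimension $\dim(B)+1=n+1=\dim(\Grass)$, while the lines parametrized by $B$ sweep out a subvariety $Y=\overline{\pi(F)}\subseteq\Grass$ that is uniruled and contains $X$; since $\omega_X=\Oc_X(d-m)$ is ample (as $d\ge 3m-5>m$), $X$ is of general type and hence not uniruled, so $Y\neq X$ and therefore $Y=\Grass$. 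By construction $\pi|_{X'}\colon X'\to X$ is birational. Write $e=\deg(\pi)$ and assume, for contradiction, $e\ge 2$.

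Next comes the divisor bookkeeping on $F$. Write $\pi^{*}X=X'+\sum a_iE_i$ with all $a_i>0$ (the coefficient of $X'$ is $1$ because $\pi|_{X'}$ is birational). Intersecting with a general fibre $\ell\cong\PP^1$ of $\psi$ gives $X'\cdot[\ell]=\delta$ (the fibre $\phi^{-1}(y)$ lies on $\ell$ and spans it) and $\pi^{*}X\cdot[\ell]=X\cdot\pi_{*}[\ell]=d$ (as $\pi(\ell)$ is a line in the Pl\"ucker embedding), so $\sum a_i\,(E_i\cdot[\ell])=d-\delta$. By Theorem~1.5, $\delta\ge d-m+2$ (this inequality is shown in the proof of the preceding lemma), hence $d-\delta\le m-2$. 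Pushing $\pi^{*}X$ forward and using $\pi_{*}[X']=[X]$ gives $(e-1)[X]=\sum a_i\,\pi_{*}[E_i]$, a nonzero effective cycle; since $X$ is irreducible, some component $E=E_i$ must satisfy $\pi(E)=X$. For such $E$ the map $\psi|_E\colon E\to B$ is dominant — otherwise $E$ is a union of $\psi$-fibres, so $\pi(E)$ is swept out by lines, contradicting that $\pi(E)=X$ is of general type — hence generically finite of degree $E\cdot[\ell]\le d-\delta\le m-2$. So over a general $b\in B$ the divisor $E$ produces an effective $0$-cycle $\zeta_b:=\pi_{*}\bigl((\psi|_E)^{-1}(b)\bigr)$ on $X$, of degree at most $m-2$, lying on the congruence line $\ell_b\subset\Grass$.

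Finally, as in the proof of Theorem~B for $n=3$, I would argue that the cycles $\zeta_b$ do not form a constant cycle: a point common to all $\zeta_b$ would lie on every congruence line, but those sweep out all of $\Grass$, which is not a cone. So for a general line $\lambda$ in $B\bir\PP^n$, the closure of $\bigcup_{b\in\lambda}\zeta_b$ is a curve, and the incidence correspondence over $\lambda$ has a component $D_0$ with neither projection constant and $\gon(D_0)\le\deg(\zeta_b)\le m-2$; its image is a curve $E'\subset X$ with $\gon(E')\le m-2$, and as $\lambda$ varies these curves cover $X$. Since $X$ is very general and $d\ge 3m-5$, this contradicts the lower bound on $\covgon(X)$ coming from \cite[Prop.~3.8]{ELU}, so $e=1$ and $\pi$ is birational. (When $e=2$ the last step is elementary: the residual points give a dominant rational map $X\dra\PP^n$ of degree $d-\delta\le m-2$, contradicting $\irr(X)\ge d-m+2>m-2$.)

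The step I expect to be the main obstacle is this last one: confirming that the covering-gonality estimates of \cite[Prop.~3.8]{ELU}, developed for very general hypersurfaces in projective space, apply to a very general hypersurface section of the \emph{fixed} Grassmannian $\Grass$, and that the bound they give exceeds $m-2$ precisely when $d\ge 3m-5$ — i.e.\ that the numerology of the residual divisor $E$ matches the induction in \cite{ELU}. A minor secondary point is checking that $D_0\to E'$ is birational (so that $\gon(E')\le\gon(D_0)$), which holds for a general choice of $\lambda$.
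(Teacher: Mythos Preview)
Your opening moves match the cited argument: showing $\pi$ is dominant and generically finite, writing $\pi^{*}X=X'+\sum a_iE_i$, pushing forward to find a component $E$ with $\pi(E)=X$, and observing that $\psi|_E$ dominates $B$ with $\deg(\psi|_E)\le d-\delta\le m-2$. The divergence is in how you finish.

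The argument of \cite[Thm.~4.3]{BastCortDe} (which is all the paper invokes, spelled out for Grassmannians in \cite[Lemma~4.17]{Thesis}) does \emph{not} pass through residual cycles, lines $\lambda\subset B$, or covering-gonality estimates. It applies Proposition~\ref{BCD}(1) directly to $E$, viewed as a correspondence in $X\times\PP^n$ via $\pi|_E$ and the birational identification $B\dra\PP^n$: a general fibre $\zeta_b$ of $\psi|_E$, pushed to $X$, then satisfies $\cb(d-m)$, so by Lemma~\ref{BCDline} it has at least $d-m+2$ points. Since $\#\zeta_b\le m-2$ and $d\ge 3m-5>2m-4$, this is already a contradiction. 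Your parenthetical for $e=2$ is essentially this; the point you are missing is that Proposition~\ref{BCD}(1) is stated for arbitrary reduced $n$-dimensional correspondences, not just graphs of rational maps, so the same one-line contradiction works for every $e\ge2$ without needing $\pi|_E$ to be birational. (The identical trick appears explicitly in the proof of Theorem~C, where the paper writes ``this gives a correspondence between $X$ and $B$ and then Proposition~1.7 implies $\deg(\psi|_{E_i})\ge d$''.)

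Your alternative route is heavier and has a real gap. To conclude $\gon(E')\le m-2$ you need $D_0\to E'$ birational, but when $\deg(\pi|_E)\ge2$ there is no reason a general line $\lambda$ produces such a $D_0$; it may well be an irreducible multisection of its image, and then no bound on $\gon(E')$ follows. You also end up invoking the covering-gonality bound \cite[Prop.~3.8]{ELU} (in its Grassmannian form \cite[Prop.~3.11]{Thesis}), which is exactly the input reserved for the proof of Theorem~D \emph{after} the lemma is in hand; the lemma itself is meant to be the more elementary Cayley--Bacharach step.
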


\begin{proof}
The proof is identical to the proof of \cite[Theorem 4.3]{BastCortDe}, and the proof of this precise statement can be found in \cite[Lemma 4.17]{Thesis}.
\end{proof}

\begin{proof}[Proof of Theorem D]
Assume for contradiction that there is a map
$$
\phi\colon X \dra \PP^n,
$$
with $\deg(\phi)=\delta\le d-1$. Then we can associate to $\phi$ the fundamental diagram (8). Let
$$
\pi^*(X) = X' + \sum a_i E_i
$$
where the $E_i$ are irreducible exceptional divisors of $\pi$ and $a_i>0$. Let $\ell$ be a fiber of $\psi$. As $X \in |\Oc_\Grass(d)|$ we have
$$
d=[X]\cdot \pi_* [\ell]=\pi^*[X] \cdot [\ell] = \delta + \sum a_i [E_i]\cdot [\ell] = \delta+\sum a_i \deg(\psi|_{E_i}).
$$

By the assumption that $\delta \le d-1$, there must by some $E=E_i$ such that $\deg(\psi|_{E})\ge 1$. Set
$$
c=\deg(\psi|_E).
$$
Using that $\omega_X=\Oc_X(d-m)$, by Theorem 1.5 $\delta\ge d-m+2$, which implies
$$
1\le c\le m-2.
$$

Let $e=\dim(\pi(E))$. As $\pi$ is birational and $c\ge 1$, every point in $\Grass$ lies on a line which intersects the image of $\pi(E)$. A dimension count (\cite[Lemma 4.18]{Thesis}) shows that the union of all lines in $\Grass$ that go through a single point has dimension $m-1$. Thus we have the estimate
$$
e+m-1\ge n+1.
$$
Therefore the image $\pi(E)$ is a subvariety of $X$ which has covering gonality $c\le m-2$ and dimension $e\ge n-m+2$.

However, in \cite[Propn. 3.11]{Thesis} following ideas of \cite{VoisinClemens,EinSubvarsI} and \cite[Propn. 3.8]{ELU} it is proved that if $X\in |\Oc_\Grass(d)|$ is very general then for any subvariety of $X$ with dimension $e$ and covering gonality $c$ we have the inequality:
$$
c\ge e+d-m-n+2.
$$
Plugging in the estimates for $c$ and $e$, and using the assumption that $d\ge 3m-5$ gives
$$
m-2\ge c \ge e+d-m-n+2\ge n-m+2+3m-5-m-n+2 = m-1,
$$
which is a contradiction.
\end{proof}


\section{Hypersurfaces in products of projective space}

Let $\PP=\PP^{m_1}\times \cdots \times \PP^{m_k}$ be a product of $k\ge 2$ projective spaces, and let
$$
X=X_{(d_1,\dots,d_k)}\subset \PP
$$
be a very general hypersurface with $X\in |\Oc_\PP(d_1,\dots,d_k)|$. The goal of this section is to prove Theorem E, i.e. if
$$
\min\{d_i - m_i-1\} \ge \max\{m_i\}
$$
then $\irr(X) =\min\{d_i\}$. Throughout this section we define the following constants:
\begin{itemize}
\item $d:=\min\{d_i\}$,
\item $p:=\min\{d_i-m_i-1\}$,
\item $m:=\max\{m_i\}$, and
\item $n:=\dim(X)=m_1+\cdots+m_k-1$.
\end{itemize}
\noindent Thus the goal is to prove that if $p\ge m$ then $\irr(X) = d.$

To start we show that $\irr(X)\le d$.

\begin{lemma}
There is a degree $d$ rational map
$$
\phi_0\colon X \dra \PP^n.
$$
\end{lemma}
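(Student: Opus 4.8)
The plan is to follow the pattern of the upper bound arguments in the earlier sections — most closely the proof of Lemma 5.1: construct an explicit dominant rational map $p\cl\PP\dra\PP^n$ whose general fibre is a line contained in $\PP$ under the Segre embedding, and then restrict it to $X$.

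After relabelling the factors we may assume $d=d_1$. Fix a general point $c\in\PP^{m_1}$ and a hyperplane $H\cong\PP^{m_1-1}\subset\PP^{m_1}$ not containing $c$, and let $\rho\cl\PP^{m_1}\dra H$ be linear projection away from $c$, whose fibres are the lines of $\PP^{m_1}$ through $c$. Set
$$
p:=\rho\times\id_{\PP^{m_2}}\times\cdots\times\id_{\PP^{m_k}}\cl\PP\dra H\times\PP^{m_2}\times\cdots\times\PP^{m_k}
$$
and $\phi_0:=p|_X$. The target is a product of projective spaces of dimension $(m_1-1)+m_2+\cdots+m_k=n$, hence birational to $\PP^n$, and the general fibre of $p$ is a line $\ell\times\{q\}$ with $\ell\subset\PP^{m_1}$ a line through $c$ and $q\in\PP^{m_2}\times\cdots\times\PP^{m_k}$. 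These fibre lines sweep out a dense subset of $\PP$, so since $X\subsetneq\PP$ the map $\phi_0$ is dominant and a general fibre line is not contained in $X$; thus $\phi_0^{-1}(y)=X\cap(\ell\times\{q\})$ is zero-dimensional for general $y$.

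It then remains to count lengths. Writing $[X]=\sum_i d_i h_i$ in cohomology, where $h_i$ is the pullback of the hyperplane class from the $i$-th factor, and using $h_1\cdot[\ell\times\{q\}]=1$ and $h_i\cdot[\ell\times\{q\}]=0$ for $i>1$, the general fibre $X\cap(\ell\times\{q\})$ has length $d_1=d$. One then checks, exactly as in Lemma 5.1, that for a general choice of $c$ and of $q$ none of these points lies on $\Bs(p)=\{c\}\times\PP^{m_2}\times\cdots\times\PP^{m_k}$: the line $\ell\times\{q\}$ meets $\Bs(p)$ only in $\{c\}\times\{q\}$, and the image of the proper subvariety $X\cap(\{c\}\times\PP^{m_2}\times\cdots\times\PP^{m_k})$ in $\PP^{m_2}\times\cdots\times\PP^{m_k}$ misses a general $q$. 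Hence $\deg(\phi_0)=d$, so $\irr(X)\le d$. There is no genuine obstacle here: this is a routine upper-bound construction, and the only two points requiring a little care — identifying the class of the fibre line and its intersection with $X$, and the general-position choice of the projection centre — are handled just as in the earlier sections.
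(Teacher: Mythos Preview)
Your proposal is correct and takes essentially the same approach as the paper: project away from a general point in the factor $\PP^{m_1}$ with $d_1=d$, keep the identity on the other factors, and observe that the general fibre is a line of class meeting $[X]$ in $d_1$ points. The paper's proof is terser (it simply notes that a general choice of the centre guarantees $\{c\}\times\PP^{m_2}\times\cdots\times\PP^{m_k}\not\subset X$ and concludes), whereas you spell out the intersection-number and base-locus checks more explicitly; but the construction and the logic are identical.
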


\begin{proof}
It suffices to find a rational map of degree $d$ to any $n$-dimensional rational variety. Without loss of generality assume that $d_1=d.$ Let $x\in \PP^{m_1}$ be a general point in the first projective space. Consider the linear projection from $x$:
$$
\pi_x\colon \PP^{m_1}\dra \PP^{m_1-1}.
$$
Let $\mathrm{pr}_i$ denote the $i$th projection $\mathrm{pr}_i\colon \PP \ra \PP^{m_i}$, and consider the rational map:
$$
\pi_x \times \mathrm{pr}_2\times \cdots \times \mathrm{pr}_k \colon \PP \dra \PP^{m_1-1}\times \PP^{m_2}\times \cdots \times \PP^{m_k}\bir \PP^n.
$$
Let $\phi_0 = \pi_x \times \mathrm{pr}_2\times \cdots \times \mathrm{pr}_k|_X$. If $x$ is chosen generally then $x\times \PP^{m_2}\times \cdots \PP^{m_k} \not\subset X.$ It easily follows that $\deg(\phi_0) = d_1 = d$.
\end{proof}

Now assume for contradiction that there is a dominant rational map
$$
\phi\colon X \dra \PP^n
$$
with $\deg(\phi) =\delta< d$. Let $\PP\subset \PP^N$ be the Segre embedding of $\PP$ defined by $|\Oc_\PP(1,\dots,1)|$.

\begin{lemma}
The fibers of $\phi$ lie on lines in $\PP^N$ which are contained in $\PP.$
\end{lemma}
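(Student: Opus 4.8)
The plan is to run the same argument as in Lemma 2.4 and Lemma 5.2: use the Cayley--Bacharach condition to force a general fiber onto a line in $\PP^N$, and then use Bezout's theorem together with the fact that the Segre variety is cut out by quadrics to conclude that this line lies on $\PP$. The only new feature compared to the quadric and Grassmannian cases is that $\omega_X$ is no longer a scalar multiple of the Segre polarization $\Oc_{\PP^N}(1)$, so one first has to split off an effective divisor.

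By adjunction $\omega_X = \Oc_X(d_1-m_1-1,\dots,d_k-m_k-1)$. Put $a_i := (d_i-m_i-1)-p \ge 0$ and fix $D_0 \in |\Oc_X(a_1,\dots,a_k)|$; then $\omega_X = \Oc_{\PP^N}(p)|_X \otimes \Oc_X(D_0)$, and the map $D\mapsto D+D_0$ embeds $|\Oc_{\PP^N}(p)|_X|$ as a linear subsystem of $|\omega_X|$. Let $\xi = \phi^{-1}(y)$ be a general fiber, so that $\#\xi = \delta$, $\xi$ is reduced, and $\xi \cap D_0 = \emptyset$ (since $\phi(D_0) \subsetneq \PP^n$). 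By Proposition \ref{BCD} applied to $|\omega_X|$, $\xi$ satisfies the Cayley--Bacharach condition with respect to $|\omega_X|$. I claim this forces $\xi \subset \PP^N$ to satisfy $\cb(p)$: if some degree-$p$ hypersurface $H\subset \PP^N$ contained $\xi$ except for a single point $x_0$ and missed $x_0$, then $(H\cap X)+D_0 \in |\omega_X|$ would contain $\xi\setminus\{x_0\}$ and still miss $x_0$ (because $x_0\notin H$ and $x_0 \notin D_0$), contradicting Cayley--Bacharach for $|\omega_X|$.

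Next comes the numerics, exactly as in Sections 2 and 5. If $j$ is an index realizing $p=d_j-m_j-1$, then $m_j\le m\le p$ gives $d \le d_j = p+m_j+1 \le 2p+1$, so $\#\xi = \delta \le d-1 \le 2p$. Since $N\ge 3$, Lemma \ref{BCDline} now yields both $\delta \ge p+2$ and that $\xi$ lies on a line $\ell\subset\PP^N$; in particular, as $p\ge m\ge 1$, the line $\ell$ contains the $\ge p+2 \ge 3$ reduced points of $\xi \subset \PP$.

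Finally, the Segre variety $\PP\subset\PP^N$ is cut out by quadrics, so every quadric $Q\supseteq\PP$ contains the $\ge 3$ points of $\xi\subset\ell$ and hence, by Bezout's theorem, contains all of $\ell$; intersecting over all such $Q$ gives $\ell\subset\PP$. This proves the assertion for a general fiber of $\phi$, which is what is used to build the fundamental diagram in the sequel. I expect the only genuinely delicate point to be the descent in the second paragraph from the mixed-degree Cayley--Bacharach condition for $|\omega_X|$ to $\cb(p)$ for the uniform polarization $\Oc_{\PP^N}(p)$; everything else is a direct transcription of the proofs of Lemma 2.4 and Lemma 5.2.
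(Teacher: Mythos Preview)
Your proof is correct and follows the same route the paper sketches: Cayley--Bacharach forces the general fiber onto a line, and Bezout plus the quadric equations of the Segre variety force the line into $\PP$. The paper's one-line proof simply asserts that $\omega_X\cong\Oc_X(d_1-m_1-1,\dots,d_k-m_k-1)$ is $p$-very ample and that $\PP\subset\PP^N$ is cut out by quadrics, leaving the reader to unwind how the mixed-degree canonical series yields $\cb(p)$ in $\PP^N$; your ``split off an effective $D_0\in|\Oc_X(a_1,\dots,a_k)|$ and choose $\xi$ generic enough to miss it'' argument is exactly the clean way to make that step precise.
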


\begin{proof}
The proof is the same as the proof of Lemma 4. We just remark that
$$
\omega_X \cong \Oc_X(d_1-m_1-1,\dots,d_k-m_k-1)
$$
is $p$-very ample, and that $\PP\subset \PP^N$ is cut out by quadrics.
\end{proof}

\begin{remark}
By the projection formula, any line $\ell\subset \PP\subset \PP^N$ has a unique nonconstant projection
$$
\mathrm{pr}_i \colon \PP \ra \PP^{m_i}.
$$
Thus the Fano variety of lines in $\PP$ is a disjoint union:
$$
\Fano(\PP) = \mathrm{Gr}(2,m_1+1)\times \PP^{m_2}\times \cdots \times \PP^{m_k} \sqcup \cdots \sqcup \PP^{m_1}\times \cdots \times \PP^{m_{k-1}}\times \mathrm{Gr}(2,m_k+1).
$$
\end{remark}

Now by Lemma 6.2, the map $\phi\colon X \dra \PP^n$ induces a rational map
$$
\PP^n \dra \Fano(\PP).
$$
Assume without loss of generality that the image of $\PP^n$ is contained in $\mathrm{Gr}(2,m_1+1)\times \PP^{m_2}\times \cdots \times \PP^{m_k}$. To simplify notation set:
$$
\PP_0 := \PP^{m_2}\times \cdots\times \PP^{m_k}.
$$
Then as in \S 2 or \S 5 we arrive at the following fundamental diagram:
\begin{equation}
\begin{tikzcd}
X\arrow[d,hook]& X'\arrow[l]\arrow[d,hook]\\
\PP & F\arrow[l,"\pi"]\arrow[d,"\psi"]\\
& B\arrow[r,"f"] & \mathrm{Gr}(2,m_1+1) \times \PP_0.
\end{tikzcd}
\end{equation}

As usual $f\colon B \ra \mathrm{Gr}(2,m_1+1)\times \PP_0$ is a resolution of the rational map $\PP^n\dra \mathrm{Gr}(2,m_1+1)\times \PP_0$. The map $\psi\colon F\ra B$ is the family of lines in $\PP$ parameterized by $B$ and the map $\psi\colon F \ra \PP$ is the natural map. Finally, $X'$ is the closure of the image of the rational map
$$
\id \times \phi\colon X \dra F\subset \PP \times B,
$$
in particular $X'\ra X$ is birational.

\begin{lemma}
If $p\ge m$ then the map $\pi\colon F \ra \PP$ is birational, i.e. the map $\phi$ determines a ``congruence of lines of order one" on $\PP$.
\end{lemma}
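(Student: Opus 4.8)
The plan is to run the ``congruence of order one'' argument of \cite[Thm.~4.3]{BastCortDe} — exactly as in the proof of Lemma~\ref{quadric lemma} and its analogue in \S5 — extracting a numerical contradiction from the positivity of $\omega_X$. Assume $r:=\deg(\pi)\ge 2$. Write $\pi^*X = X' + \Delta$ with $\Delta$ effective; since $\pi|_{X'}\cl X'\ra X$ is birational while $\pi_*\pi^*X = r[X]$, we get $\pi_*\Delta = (r-1)[X]\ne 0$, so $\Delta$ has a component $E$ with $\pi(E)=X$ and $\pi|_E$ generically finite. First I would check that $\psi(E)=B$: otherwise the fibres of $\psi|_E$ would be positive-dimensional subschemes of the lines $\psi^{-1}(b)$, hence whole lines, so $\pi(E)=X$ would be swept out by lines — impossible, since by adjunction $\omega_X\cong\Oc_X(d_1-m_1-1,\dots,d_k-m_k-1)$ has every exponent $\ge p\ge 1$, so $\omega_X$ is ample and $X$ is not uniruled.

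Next restrict to a general fibre $\ell=\psi^{-1}(b)$ of $\psi$ and let $\xi=\phi^{-1}(b)$ be the associated general fibre of $\phi$. After the reduction made above, $\pi(\ell)$ is a line of $\PP$ projecting non-constantly only to the first factor, so $\pi^*X\cdot\ell=[X]\cdot\pi_*[\ell]=d_1$ and $X'\cdot\ell=\#\xi=\delta$; thus $E\cdot\ell\le\Delta\cdot\ell=d_1-\delta$. To bound $\delta$ from below, note that $\xi$ is a reduced set of $\delta$ points on $\ell$ which, by Proposition~\ref{BCD}, satisfies the Cayley--Bacharach condition with respect to $|\omega_X|=|\Oc_\PP(d_1-m_1-1,\dots,d_k-m_k-1)|$; since $\ell$ is horizontal in the first factor, $\omega_X|_\ell$ has degree $d_1-m_1-1$, and (lifting divisors on $\ell\cong\PP^1$ to divisors in $|\omega_X|$ not containing $\ell$) this Cayley--Bacharach condition descends to $\cb(d_1-m_1-1)$ on $\ell$. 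By Lemma~\ref{BCDline} this forces $\delta\ge d_1-m_1+1$, hence $\deg(\psi|_E)=E\cdot\ell\le d_1-\delta\le m_1-1$.

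Finally, $E$ defines a correspondence between $X$ and $\PP^n\bir B$: applying Proposition~\ref{BCD} to the reduced closure of the image of $E$ in $X\times\PP^n$, the general fibre of $E\ra\PP^n$ projects to a subset of $X$ satisfying the Cayley--Bacharach condition with respect to $|\omega_X|$, and since $\omega_X\cong\Oc_X(d_1-m_1-1,\dots,d_k-m_k-1)$ is $p$-very ample this subset has at least $p+2$ points, forcing $\deg(\psi|_E)\ge p+2$. Comparing with the previous step gives $p+2\le m_1-1\le m-1$, contradicting $p\ge m$; hence $r=1$. The step I expect to demand the most care is the descent of the Cayley--Bacharach condition from the multigraded system $|\omega_X|$ to a line horizontal in a single factor — one must check that $H^0(\PP,\omega_X)\ra H^0(\ell,\omega_X|_\ell)$ is surjective and lift divisors on $\ell$ back to $\PP$ without picking up $\ell$ — together with verifying that the uniruledness dichotomy ruling out ``vertical'' components of $\Delta$ is exhaustive; the remaining bookkeeping is routine, as in \cite[Thm.~4.3]{BastCortDe} and \cite{Thesis}.
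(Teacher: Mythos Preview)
Your argument is correct and is precisely the approach the paper has in mind: it is the \cite[Thm.~4.3]{BastCortDe} congruence argument, which the paper does not spell out but defers to \cite[Lem.~4.27]{Thesis}. The ``slightly more subtle'' point the paper alludes to is exactly the one you isolate --- that $\omega_X$ is not a multiple of the Segre hyperplane class, so the Cayley--Bacharach condition on a fibre $\xi$ must be descended from the multigraded series $|\Oc_\PP(d_1-m_1-1,\dots,d_k-m_k-1)|$ to $\cb(d_1-m_1-1)$ on the line $\ell$ in the first factor in order to obtain the sharp bound $\delta\ge d_1-m_1+1$ (the cruder bound $\delta\ge p+2$ would not close the numerics); your proposed lift via product sections $s_1\otimes\cdots\otimes s_k$ does this, and the remaining steps (finding $E$ with $\pi(E)=X$, ruling out vertical $E$ via non-uniruledness, and bounding $\deg(\psi|_E)$ from below by $p+2$ via Proposition~\ref{BCD}) go through as you indicate.
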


\begin{proof}
The proof is similar, but slightly more subtle than \cite[Thm. 4.3]{BastCortDe} and we refer the interested reader to \cite[Lem. 4.27]{Thesis}.
\end{proof}

\begin{proof}[Proof of Theorem E]
Assume for contradiction that there is a rational map $\phi\colon X \dra \PP^n$ with
$$
\delta:=\deg(\phi)\le d.
$$
By Lemma 6.2 the fibers of $\phi$ lie on lines inside $\PP$. Using the notation of Remark 6.3, assume without loss of generality that the lines spanned by the fibers of $\phi$ are nonconstant only along the projection to $\PP^{m_1}$.

The case $m_1=1$ is distinct from the other cases. In this case, $\mathrm{Gr}(2,m_1+1)$ is a point and the map
$$
f\colon B \ra \mathrm{Gr}(2,m_1+1) \times \PP_0\cong \PP_0
$$
is birational. It is easy to deduce that the map $\phi$ rationally factors through the projection $X\ra \PP_0$, which has degree $\ge d$, a contradiction.

Now assume that $m_1\ge 2$. Every variety in (9) admits a projection to $\PP_0$, and all of the maps in (9) commute with this projection. This allows us to base change the diagram (9) to consider fibers over a very general point $y\in \PP_0$, which gives the following diagram:

\begin{equation}
\begin{tikzcd}
X_y\arrow[d,hook]& X'_y\arrow[l]\arrow[d,hook]\\
\PP^{m_1} & F_y\arrow[l,"\pi_y"]\arrow[d,"\psi_y"]\\
& B_y\arrow[r,"f"] & \mathrm{Gr}(2,m_1+1).
\end{tikzcd}
\end{equation}

\noindent Because $y$ is general, every variety in (10) is reduced and irreducible, both $\pi_y$ and $\pi_y|_{X_y'}$ are birational maps, and the degree of $\psi|_{X_y'}$ is still $\delta$.

As $X$ was chosen to be very general, $X_y$ is a very general degree $d_1$ hypersurface in $\PP^{m_1}$ with a degree $\delta < d\le d_1$ rational map
$$
\phi_y\colon X_y \dra B_y.
$$
As $F_p$ is rational and is a $\PP^1$ bundle over $B_p$ this implies $B_p$ is rationally connected. The proof of \cite[Thm. C]{ELU} works for dominant rational maps to any rationally connected base. Thus as $\delta < d_1$ and $X_y$ is very general in $\PP^{m_1}$, then \cite[Thm. C]{ELU} implies that $\delta =d_1-1$, $\phi_y$ is projection from a point $x\in X$, and $B_p$ is actually rational.

Returning to diagram (9), the point $x\in X$ allows us to define a section of the generically finite map
$$
\pi^{-1}(X) \ra B,
$$
I.e. there is a component $E$ in $\pi^{-1}(X)$, which is different from $X'$ such that $\psi|_E\colon E\ra B$ has degree 1 (and $\pi(E)$ dominates $\PP_0$). Thus $\pi(E)$ is a uniruled subvariety of $X$ of dimension $n+1-m_1$. Finally, by \cite[Prop. 3.13 \& Prop 3.19]{Thesis} we see that
$$
d_1\ge p+3 > p
$$
which contradicts the degree assumption.
\end{proof}

\section{Open problems}

There are many possibilities for future work. We would like to pose several problems which seem like natural extensions of this paper.

First, let $Z$ be a smooth Fano threefold and let $L$ be an ample line bundle on $Z$. Assume that $L$ is \textit{sufficiently positive} in an appropriate sense.
\begin{problem}
Compute $\irr(X)$ for $X\in |L|$ any smooth surface.
\end{problem}
\noindent The results in this paper, as well as the results from \cite[Thm. 1.3]{BastCortDe}, can be used to compute the degree of irrationality of every sufficiently positive smooth surface in $\PP^3$, $\PP^2\times\PP^1$, $(\PP^1)^3$, any smooth quadric threefold, any smooth cubic threefold, or any smooth (2,2)-complete intersection threefold. In each case the degree of irrationality can be computed in terms of the low degree curves contained in $X$. A natural next step would be to compute the degree of irrationality of smooth surfaces in smooth quartic threefolds $Z\subset \PP^4$, or smooth surfaces in quartic double solids.

It is also natural to ask how the degree of irrationality behaves in families. I.e. assume that
$$
\pi\colon \mathcal{X} \ra T
$$
is a smooth family of complex varieties with relative dimension $n$. How does the function
$$
t\in T \mapsto \irr(\pi^{-1}(t))\in \ZZ
$$
behave? When $n=1$ it is well-known that the gonality of a curve is lower-semicontinuous in families. On the other hand, recently Hasset, Pirutka, and Tschinkel (\cite{HasPirTsch}) constructed a family of varieties such that $\irr(\pi^{-1}(t))$ equals 1 on a dense set (i.e. $\pi^{-1}(t)$ is rational) but is strictly greater then 1 at the very general point $t\in T$. These examples occur when $n\ge 4$. On the other hand, in dimension 2 rationality behaves well in families. We optimistically ask

\begin{question}
Let $\pi\colon\mathcal{X}\ra T$ be a smooth family of complex surfaces. Is the function $\irr(\pi^{-1}(t))$ lower-semicontinuous on $T$?
\end{question}

\noindent This is desirable from a number of perspectives. For example, it would allow us to give some naturally defined and geometrically interesting loci on the moduli of general type surfaces (higher dimensional analogues of Brill-Noether loci). One positive result in a similar direction is the recent work of Kontsevich and Tschinkel \cite{KontTsch} which proves that rationality specializes in families of smooth projective complex varieties.

Finally, we ask if there is a more general Cayley-Bacharach result. Let $\Sc$ be a set of $r$ points in projective space which satisfy the Cayley-Bacharach condition with respect to $|mH|.$

\begin{question}
If $r\le \left( \frac{d+3}{2} \right) m+1,$
is $\Sc$ contained in a degree $d$ curve in projective space?
\end{question}

\noindent Here the ratio $(d+3)/2$ should be thought of as the ratio between the number of general points that degree $d$ plane curves can interpolate (${d+2\choose 2}-1$) and the degree ($d$). Presumably, the proof of such a result would have to be less ad hoc then our proof of Theorem 1.9.

\bibliographystyle{amsalpha} 
\bibliography{hypfano} 

\end{document}